\documentclass[11 pt, reqno]{amsart}
\usepackage[foot]{amsaddr}

\usepackage{amsmath}
\usepackage{amsthm}
\usepackage[english]{babel}
\usepackage{amssymb}
\usepackage{graphics}
\usepackage{epsfig}
\usepackage{amscd}
\usepackage{amsfonts}
\usepackage{amsbsy}
\usepackage[normalem]{ulem}
\usepackage{tikz}
\usepackage{soul}
\usepackage[font=small,labelfont={bf},labelsep=period,width=1.2\textwidth]{caption}
\usepackage{float}
\usepackage{subcaption}
\usepackage{overpic}
\usepackage{dsfont}
\usepackage{tabularx}
\usepackage{array}
\usepackage{makecell}
\usepackage{threeparttable}
\newcommand{\R}{\mathds{R}}
\newcommand{\C}{\ensuremath{\mathbb{C}}}

\newtheorem {theorem} {Theorem}
\newtheorem {prop} [theorem] {Proposition}
\newtheorem {corollary}{Corollary}
\newtheorem {lemma} [theorem] {Lemma}

\newtheorem {remark} {Remark}
\newtheorem* {remark*} {Remark}

\usepackage{color}

\usepackage{fancyhdr}
\begin{document}

\author[Pablo Amster$^{1,2}$, Andr\'es Rivera$^3$, Sebasti\'an Pedersen$^1$]
{Pablo Amster$^{1,2}$, Andr\'es Rivera$^3$, Sebasti\'an Pedersen$^1$}

\title[Delay-induced dynamics in a nonlinear crime interaction model with periodic forcing]{Delay-induced dynamics in a nonlinear crime interaction model with periodic forcing}



\address{$^1$ Departamento de Matem\'atica, Facultad de Ciencias Exactas y Naturales, 
Universidad de Buenos Aires. Ciudad Universitaria Pab. I, 
(1428) Buenos Aires, Argentina.}
\address{$^2$ Imas-Conicet, Facultad de Ciencias Exactas y Naturales, 
Universidad de Buenos Aires. Ciudad Universitaria Pab. I, 
(1428) Buenos Aires, Argentina.}
\address{$^3$ Departamento de Ciencias Naturales y Matem\'aticas
Pontificia Universidad Javeriana Cali, Facultad de Ingenier\'ia y Ciencias,
Calle 18 No. 118--250 Cali, Colombia.}

\email{pamster@dm.uba.ar, amrivera@javerianacali.edu.co, spedersen@dm.uba.ar}

\subjclass[2020]{34C10, 34C25, 34C60, 34D20.}

\keywords{Social behavior modelling; periodic solutions; stability; Delay differential equations.}

\date{}
\dedicatory{}
\begin{abstract}
A nonlinear time-delay model is proposed to describe the interaction dynamics between criminal and non-criminal populations, combining social influence mechanisms, saturation effects represented by a Holling type II functional response, and time-dependent law-enforcement actions. The delay accounts for the latency between exposure to criminal behavior and behavioral response, introducing memory effects that naturally lead to a framework of delay differential equations. Fundamental analytical properties, including positivity, global existence, and invariance of the feasible region, are established to ensure the mathematical consistency of the population interpretation. In the autonomous setting, explicit threshold conditions governing the stability of the criminal-free equilibrium and the emergence of coexistence states are derived, while the delay is shown to induce stability switches and oscillatory regimes through characteristic-root crossings. In the non-autonomous case, topological degree arguments guarantee the existence of strictly positive periodic solutions, with the resulting conditions involving the average law-enforcement intensity measures rather than short-term fluctuations. These results highlight the role of time delay as a structural mechanism underlying recurrent patterns and complex temporal behavior in crime dynamics. 
\end{abstract}

\maketitle


\section{Introduction} \label{sec:intro}
In recent years, various approaches have been developed to analyze criminal behavior within a population, with the aim of developing models and/or strategies that contribute to understanding and mitigation of criminal behavior and promote safer social development. These approaches include, among others, population dynamics, population behavior theory, optimal control theory, and stochastic modeling (see \cite{SOOKNANAN2023128212} and the references therein, and see \cite{Crokidakis2024DynamicsOD, Gutema2024AMA, Kotola2022AMM, Teklu2022MathematicalMI, Sarmah2025ExploringTD, Short2008ASM, McMillon2025, Oluwasegun2022, kwofie2023curtailing, mamo2024, mohammad2017analysis, srivastav2019modeling, calatayud2025dynamical}).

In terms of mathematical modeling, one approach consists of models inspired by the classical predator-prey framework (see, for example, \cite{ ABBAS2017121,TRIPATHI2021125725} and the references therein) in which the total population is divided into two groups. Group $C$ consists of individuals who commit crimes, whereas Group $N$ consists of individuals who do not commit crimes. A key motivation for adopting compartmental models is the assumption that criminal behavior can be regarded as socially contagious through association with delinquent peers  (see, for example, references [2, 3] in \cite{SOOKNANAN2023128212}). We then adopt the usual assumptions underlying compartmental population models. 

Continuing this line of research, predator-prey models have also been used to describe interactions between criminal and non-criminal populations. In this context, nonlinear functional responses provide a natural way of incorporating saturation effects into the interaction mechanism. More broadly, the review by Sooknanan and Seemungal \cite{SOOKNANAN2023128212} shows that most epidemiological models of criminal behavior rely on mass-action or standard-incidence contact rates, with mass action being the most frequently used formulation. Time-delay effects have also been incorporated into crime-dynamics models, although with different interpretations of the delayed process. To situate the present formulation within this literature, Table \ref{tab:model comparison} summarizes the main structural features of several models closely related to the present work.
\begin{table}[ht]
\centering
\begin{threeparttable}
\caption{Main structural features of selected crime-dynamics models 
closely related to the present work.}
\label{tab:model comparison}
\renewcommand{\arraystretch}{1.25}
\begin{tabularx}{\textwidth}{
    >{\raggedright\arraybackslash}p{2.1cm}
    >{\centering\arraybackslash}X
    >{\centering\arraybackslash}X
    >{\centering\arraybackslash}X
    >{\centering\arraybackslash}X
}
\hline
\textbf{Reference}
& \makecell{\textbf{Functional}\\\textbf{response}}
& \makecell{\textbf{Population}\\\textbf{growth}}
& \textbf{Delay}
& \makecell{\textbf{Law}\\\textbf{enforcement}} \\
\hline
\cite{ABBAS2017121}
& Linear
& Logistic
& \textbf{--}
& Constant \\
\cite{srivastav2019modeling}
& Mass action
& Recruitment/death
& Catching delay\tnote{a}
& \textbf{--} \\
\cite{SOOKNANAN2023128212}\tnote{b}\cite{TRIPATHI2021125725}
& Holling type II
& Logistic
& \textbf{--}
& Constant \\
\cite{Mebratie2021CrimeDynamics}
& Mass action\tnote{c}
& Recruitment/death
& \textbf{--}
& Dynamic police force\\
\textbf{Present work}
& Holling type II
& General $f(N)$
& Behavioral delay\tnote{d}
& $T$-periodic \\
\hline
\end{tabularx}
\begin{tablenotes}
\footnotesize
\item[a] The delay represents the time associated with catching criminals.
\item[b] See reference [75].
\item[c]  Criminal transmission includes bilinear contact terms, with an additional modification accounting for media coverage. 
\item[d] The delay represents the time between exposure to criminal behavior and the subsequent transition toward criminal behavior.
\end{tablenotes}
\end{threeparttable}
\end{table}

The preceding table highlights the main structural differences between the present formulation and closely related crime-dynamics models. The proposed model integrates a general growth function $f(N)$ for the non-criminal population, a $T$-periodic law-enforcement function, a Holling type II functional response, and a discrete delay, thereby extending previous autonomous formulations to a non-autonomous delay framework. Within this setting, we investigate the dependence of local stability on the delay in the autonomous case and establish sufficient conditions for the existence of strictly positive periodic solutions when the law enforcement is time-periodic.

More precisely, we propose the following time-delay mathematical model to describe the interaction dynamics between non-criminal and criminal individuals. Here, we denote by $N=N(t)$ the non-criminal population and by $C=C(t)$ the criminal population. The proposed system of delay differential equations is given by
\begin{equation}\label{main eq 1}
\begin{split}
\dot{N}(t)&=N(t)f(N(t))-\frac{\varphi N(t)C(t)}{\nu+N(t)}+\sigma N(t)C(t),\\
\dot{C}(t)&=-\eta C(t)-l_{e}(t)C(t) +\frac{\gamma N(t-\tau)C(t-\tau)}{\nu+N(t-\tau)}.
\end{split}
\end{equation}
Let us describe the components in \eqref{main eq 1}
\begin{itemize}
\item[1.] In the absence of $C$, the non-criminal population $N$ is assumed to follow a growth function $f$, where $f$ represents the relative growth rate of the non-criminal population and is assumed to be of generalized logistic type, satisfying $f(0)>0$, $f'(s)<0$ for $s>0$, and having a unique positive zero. For example, if $f(s)=\mu(m-s) $, then $N$ follows logistic growth in the absence of $C$. The function $\displaystyle{\varphi N/(\nu+N)}$ represents the rate at which individuals in $C$ victimize individuals in $N$, and the parameter $\varphi$ represents the maximum rate at which interactions with individuals in $C$ affect individuals in $N$. The term $\sigma N C$ represents a community-based protective or resilience response of the non-criminal population arising from interactions between the non-criminal and criminal populations. Specifically, we assume that the intensity of such responses per non-criminal individual increases proportionally with the presence of criminal individuals, at rate $\sigma C$, naturally resulting in a bilinear contribution at the population level. This term should be viewed as representing informal or community-level responses such as collective vigilance, social support, preventive behavior, and other mechanisms that reinforce the persistence of non-criminal behavior. Hence, the two interaction terms in the first equation describe competing effects: criminal victimization or influence, represented by $\varphi NC/(\nu+N)$, and the compensating protective response of the non-criminal population, represented by $\sigma N C$.
\\
\item[2.]  In the absence of $N$, the parameter $\eta$ is the natural mortality rate of $C$. To prevent the rapid growth of the criminal population, law-enforcement measures or interventions are incorporated through a positive and periodic function $l_e(t)$. The last term in the second equation of \eqref{main eq 1} represents the delayed transition of exposed non-criminal individuals into the criminal population. Here, $\tau\geq0$ represents the latency period between exposure to criminal behavior and the subsequent transition into the criminal population. Therefore, $\displaystyle{\gamma N/(\nu+N)}$ represents the rate at which interactions with the non-criminal population contribute to the transition into the criminal population.
\end{itemize}

This paper is organized as follows. Section \ref{preliminar} presents preliminary results necessary for the following sections. Section \ref{sec:linear stab auton} analyzes the equilibria and local stability in the autonomous case. Section \ref{sec: non auton periodic sols} investigates the existence of periodic solutions in the non-autonomous case. Section \ref{sec: strategy control crime} discusses the implications of the analytic results in the context of crime dynamics and control. Finally, Section \ref{sec:numerical sims} presents some numerical simulations illustrating the main analytical results corresponding to Sections \ref{sec:linear stab auton} and \ref{sec: non auton periodic sols}.

\begin{remark*}[the delay as an exposed population] 
Many classic epidemiological models of $SEIR$ type include an exposed population $E$, consisting of individuals who have been in contact with an infected individual but have not yet entered the infectious class $I$ (see \cite{ bjornstad2020seirs,brauer, deutsch,diekmann}). It is well known that delays can be used to represent such a latent stage without introducing an additional compartment. (see \cite{chen2023delays, kuang, smith, murray2007mathematical, rihan2021delay, saade, rossini2025general, articleDiekmann2025}). We give a brief interpretation and analysis of how this occurs in our model \eqref{main eq 1}.
\begin{em}
    
Let $E(t)$ represent the non-criminal population that, at time $t$, has been in contact with the criminal population, that is, has been exposed to crime, but has not yet become part of the criminal population. Assume that the dynamics are initiated over the initial interval $[-\tau,0].$ Then we have that:
$$E(0) = \int\limits_{-\tau}^{0} \varphi h(s) ds = \int\limits_{-\tau}^{0} \varphi \frac{N(s)C(s)}{\nu+N(s)} ds = \int\limits_{-\tau}^{0} \varphi \frac{N_0(s)C_0(s)}{\nu+N_0(s)} ds,
$$
since, according to the first equation of system \eqref{main eq 1}, $\displaystyle{\varphi h(s) := \varphi\frac{N(s)C(s)}{\nu+N(s)}}$ models the rate at which criminals come into contact with non-criminals; and where $N_0(t)$ and $C_0(t)$, $-\tau\leq t\leq 0$, are the initial conditions for $N$ and $C$ respectively.

Now, in the time interval $[0,\tau]$, some of the individuals exposed during the initial interval will subsequently enter the criminal population at the rate $\gamma h(s-\tau)$, while new exposed individuals will be generated again at a rate of $\varphi h(s)$.  Therefore,
$$
E(\tau) = E(0) - \int\limits_{0}^{\tau} \gamma h(s-\tau) ds +  \int\limits_{0}^{\tau} \varphi h(s) ds = \int\limits_{-\tau}^{0} \left(\varphi -\gamma\right) h(s) ds +  \int\limits_{0}^{\tau} \varphi h(s) ds.
$$
Following the same reasoning, for $t\geq0$, we obtain:
$$
E(t) = \int\limits_{-\tau}^{t-\tau} \left(\varphi -\gamma\right) h(s) ds +  \int\limits_{t-\tau}^{t} \varphi h(s) ds,
$$
Differentiating this last expression (with respect to $t$), we obtain
$$
\dot{E}(t) = \varphi h(t) - \gamma h(t-\tau) = \frac{\varphi N(t)C(t)}{\nu+N(t)} - \frac{\gamma N(t-\tau)C(t-\tau)}{\nu+N(t-\tau)},
$$
which yields the corresponding equation for the exposed population. Then, the delay $\tau$ is interpreted as the time it takes for the non-criminal population, after being in contact with the criminal population, to definitively become a criminal population. Thus, system \eqref{main eq 1} can be interpreted as an $NEC$-type model, with the exposed stage represented implicitly through the delay rather by an additional state variable. 
\end{em}
\end{remark*}

\section{Preliminary results}\label{preliminar}
This section presents some preliminary results that will be used throughout the subsequent sections. Most of these results are standard, but we provide more detailed statements and proofs adapted to the needs of the present work.

\subsection{Positivity and global existence of solutions}
In the present context, it is important to establish that a nonnegative initial conditions generate nonnegative solutions $X:=(N,C)$. We prove this for system \eqref{main eq 1} subject to the initial conditions
\[
\begin{split}
N(s)&=\Phi_1(s), \quad C(s)=\Phi_2(s),\\
\Phi_1(s),\Phi_2(s)&\geq 0, \,s \in [-\tau,0], \quad \Phi_1(0),\Phi_2(0)>0, 
\end{split}
\]
where $\Phi(s):=(\Phi_{1}(s),\Phi_{2}(s))\in C([-\tau,0],\mathbb{R}^{2}_{+})$ the Banach space of continuous functions mapping $[-\tau,0]$ into $\mathbb{R}^{2}_{+}$, with the norm $\|\Phi\|=\sup_{-\tau \leq s\leq 0}\|\Phi(s)\|
$ and $\mathbb{R}^{2}_{+}=\left\{(x_{1},x_{2})\in\mathbb{R}^2\,|\,x_i\geq 0)\right\}$. Moreover, since the right-hand side of \eqref{main eq 1} is locally Lipschitz continuous, there exists a unique local solution $X=X(t,\Phi)$ for every continuous initial function $\Phi$.

First, note that we can write the equation for $N$ as 
\[
\dot{N}(t) = h(t)N(t), \quad \text{with} \quad h(t):=f(N(t))-\frac{\varphi C(t)}{\nu+N(t)}+\sigma C(t),
\]
and note that
\begin{equation}\label{positivity for N}
\begin{split}
\frac{d}{dt} \left(\text{e}^{-H(t)}N(t)\right) = \text{e}^{-H(t)}(-h(t))N(t) + \text{e}^{-H(t)}\dot{N}(t) = 0, \quad \text{where} \quad H(t) = \int_0^t h(s)ds.
\end{split}
\end{equation}
So $\text{e}^{-H(t)}N(t) = N_0$ for some constant $N_0$, and then $N(t) = N_0 \text{e}^{H(t)}$. Hence, $N(t)\geq 0$ whenever $N_0=\Phi_{1}(0)\geq 0$, and $N(t)>0$ whenever $N_0>0.$  
In the same fashion, we can write the equation for $C$ as
\[
\dot{C}(t) = a(t)C(t) + b(t), \quad \text{with} \quad a(t):=-(\eta+l_{e}(t)),\quad b(t):=\frac{\gamma N(t-\tau)C(t-\tau)}{\nu+N(t-\tau)}.
\]
Once again, notice that
\begin{equation}\label{positivity for C}
\begin{split}
\frac{d}{dt} \left(\text{e}^{-A(t)}C(t)\right) = \text{e}^{-A(t)}(-a(t))C(t) + \text{e}^{-A(t)}\dot{C}(t) = \text{e}^{-A(t)}b(t), \quad \text{where} \quad A(t)=\int_{0}^{t}a(s)\,ds.
\end{split}
\end{equation} So, equation \eqref{positivity for C} implies that
\begin{equation}\label{positivity for C bis}
\begin{split}
C(t) = C_0\text{e}^{A(t)} + \text{e}^{A(t)}\int_0^t \text{e}^{-A(s)}b(s)ds.
\end{split}
\end{equation}
It follows from \eqref{positivity for C bis} that $C(t)$ remains positive whenever $C_0>0$. Indeed, if $t_0>0$ were the first zero of $C(t)$, then
\[
0=C_0\text{e}^{A(t_{0})} + \text{e}^{A(t_{0})}\int_0^{t_{0}} \text{e}^{-A(s)}b(s)ds,
\]
which is a contradiction since the right-hand side is strictly positive. Although positivity can be established directly for the present system, we also state a more general result that will be useful for this purpose. Following Theorem 3.4, Chapter 3, in \cite{smith}, we consider the following delay differential equation with an initial condition:
\begin{equation}\label{non-linear system one delay initial condition}
\begin{split}
\dot{x}(t) &= f\left(t,x(t),x(t-\tau)\right), \quad t\geq s\\
x_s &= \phi \in C\left([s-\tau,s],\R^n\right).
\end{split}
\end{equation}
Given $x=(x_1,\dots,x_n)\in\R^n$, we write $x\geq 0$ if $x_i\geq0$ for all $i$. Let $\mathbb{R}^n_+$ denote the set of all $x\in\mathbb{R}^n$ such that $x\geq0$.
\begin{theorem}\label{positivity theorem}
Consider the system \eqref{non-linear system one delay initial condition} and assume that $f$ and $D_x f$ are continuous on $\R\times\R^n_+\times\R^n_+$ (or on some suitable subset), and that
$$ \text{For every } i,t \text{ and for all } x,y\in\R^n_+, \quad x_i=0 \implies f_i(t,x,y)\geq 0,$$
where $f=(f_1,\dots,f_n)$. If the initial condition $\phi$ in \eqref{non-linear system one delay initial condition} satisfies $\phi\geq 0$, then the corresponding solution $x(t)$ of \eqref{non-linear system one delay initial condition} satisfies $x(t)\geq0$ for all $t$ where it is defined.
\end{theorem}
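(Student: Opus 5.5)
The plan is the standard perturbation argument from \cite{smith}. I first prove positivity for a slightly perturbed system with strict inequalities at the boundary, and then pass to the limit using continuous dependence on the vector field. Concretely, for $\varepsilon>0$ let $e=(1,\dots,1)$. I consider
\[
\dot{x}(t) = f\left(t,x(t),x(t-\tau)\right)+\varepsilon e,\qquad x_s=\phi+\varepsilon e .
\]
The continuity of $f$ and $D_xf$ makes the right-hand side locally Lipschitz in the state variables. So this problem has a unique solution $x^\varepsilon$, and by the standard continuous dependence theorem for retarded functional differential equations, $x^\varepsilon\to x$ uniformly on compact subintervals of the maximal interval of existence of $x$ as $\varepsilon\to0$.

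The core step is to show that $x^\varepsilon(t)>0$, componentwise, for all $t$ in its interval of existence. Suppose not. Since $x^\varepsilon_s=\phi+\varepsilon e>0$, there is a first time $t_1>s$ at which some component vanishes, say $x^\varepsilon_i(t_1)=0$. On $[s-\tau,t_1]$ all components are nonnegative, because on $[s-\tau,s]$ this holds by the choice of initial data and on $[s,t_1]$ by minimality of $t_1$. Hence $x^\varepsilon(t_1)\in\R^n_+$ and $x^\varepsilon(t_1-\tau)\in\R^n_+$, with $x^\varepsilon_i(t_1)=0$. The hypothesis then gives
\[
\dot x^\varepsilon_i(t_1)=f_i(t_1,x^\varepsilon(t_1),x^\varepsilon(t_1-\tau))+\varepsilon\geq\varepsilon>0 .
\]
On the other hand, $x^\varepsilon_i>0$ on $[s,t_1)$ and $x^\varepsilon_i(t_1)=0$ force $\dot x^\varepsilon_i(t_1)\le 0$, which is a contradiction.

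Finally, fix any $t$ in the domain of $x$. For $\varepsilon$ small enough, $x^\varepsilon$ exists on $[s,t]$ by continuous dependence, and letting $\varepsilon\to0$ gives $x(t)=\lim x^\varepsilon(t)\geq0$.

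I expect the main obstacle to be a technical one. $f$ is only assumed continuous on $\R\times\R^n_+\times\R^n_+$, while the perturbed trajectories and the continuous dependence theorem need $f$ defined on a neighborhood. I would handle this by first extending $f$ to all of $\R\times\R^n\times\R^n$ by composing with the projection $x\mapsto x^+=(\max(x_i,0))_i$. This extension is continuous and locally Lipschitz, since the projection is $1$-Lipschitz, and it agrees with $f$ on the positive cone. The argument above only evaluates $f$ at points of the cone anyway, and once $x(t)\ge0$ is established, the solution of the extended system coincides with the original one by uniqueness. A lesser point is whether continuous dependence is valid uniformly on compact subintervals of the maximal interval; this is the standard result (e.g.\ in \cite{smith}), and I would simply invoke it.
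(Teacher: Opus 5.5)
Your proof is correct, but it is organized differently from the paper's.

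\textbf{The paper's route.} Following Smith's Theorem~3.4, the paper uses the method of steps. On $[s,s+\tau]$ the delayed argument is the known nonnegative function $\phi(t-s-\tau)$, so $x$ solves the nonautonomous ODE $\dot x=f(t,x,\phi(t-s-\tau))$. This ODE satisfies the ODE quasi-positivity condition, and the ODE positivity result gives $x\ge 0$ on that interval. That nonnegativity then becomes the frozen delayed input on $[s+\tau,s+2\tau]$, and the argument is iterated.

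\textbf{Your route.} You apply the $\varepsilon e$-perturbation and first-crossing argument directly to the delay equation. At the first zero $t_1$, the delayed value $x^\varepsilon(t_1-\tau)$ already lies in the part of the trajectory where positivity is known. So no induction over intervals is needed for the positivity step itself.

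\textbf{What each approach costs.} Your limit $\varepsilon\to 0$ needs continuous dependence for the delay equation with respect to both the vector field and the initial data, i.e.\ a Hale-type theorem. The paper needs only ODE continuous dependence, hidden inside the ODE positivity lemma, and that lemma is typically proved by exactly the perturbation argument you use. So your proof is self-contained but relies on a heavier black box, while the paper's is modular and stays within ODE theory. Your $x^+$ extension also makes explicit the domain issue that the paper leaves as ``(or on some suitable subset)''.

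\textbf{A small correction.} Continuity of $D_xf$ gives local Lipschitz continuity in $x$ only, not in the delayed variable $y$. This does not damage the argument. Uniqueness for both the perturbed and unperturbed problems still follows by the method of steps, and the continuous-dependence theorem you invoke requires only uniqueness of the limit problem.
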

\begin{proof}
    The proof follows the lines of Theorem 3.4, Chapter 3, in \cite{smith}. The corresponding ODE result is first applied on $s\leq t\leq s+\tau$ and the argument then iterated over successive intervals of length $\tau$.
\end{proof}
\begin{remark}
    Looking at our system \eqref{main eq 1}, and setting $x=(x_1,x_2)$ where $x_1=x_1(t)=N(t)$, $x_2=x_2(t)=C(t)$, and $y=(y_1,y_2)$ where $y_1=y_1(t)=N(t-\tau), y_2=y_2(t)=C(t-\tau)$, the system \eqref{main eq 1} has the form
    $$g(t,x,y) = \left(x_1f(x_1)-\varphi\frac{x_1x_2}{\nu+x_1} + \sigma x_1x_2,\, -\eta x_2 -l_e(t)x_2 + \gamma\frac{y_2y_1}{\nu+y_1}  \right).$$
    Since $g$ and $D_xg$ are continuous, the standard existence and uniqueness result implies that, for any admissible initial condition, system \eqref{main eq 1} has a unique maximal solution on some interval $-\tau\leq t<A$.  Writing $g=(g_1,g_2)$, if $x_1=0$ then $g_1(t,x,y)=0$, and if $x_2=0$ then $g_2(t,x,y)=\dfrac{\gamma\, y_2y_1}{\nu+y_1}$. Thus, the hypotheses of Theorem \ref{positivity theorem} are satisfied. Therefore, a nonnegative initial condition gives rise to a nonnegative solution of system \eqref{main eq 1}. So we are in the conditions of Theorem \ref{positivity theorem}, and we can conclude that  nonnegative initial conditions gives rise to a nonnegative solutions for our system \eqref{main eq 1}.
\end{remark}

\begin{lemma} Any solution $X(t,\Phi)$ of \eqref{main eq 1} is globally defined for every $\Phi\in C([-\tau,0],\mathbb{R}^{2}_{+}) $
\end{lemma}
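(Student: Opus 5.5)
The plan is to use the standard continuation principle for delay differential equations: a maximal solution on $[-\tau,A)$ with $A<\infty$ must be unbounded as $t\to A^-$. It therefore suffices to show that $N$ and $C$ stay bounded on every finite interval. By Theorem \ref{positivity theorem} and the subsequent Remark, the solution stays in $\mathbb{R}^2_+$, so we only need upper bounds. We will use that $l_e$ is positive and periodic (hence continuous and bounded) and that $f$ is decreasing with $f(0)>0$. In particular, $f(s)\le f(0)$ for all $s\ge 0$.

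The key step is to control $N$ and $C$ jointly on $[0,A)$ through a Gronwall-type argument. For $N$, we drop the nonpositive term $-\varphi NC/(\nu+N)$ and obtain
\[
\dot N(t)\le f(0)N(t)+\sigma N(t)C(t).
\]
This is quadratic, so a naive Gronwall bound does not close. This is the main obstacle, and the way around it is the step method. On $[0,\tau]$ the delayed term in the $C$ equation is known from $\Phi$ and is bounded by $\gamma\|\Phi_2\|$, using $N/(\nu+N)\le 1$. Hence
\[
\dot C\le -\eta C+\gamma\|\Phi_2\|,
\]
so $C$ is bounded on $[0,\min(\tau,A))$. Then $\dot N\le \big(f(0)+\sigma \sup C\big)N$ gives an exponential bound for $N$ on that interval.

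We then iterate over $[k\tau,(k+1)\tau]$. On each such interval the delayed term is bounded by $\gamma$ times the bound for $C$ already obtained on the previous interval. The same linear estimates then bound $C$, and afterwards $N$, on the new interval. If $\tau=0$, this induction is unavailable. In that case we note that $\dot C\le(\gamma-\eta)C$, since $N/(\nu+N)\le 1$, so $C$ grows at most exponentially, and the bound for $N$ follows as before.

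After finitely many steps the interval $[0,A)$ is covered, and $(N,C)$ is bounded on it. This contradicts the blow-up alternative, so $A=\infty$ and the solution is global. One could alternatively look for a Lyapunov-type combination such as $\gamma N+\varphi C$ to get uniform bounds. However, the sign of $\sigma NC$ prevents cancellation, so we rely on the interval-by-interval linear estimates, which only require finite-time boundedness.
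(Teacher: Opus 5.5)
Your proof is correct and follows essentially the same method-of-steps argument as the paper: on each interval $[k\tau,(k+1)\tau]$ you bound $C$ using the already-controlled delayed term, then bound $N$ from its own equation, and iterate (the paper's subsequent Remark uses exactly your linear estimates $\dot C\le\gamma C(t-\tau)$ and $\dot N\le(\sigma C+\max f)N$). Your exponential bound for $N$ via $f(N)\le f(0)$ is slightly more robust than the paper's bound $N\le\max\{N_0,\tilde N\}$, because that bound needs $f$ to attain the value $-\sigma\mathcal{C}_1$, which the standing hypotheses on $f$ do not guarantee.
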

\begin{proof}
Let $t\in [0,\tau]$. Since $N(t-\tau)$ and $C(t-\tau)$ are positive, \eqref{positivity for C bis} provides the following estimate
\[
C(t)\leq \mathcal{C}_{1}, \quad \text{with} \quad \mathcal{C}_{1}= \max_{t}\left\{\Phi_{2}(0)e^{A(t)}+\frac{\gamma}{\nu}\int_{-\tau}^{t-\tau}\Phi_2(m)\,dm\right\}.
\]
From here, we can estimate $\dot{N}(t)$ as follows
\[
\dot{N}(t)\leq N(t)(f(N(t))+\sigma \mathcal{C}_1),
\]
Consequently, $N(t)\leq \max\{N_{0},\tilde{N}\}:=\mathcal{N}_1$ where $N_{0}=\Phi_{1}(0)$ and $\tilde{N}$ is the unique solution of $f(N)=-\sigma \mathcal{C}_1$. Then, $N(t)$ and $\dot{N}(t)$ are bounded on $[0,\tau]$ implying that $N(t)$ is defined in the whole interval $[0,\tau]$. The same argument can be applied successively on each interval $n\tau\leq t\leq(n+1)\tau$, $n\geq0$, yielding analogous estimates $\mathcal C_{n+1}$ and $\mathcal N_{n+1}$. This guarantees the global existence of solutions of \eqref{main eq 1} for all $t\in[-\tau,\infty)$.
\end{proof}

\begin{remark} Global existence can also be established by a simpler argument. Indeed,
\[
\dot{C}(t)\leq \gamma C(t-\tau), 
\]
which implies that $C(t)$ is globally defined. Moreover,
\[
\dot{N}(t)\leq (\sigma C(t)+k)N(t), \quad \text{with} \quad k=\max{f(N)},
\]
which in turn implies the global existence of $N(t)$.
\end{remark}


\subsection{Stability switches}\label{subsec stability switches} Following Theorem 1.4, Chapter 3, in \cite{kuang}, we present a useful result for the analysis of stability switches as the delay increases.
\begin{theorem}
    Let $f(\lambda,\tau) = \lambda^n + g(\lambda,\tau)$, where $g(\lambda,\tau)$ is an analytic function. Assume
    $$\alpha = \limsup_{\begin{smallmatrix} \text{Re } \lambda >0 & \\ |\lambda|\to\infty \end{smallmatrix} } \left|\lambda^{-n}g(\lambda,\tau)\right|<1.$$
    Then, as $\tau$ varies, the sum of the multiplicities of roots of $f(\lambda,\tau)=0$ in $\{\text{Re } z >0\}$ can change only if a root appears on or crosses the imaginary axis.
\end{theorem}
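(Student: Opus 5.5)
The argument rests on two facts: roots of the analytic function $f(\lambda,\tau)$ depend continuously on $\tau$ (Rouché/Hurwitz), and the hypothesis $\alpha<1$ keeps roots in the right half-plane from escaping to, or entering from, infinity. Fix $\tau_0$ and let $m(\tau)$ denote the total multiplicity of zeros of $f(\cdot,\tau)$ in $\{\operatorname{Re} z>0\}$. I would show that $m$ is locally constant on every $\tau$-interval on which no zero lies on the imaginary axis. Then $m$ can change only at values of $\tau$ where a root appears on or crosses $i\mathbb{R}$.

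\textbf{Step 1 (a priori bound).} First I would localize the right-half-plane roots. Choose $\alpha<\beta<1$. By the definition of the limsup there is $R>0$ such that $|\lambda^{-n}g(\lambda,\tau)|\le\beta$ whenever $\operatorname{Re}\lambda\ge0$ and $|\lambda|\ge R$. For such $\lambda$,
\[
|f(\lambda,\tau)|\ge|\lambda|^n(1-\beta)>0,
\]
so every zero with $\operatorname{Re}\lambda\ge0$ lies in the disc $|\lambda|<R$. The main obstacle is that $R$ must be chosen uniformly for $\tau$ near $\tau_0$. I would read the hypothesis as a limsup that is locally uniform in $\tau$. This holds for the characteristic quasi-polynomials used later, where $g$ is a combination of terms $P(\lambda)+Q(\lambda)e^{-\lambda\tau}$ with $\deg P,\deg Q<n$. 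Alternatively, I would add it as a standing interpretation of the statement.

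\textbf{Step 2 (counting roots).} Now suppose no zero of $f(\cdot,\tau_0)$ lies on $i\mathbb{R}$. Then on the compact boundary $\Gamma$ of
\[
D=\{\operatorname{Re} z>0,\ |z|<R\}
\]
the function $f(\cdot,\tau_0)$ does not vanish. Indeed, on the arc $|z|=R$ this follows from Step 1, and on the segment of $i\mathbb{R}$ it follows from the assumption. By compactness,
\[
\delta=\min_{\Gamma}|f(\cdot,\tau_0)|>0.
\]
Joint continuity of $f$ in $(\lambda,\tau)$, together with uniform continuity on $\Gamma\times[\tau_0-\epsilon,\tau_0+\epsilon]$, gives
\[
|f(\lambda,\tau)-f(\lambda,\tau_0)|<\delta \quad\text{on } \Gamma \text{ for } |\tau-\tau_0| \text{ small}.
\]
Rouché's theorem then shows that $f(\cdot,\tau)$ and $f(\cdot,\tau_0)$ have the same number of zeros, counted with multiplicity, in $D$. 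By Step 1 (with uniform $R$), there are no right-half-plane zeros outside $D$ for these $\tau$. Hence $m(\tau)=m(\tau_0)$ near $\tau_0$.

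\textbf{Step 3 (conclusion).} On any interval of $\tau$ where no root lies on $i\mathbb{R}$, $m$ is locally constant, and therefore constant by connectedness. Consequently $m$ can change only at values of $\tau$ where some root lies on the imaginary axis, that is, where a root appears on or crosses it. Roots cannot come in from infinity, since Step 1 confines them to a bounded set. Equivalently, one could argue via the argument principle:
\[
m(\tau)=\frac{1}{2\pi i}\oint_\Gamma \frac{f'}{f}\,d\lambda
\]
is integer-valued and continuous in $\tau$ wherever $f\neq0$ on $\Gamma$.
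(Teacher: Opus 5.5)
Your proof is correct and follows essentially the same route as the result the paper only cites (Kuang, Ch.~3, Thm.~1.4): $\alpha<1$ confines all zeros with $\text{Re}\,\lambda\ge 0$ to a fixed disc, and Rouch\'e's theorem on the boundary of the half-disc then makes the right-half-plane count locally constant on $\tau$-intervals free of imaginary-axis roots. Your requirement that the radius in Step~1 be locally uniform in $\tau$ is genuinely needed, not pedantry: with only a pointwise-in-$\tau$ limsup the statement can fail (e.g.\ $f=\lambda+1+c\,(1-e^{-s\lambda})^2/s$ with $s=(\tau-\tau_0)^2$, $c=-(1-e^{-1})^{-2}$, read as $\lambda+1$ at $s=0$, has $\alpha(\tau)=0$ for every $\tau$, yet a root near $1/s$ enters the right half-plane from infinity as $\tau$ leaves $\tau_0$), and the uniformity holds automatically for the paper's quasi-polynomials, since their coefficients do not depend on $\tau$ and $|e^{-\lambda\tau}|\le 1$ for $\text{Re}\,\lambda\ge 0$, $\tau\ge 0$.
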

\begin{proof}
    The proof follows the lines of Theorem 1.4, Chapter 3, in \cite{kuang}. 
\end{proof}
\begin{remark}
    This result provides a useful tool for analyzing delay-dependent stability changes in system \eqref{main eq 1}, since it relates such changes to the appearance of purely imaginary roots in the associated characteristic equation (see Appendix A). 
\end{remark}

\subsection{Results for the autonomous linear case} 
Here, we present a general result for the following linear delay system
\begin{equation}\label{linear system with one delay}
\dot{Z}(t)=AZ(t)+BZ(t-\tau),
\end{equation}
where $A,B \in \mathbb{M}_{2\times 2}$ are constant real matrices,
$Z=Z(t,\tau)\in \mathbb{R}^{2}$, and $\tau \geq 0$. This analysis is based on
Theorem 4.1 in Chapter 3 of \cite{kuang}. Throughout this subsection, we assume that $B$ is singular, i.e., $\det B=0$.
By Lemma in Appendix A, for $\lambda=\lambda_1+i\lambda_2 \in \mathbb{C}$, the associated characteristic equation is given by:
\begin{equation}\label{characteristic new general}
P(\lambda,\tau)=0 \quad  \Leftrightarrow \quad p(\lambda)+q(\lambda)e^{-\lambda \tau}=0,
\end{equation}
where
\[
p(\lambda)=\lambda^{2}-\nu_1 \lambda+\nu_2 \quad \text{and} \quad q(\lambda)=\varsigma_1-\varsigma_2 \lambda,
\]
with
\[
\begin{split}
\nu_1=tr A, \quad \nu_2=\det A, \quad \text{and} \quad 
\varsigma_1=\det(a^1|b^2)+\det (b^1|a^2), \quad \varsigma_2=tr B.
\end{split}
\]
Here, $\displaystyle{\det(a^1|b^2)}$ denotes the matrix with the first column from $A$ and the second column from $B$. Similarly, for $\displaystyle{\det(b^1|a^2)}$. Observe that the analytic functions $p(\lambda)$ and $q(\lambda)$ satisfy the following:
\begin{itemize}
    \item[a.] $p(0)+q(0)=\nu_2+\varsigma_1.$
    \item[b.] $\overline{p(-i\lambda_2)}=p(i\lambda_2)$ and $\overline{q(-i\lambda_2)}=q(i\lambda_2).$
    \item[c.] $q(\lambda)$ has no purely imaginary roots.
    \item[d.] For $\lambda_1\geq 0$, we have the following
\[
\begin{split}
\limsup_{|\lambda|\to \infty}&\left\{\left|\frac{q(\lambda)}{p(\lambda)}\right| \right\}=\limsup_{|\lambda|\to \infty}\left\{\left|\frac{\varsigma_1-\varsigma_2\lambda}{ \lambda^2-\nu_1\lambda+\nu_2}\right|\right\}=0.\\
\end{split}
\]
\item[e.] Let $\displaystyle{F:\mathbb{R}\to \mathbb{R}}$, $s\mapsto F(s)$ be defined by 
\begin{equation}\label{polynomial function}
\begin{split}
F(s)&=|p(is)|^2-|q(is)|^2,\\
&=|s^2+i\nu_1s-\nu_2|^2-|\varsigma_1-i\varsigma_2s|^2,\\
&=s^4+(\nu^2_1-2\nu_2-\varsigma^2_2)s^2 + \nu^2_{2}-\varsigma^2_1.
\end{split}
\end{equation}
Clearly, $\displaystyle{F(s)}$ has a finite number of zeros, and furthermore
\begin{equation*}
F(0)=\nu^2_{2}-\varsigma^2_{1}, \quad F^{\prime}(s)=2s(2s^2+(\nu^2_1-2\nu_2-\varsigma^2_2))\quad 
\text{and} \quad \lim_{s\to \pm \infty}F(s)=\infty.
\end{equation*}
The following lemma summarizes some relevant properties of $F(s)$.
\begin{lemma} Consider the real polynomial function $F(s)$ given by \eqref{polynomial function} and let $h=\nu^{2}_1-2\nu_2-\varsigma^2_{2}$. A graphical summary of these cases is provided in Figure~\ref{fig roots linear}.
\begin{itemize}
    \item[I.] If $h\geq 0$ and $F(0)\geq 0$, $F(s)$  has no positive roots.
      \item[II.] If $h<0$ and $F(0)\geq 0$, $F(s)$ has at most two positive roots, possibly none. 
      \begin{itemize}
          \item If $F(0)=0$, $F(s)$ has exactly one positive root at $\hat{s}_{\ast}=\sqrt{-h}$ with $F^{\prime}(\hat{s}_{\ast})>0.$
          \item If $0<F(0)<h^2/4$, $F(s)$ has exactly two positive roots, $0<\hat{s}_1 < \hat{s}_2$ given by 
          \[
\sqrt{2}\hat{s}_{k}=\sqrt{-h+(-1)^{k}\sqrt{{h}^2-4F(0)}}, \quad k=1,2,
          \]
          and such that
          \[
          \text{sgn}(F^{\prime}(\hat{s}_k))=(-1)^k, \quad k=1,2.
          \]
          \item If $F(0)=h^2/4$, $F(s)$ has exactly one positive root $\hat{s}=\sqrt{-h/2}$, with $F^{\prime}(\hat{s})=0$.
        \item If $F(0)>{h}^2/4$, $F(s)$ has no (positive) real roots.
          \end{itemize}
                \item[III.] If $F(0)<0$, $F(s)$ has exactly one positive root $\hat{s}_{2}$ with $\text{sgn}(F^{\prime}(\hat{s}_2))>0$ ($\hat{s}_2$ is that of the previous case with $k=2$.)
      \end{itemize}
\end{lemma}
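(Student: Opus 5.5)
The plan is to reduce everything to a quadratic in $u=s^2$. Set
\[
G(u)=u^2+hu+F(0),
\]
so that $F(s)=G(s^2)$. Positive roots $s$ of $F$ then correspond bijectively, via $s=\sqrt{u}$, to positive roots $u$ of $G$. Differentiating gives $F'(s)=2sG'(s^2)$, so for $s>0$ the sign of $F'(s)$ equals the sign of $G'(s^2)=2s^2+h$. Each case of the lemma therefore becomes a statement about the positive roots of a real quadratic and the sign of its derivative there. The roots of $G$ are
\[
u_\pm=\frac{-h\pm\sqrt{h^2-4F(0)}}{2},
\]
with $u_+u_-=F(0)$ and $u_++u_-=-h$. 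Taking square roots gives exactly the formula $\sqrt2\,\hat s_k=\sqrt{-h+(-1)^k\sqrt{h^2-4F(0)}}$.

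The cases go as follows.

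\textbf{Case I} ($h\ge0$, $F(0)\ge0$). For $u>0$ every term of $G(u)$ is nonnegative and $u^2>0$, so $G(u)>0$. Hence there are no positive roots.

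\textbf{Case III} ($F(0)<0$). The product of the roots is $u_+u_-=F(0)<0$, so the roots are real with opposite signs. Thus $u_+>0>u_-$, and $\hat s_2=\sqrt{u_+}$ is the unique positive root. Moreover $G'(u_+)=\sqrt{h^2-4F(0)}>0$, so $F'(\hat s_2)>0$.

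\textbf{Case II} ($h<0$, $F(0)\ge0$). The discriminant $h^2-4F(0)$ decides the subcases.
\begin{itemize}
\item If $F(0)>h^2/4$, the discriminant is negative and $G$ has no real roots.
\item If $F(0)=h^2/4$, there is a double root $u=-h/2>0$. This gives $\hat s=\sqrt{-h/2}$ with $G'(u)=0$, hence $F'(\hat s)=0$.
\item If $0<F(0)<h^2/4$, both roots are real and distinct. Their product $F(0)$ and their sum $-h$ are both positive, so both roots are positive. Then $G'(u_\pm)=\pm\sqrt{h^2-4F(0)}$, which gives $\operatorname{sgn}F'(\hat s_k)=(-1)^k$.
\item If $F(0)=0$, then $G(u)=u(u+h)$. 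The only positive root is $u=-h$, so $\hat s_\ast=\sqrt{-h}$, and $G'(-h)=-h>0$ gives $F'(\hat s_\ast)>0$.
\end{itemize}
In every subcase there are at most two positive roots.

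There is no genuine obstacle here; the only care needed is the bookkeeping of signs. In particular, $F'(s)$ and $G'(s^2)$ share sign only because $s>0$, and the endpoint subcases $F(0)=0$ and $F(0)=h^2/4$ have to be separated from the generic case.
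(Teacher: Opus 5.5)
Your proof is correct and follows essentially the same route as the paper: the substitution $x=s^2$ reducing $F$ to the quadratic $G(x)=x^2+hx+F(0)$, the explicit roots $2x=-h\pm\sqrt{h^2-4F(0)}$, and the identity $F'(s)=2sG'(s^2)$ to transfer the sign of $G'$ to $F'$. Your use of the sum and product of the roots to locate their signs makes the paper's looser ``real part of $x$'' argument precise. You also obtain the correct value $G'(u_\pm)=\pm\sqrt{h^2-4F(0)}$, where the paper's proof writes $\sqrt{h^2-F(0)}$.
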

\begin{figure}[ht]
\centering
\includegraphics[width=0.7\textwidth]{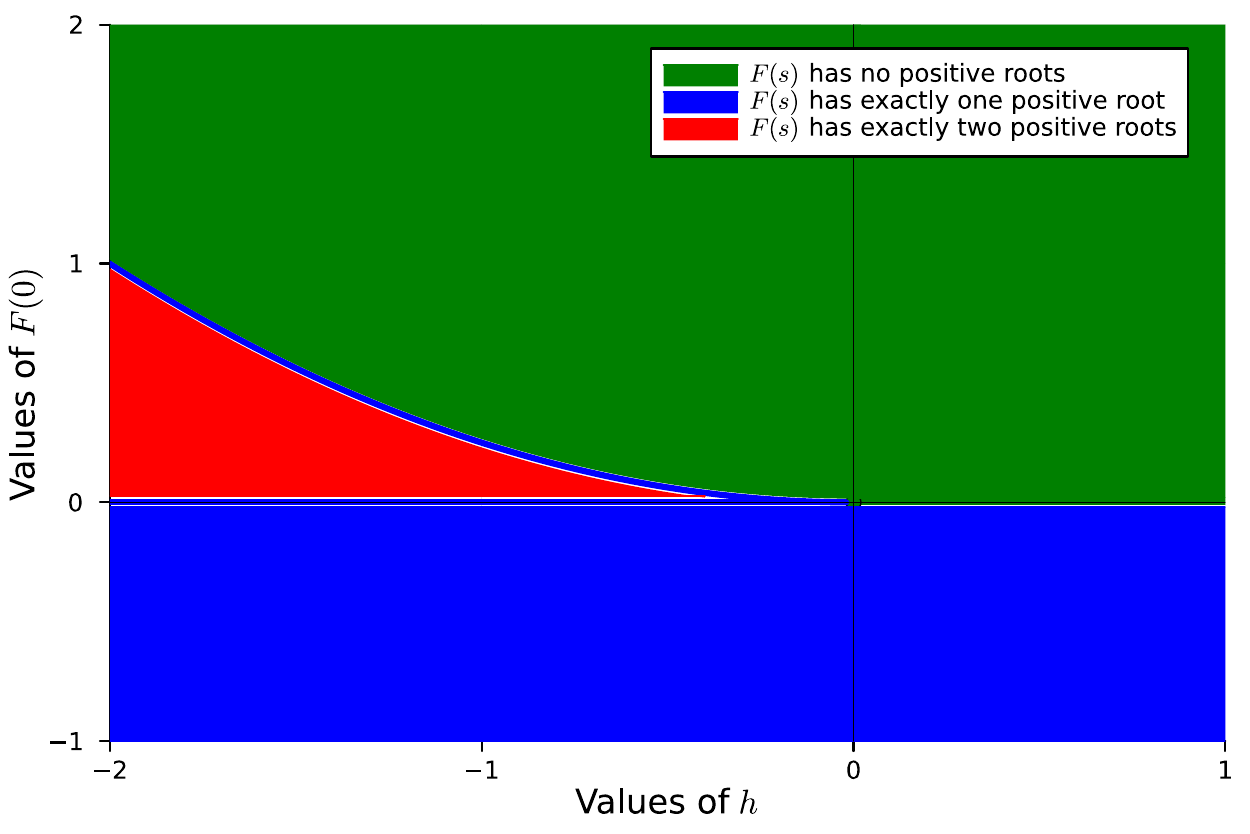}
\caption{Diagram of the regions of the positive roots of $F(s)$ according to the values of $h$ and $F(0)$ (see equation \eqref{polynomial function} and what follows). The blue curve corresponds to $F(0)=h^2/4$.}
\label{fig roots linear}
\end{figure}
\begin{proof}
Let $x=s^2$, and consider the quadratic function $G(x)=F(s(x))$ given by
\[
G(x)=x^2+h x+F(0), \quad \text{with} \quad h=\nu^{2}_1-2\nu_2-\varsigma^2_{2}.
\]
The roots of $F(s)$ are $s=\pm\sqrt{x}$ with
\[
2x=-h\pm\sqrt{h^2-4F(0)},
\]
In Case I, the real part of $x$ is non-positive; hence, $F(s)$ has no positive real roots. 
In Case II, the real part of $x$ is positive, then $F(s)$ has at most two positive real roots $s_k$ given by
\[
\sqrt{2}s_{k}=\sqrt{-h+(-1)^{k}\sqrt{h^2-4F(0)}}, \quad \text{if} \quad 0\leq F(0)\leq  h^2/4, \quad k=1,2.
\]
In particular,
\begin{itemize}
    \item if $F(0)=0$ then $\hat{s}_{\ast}=\sqrt{-h}$ is the only positive root of $F(s)$. 
    \item if $F(0)=h^2/4$ then $\hat{s}_{\ast \ast}=\sqrt{-h/2}$ is the only positive root of $F(s)$. 
\end{itemize}
Moreover, for any positive root $\hat{s}$ of $F(\hat{s})$ we have $F^{\prime}(\hat{s})=2\hat{s}G^{\prime}(\hat{x})$ with $\hat{x}=\hat{s}^2$. Then $\text{sgn}(F^{\prime}(\hat{s}))=\text{sgn}(G^{\prime}(\hat{x}))$
It follows that
\[
\begin{split}
\text{sgn}(F^{\prime}(\hat{s}_{\ast}))&=\text{sgn}(G^{\prime}(\hat{x}_{\ast}))=\text{sgn}(-h)>0, \quad F^{\prime}(\hat{s}_{\ast})=2\sqrt{-h^3}>0,\\
\text{sgn}(F^{\prime}(\hat{s}_{k}))&=\text{sgn}(G^{\prime}(\hat{x}_{k}))=\text{sgn}((-1)^{k}\sqrt{h^2-F(0)})=(-1)^{k}, \quad k=1,2,\\
\end{split}
\]
and $F^{\prime}(\hat{s}_{\ast \ast})=\hat{s}_{\ast \ast}G^{\prime}(\hat{x}_{\ast \ast})=0.$ Finally, if $F(0)<0$ then $\displaystyle{\hat{s}_2}$ is the only positive real root which as we know $F^{\prime}(\hat{s}_2)>0$.
\end{proof}
\end{itemize}
By combining the preceding calculations with Theorem 4.1 in Chapter 3 of \cite{kuang}, we obtain the following result.
\begin{theorem}\label{stability theorem}
Let $\lambda(\tau)=\lambda_1(\tau)+i \lambda_2(\tau) \in \C$ be a solution of \eqref{characteristic new general}, i.e., $P(\lambda(\tau),\tau)=0$. Let
\[
F(0)=\nu^2_{2}-\varsigma^{2}_1, \quad \text{and} \quad h=\nu^{2}_1-2\nu_2-\varsigma^2_{2}.
\]
Assume that $\nu_2+\varsigma_1\neq0$. Then:
\begin{itemize}
    \item[I.] If ${h}\geq 0$ and $F(0)\geq 0$, then the stability of the trivial solution $Z(t,\tau)=0$ of \eqref{linear system with one delay}  does not change as $\tau\geq 0$ increases.
    \item[II.] If $h<0$ and $0\leq F(0)< h^2/4$, there exist at least one and at most two purely imaginary characteristic roots 
    $\hat{\lambda}(\tau_{k})=i\hat{\lambda}_{2,k}(\hat{\tau}_k)$ of \eqref{characteristic new general}, with $0<\hat{\lambda}_{2,1}<\hat{\lambda}_{2,2}$ given by
    \[
    \hat{\lambda}_{2,k}=\sqrt{\frac{-{h}+(-1)^{k}\sqrt{{h}^2-4{F(0)}}}{2}}, \quad k=1,2
    \]
    where $\hat{\tau}_k\geq0$ is defined as the smallest solution of the system 
\[
\sin(\hat{\tau}_k \hat{\lambda}_{2,k})=\frac{-\hat{\lambda_{2,k}}\big(\varsigma_2 \hat{\lambda}^2_{2,k}+\nu_1\varsigma_1-\nu_2 \varsigma_2\big)}{(\varsigma_2 \hat{\lambda}_{2,k})^2+\varsigma^2_{1}} \quad \text{and} \quad  \cos(\hat{\tau_{k}} \hat{\lambda}_{2,k})=\frac{(\varsigma_1-\nu_1\varsigma_2)\hat{\lambda}^2_{2,k}-\varsigma_1\nu_2}{(\varsigma_2 \hat{\lambda}_{2,k})^2+\varsigma^2_{1}}, 
\]
in which the stability of the trivial solution $Z(t,\tau)=0$ of \eqref{linear system with one delay}  can change a finite number of times as $\tau$ increases, and eventually becomes unstable.
\item[III.] If $F(0)<0$, $\hat{\lambda}(\tau_{2})=i\hat{\lambda}_{2,2}(\hat{\tau}_{2})$ 
is the only purely imaginary root of \eqref{characteristic new general}. Moreover, an unstable trivial solution $Z(t,\tau)=0$ of \eqref{linear system with one delay},  never becomes stable for any $\tau \geq 0$. Furthermore, if $Z(t,\tau)=0$ is asymptotically stable for $\tau=0$, then it is uniformly asymptotically stable for $\tau<\hat{\tau}_{2}$, and becomes unstable for $\tau>\hat{\tau}_{2}.$
\end{itemize}
\end{theorem}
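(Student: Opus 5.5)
The proof will follow the standard scheme of Theorem 4.1 in Chapter 3 of \cite{kuang}, using the stability-switch theorem of Subsection \ref{subsec stability switches} and the root analysis of $F$ from the preceding lemma.

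\textbf{Step 1: the stability-switch theorem applies.} Property d.\ gives
\[
\limsup_{\operatorname{Re}\lambda\ge 0,\ |\lambda|\to\infty}|q(\lambda)e^{-\lambda\tau}/p(\lambda)|=0<1 .
\]
Hence the number of roots of $P(\lambda,\tau)=0$ in $\{\operatorname{Re}\lambda>0\}$, counted with multiplicity, can change only when a root lies on or crosses the imaginary axis.

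\textbf{Step 2: imaginary-axis roots.} The hypothesis $\nu_2+\varsigma_1\ne0$ gives $P(0,\tau)=p(0)+q(0)\neq 0$, so $\lambda=0$ is never a root. By property b.\ it suffices to consider $\lambda=is$ with $s>0$. From $p(is)=-q(is)e^{-is\tau}$, taking moduli gives $F(s)=0$. Separating real and imaginary parts of $p(is)e^{is\tau}=-q(is)$ gives a linear system in $\cos(s\tau)$ and $\sin(s\tau)$. Its determinant is $|q(is)|^2=\varsigma_1^2+\varsigma_2^2s^2$, which is nonzero by property c., and solving it yields exactly the displayed formulas for $\sin$ and $\cos$. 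Thus for each positive zero $\hat s$ of $F$ there is a sequence $\tau_k^j=\hat\tau_k+2\pi j/\hat\lambda_{2,k}$ of crossing delays. The preceding lemma then classifies the cases. In case I, $F$ has no positive zero, so no crossing occurs and stability is constant in $\tau$. In case II with $0\le F(0)<h^2/4$, there are one or two zeros $\hat\lambda_{2,k}$. In case III, only $\hat\lambda_{2,2}$ occurs.

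\textbf{Step 3: crossing direction.} Differentiate $P(\lambda(\tau),\tau)=0$ implicitly and evaluate at $\lambda=is$, eliminating $e^{-\lambda\tau}$ via $e^{-\lambda\tau}=-p/q$. The standard computation gives
\[
\operatorname{sgn}\operatorname{Re}\frac{d\lambda}{d\tau}\Big|_{\lambda=is}=\operatorname{sgn}F'(s).
\]
By the lemma, roots at $\hat\lambda_{2,2}$ always cross to the right, since $F'>0$. Roots at $\hat\lambda_{2,1}$ cross to the left, since $F'<0$.

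\textbf{Step 4: counting.} In case III only rightward crossings occur, so the count of roots in the right half-plane never decreases. An unstable trivial solution therefore never becomes stable. If it is stable at $\tau=0$, it remains stable until the first crossing $\hat\tau_2$, and is unstable afterwards. In case II, rightward crossings happen at spacing $2\pi/\hat\lambda_{2,2}$ and leftward crossings at the larger spacing $2\pi/\hat\lambda_{2,1}$. Over long $\tau$-intervals the rightward crossings therefore outnumber the leftward ones. Only finitely many switches can occur before the count stays positive, giving eventual instability. When $F(0)=0$, there is a single root with $F'>0$, and the same argument as in case III applies.

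\textbf{Main obstacle.} The main difficulty is the crossing-direction computation in Step 3, together with the counting in case II. The computation must handle $q$ cleanly, and the counting must justify that the interlaced sequences $\{\tau_1^j\}$ and $\{\tau_2^j\}$ eventually give a net positive number of roots in the right half-plane. I would argue this by comparing the number of elements of each sequence in $[0,T]$ as $T\to\infty$. The fact that each rightward crossing adds a conjugate pair, and each leftward crossing removes one, makes the count exact.
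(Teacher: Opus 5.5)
Your proposal is correct and follows essentially the same route as the paper, whose proof simply defers to the Cooke--van den Driessche argument of Theorem~4.1 in Chapter~3 of \cite{kuang}. That argument locates the imaginary-axis roots through the positive zeros of $F$, uses $\operatorname{sgn}\operatorname{Re}\lambda'(\tau)=\operatorname{sgn}F'(s)$ for the crossing direction, and counts crossings; your write-up makes explicit the crossing-direction computation and the case~II counting that the paper leaves to that reference.
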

\begin{proof}[Proof of Theorem \ref{stability theorem}] The proof follows along the same lines as that of Theorem 4.1 in Section 3 of \cite{kuang}. 
\end{proof}
\begin{remark}\label{rem1} Let $\lambda(\tau)=\lambda_1(\tau)+i \lambda_2(\tau)$ be a solution of \eqref{characteristic new general}. Then,
\[
P(\lambda,\tau)=0 \quad \Leftrightarrow \quad p(\lambda(\tau))=-q(\lambda(\tau))e^{-\lambda(\tau)\tau},
\]
For $\tau=0$ we have
\[
p(\lambda(0))=-q(\lambda(0))\quad \Leftrightarrow \quad \lambda^2(0)-(\nu_1+\varsigma_2)\lambda(0)+\nu_2+\varsigma_1=0,
\]
Consequently,
\[
\lambda(0)=\frac{\nu_1+\varsigma_2 \pm \sqrt{(\nu_1+\varsigma_2)^2-4(\nu_2+\varsigma_1)}}{2}.
\]
\begin{itemize}
\item If $\nu_{2}+\varsigma_1<0$ the trivial solution $Z(t,\tau)=0$ is unstable for $\tau=0$. In addition, if $h\geq 0$ and $\nu_{2}-\varsigma_1\leq 0$ (then $F(0)\geq 0$), the trivial solution $Z(t,\tau)=0$ is unstable for all $\tau\geq 0.$
\item If $\nu_{2}+\varsigma_1>0$ and $\nu_{1}+\varsigma_{2}\gtrless 0$ the trivial solution $Z(t,\tau)=0$ is unstable (asymptotically stable) for $\tau=0$. In addition, if $h<0$, $\nu_{2}-\varsigma_{1}\geq 0$ (then $0\leq F(0)$) and $0\leq F(0)<h^2/4$, the linear delayed system \eqref{linear system with one delay} admits at least one and at most two periodic solutions $Z(t,\hat{\tau}_{k})=\exp[{\hat{\lambda}(\hat{\tau}_k) t}]Z_{0}$, with $\hat{\lambda}(\hat{\tau}_{k})=i\hat{\lambda}_{2,k}(\hat{\tau}_{k})$ and $Z_{0}=Z(0,\hat{\tau}_{k})\neq \textbf{0}$ with $0<\hat{\tau}_1<\hat{\tau}_2$. Further, the stability of $Z(t,\tau)=0$ can change as $\tau$ passes through
$\hat{\tau}_1$ and $\hat{\tau}_2$.
\item If $F(0)=\nu^2_{2}-\varsigma^2_1<0$, the linear delayed system \eqref{linear system with one delay} admits a unique periodic solution $Z(t,\hat{\tau}_2)=\exp[{\hat{\lambda}(\hat{\tau}_2) t}]Z_{0}$, with $Z_{0}=Z(0,\hat{\tau}_{2})\neq \textbf{0}$. In addition, if we have
\[
\nu_1+\varsigma_{2}<0, \quad \text{and} \quad \nu_{2}+\varsigma_{1}>0,
\]
the trivial solution $Z(t,\tau)=0$ is uniformly asymptotically stable for $0\leq \tau<\hat{\tau}_{2}$ and unstable for  $\tau>\hat{\tau}_{2}.$
\end{itemize}
\end{remark}

\section{Linear stability in the autonomous case}
\label{sec:linear stab auton}
This section provides some results on the dynamics of the autonomous delayed differential equation system associated with \eqref{main eq 1}, namely, 
\begin{equation}\label{autonomous main eq}
\begin{split}
\dot{N}(t)&=N(t)f(N(t))-\frac{\varphi  N(t)C(t)}{\nu+N(t)} +\sigma N(t)C(t),\\
\dot{C}(t)&=-\eta C(t)-l_{e}C(t) +\frac{\gamma C(t-\tau)N(t-\tau)}{\nu+N(t-\tau)},
\end{split}
\end{equation}
where $l_e(t)\equiv l_e>0$ for all $ t\in \mathbb{R}$.

We begin by studying the existence and stability of the equilibria. First, notice that \eqref{autonomous main eq} can be written as
\begin{equation}\label{new main EQ aut no-delay}
\dot{Y}=H(Y,Y_{\tau})= \begin{pmatrix}
N\big[f(N)+C(\sigma-g_{\varphi}(N))\big]\\
g_{\gamma}(N_{\tau})C_{\tau}N_{\tau}
-(\eta+l_e)C
\end{pmatrix},
\end{equation}
where $Y=Y(t)=(N(t),C(t))^{tr}$, $Y_{\tau}=Y_{\tau}(t)=(N(t-\tau),C(t-\tau))^{tr}$, $f$ satisfies the assumptions stated in Section \ref{sec:intro}, and 
\[
g_s(N)=\frac{s}{\nu+N}.
\]
The equilibria of \eqref{autonomous main eq} are the solutions of the nonlinear system
\[
\begin{split}
N\big[f(N)+C(\sigma-g_{\varphi}(N))\big]&=0, \\
C[g_{\gamma}(N)N-(\eta+l_e)]&=0.
\end{split}
\]
Direct computations yield two principal equilibria:
\[
\begin{split}
Y_{0}&=(0,0), \hspace{0.7 cm} \text{(trivial equilibrium point)}\\
Y_{1}&=(N_{\dagger},0), \hspace{0.42 cm} \text{(the criminal-free equilibrium point).}
\end{split}
\]
where $N_{\dagger}$ is the unique positive solution of $f(N)=0$. For example, when we have a logistic growth function, we have $N_{\dagger}=m$. A further admissible equilibrium, \[
Y_2=(\hat{N},\hat{C}), \hspace{0.42 cm} \text{(coexistence equilibrium point)}
\]
may arise when 
\begin{equation}\label{non trivial equilibria equation}
f(\hat{N})=\hat{C}(g_{\varphi}(\hat{N})-\sigma)\quad \text{and} \quad  g_{\gamma}(\hat{N})\hat{N}=\eta+l_e.
\end{equation}
Direct computations show that in the logistic case, we obtain
\[
\hat{N}=\frac{\nu (\eta+l_e)}{\gamma-(\eta+l_e)} \quad \text{and} \quad \hat{C}=\frac{f(\hat{N})}{g_{\varphi}(\hat{N})-\sigma}=\frac{\mu\gamma \nu(m\gamma-(\eta+l_e)(m+\nu))}{(\varphi(\gamma-(\eta+l_e))-\sigma \gamma \nu)(\gamma-(\eta+l_e))}.
\]


For $\hat{N}$ to be positive, we need $\gamma > \eta+l_e$. This condition reflects the requirement that the maximal criminalization rate be sufficiently large relative to the combined effects of natural removal and law enforcement for a positive coexistence equilibrium to be possible. On the other hand, assuming $f(\hat{N})>0$, positivity of $\hat{C}$ requires $g_{\varphi}(\hat{N})>\sigma$. This condition express the balance between the negative effects of criminal interactions on the non-criminal population and the community-based protective response represented by $\sigma$.

\subsection{Linear stability.} \label{subsec: linear stability} In the following lines, we focus our attention on the study of the linear stability of each equilibrium point $Y_{\ast}=(N_{\ast}, C_{\ast})$ of the autonomous delayed system \eqref{new main EQ aut no-delay}. Therefore, we consider the associated linearized system  
\begin{equation}\label{linearized system with delay}
\dot{Z}=D_{Y}H(Y_{\ast})Z+D_{Y_{\tau}}H(Y_{\ast})Z_{\tau},
\end{equation}
with
\[
\begin{split}
& D_{Y}H(Y_{\ast})=\begin{pmatrix}
f(N_{\ast})+C_{\ast}(\sigma-g_{\varphi}(N_{\ast}))+N_{\ast}[f^{\prime}(N_{\ast})-C_{\ast}g^{\prime}_{\varphi}(N_{\ast})]& N_{\ast}(\sigma-g_{\varphi}(N_{\ast})) \\
0& -(\eta+l_e)
\end{pmatrix}, \\
& D_{Y_{\tau}}H(Y_{\ast})=\begin{pmatrix}
0 & 0\\
[g^{\prime}_{\gamma}(N_{\ast})N_{\ast}+g_{\gamma}(N_{\ast})]C_{\ast}& g_{\gamma}(N_{\ast})N_{\ast}
\end{pmatrix}.
\end{split}
\]

Following the results in Section \ref{preliminar}, if we denote 
\[
A=D_{Y}H(Y_{\ast}) \quad \text{and} \quad  B=D_{Y_{\tau}}H(Y_{\ast}),
\]
the associated characteristic equation of \eqref{linearized system with delay} is given by
\begin{equation}
P(\lambda,\tau)=0,\quad \text{with} \quad P(\lambda,\tau)=\lambda^2-a_{\ast}\lambda+b_{\ast}+(c_{\ast}-d_{\ast} \lambda)e^{-\tau\lambda},
\end{equation}
where 
\[
a_{\ast}:=\text{tr}(A), \quad b_{\ast}:=\text{det}(A), \quad c_{\ast}=\det{(a^{1}|b^2)} + \det{(b^1|a^2)} \quad \text{and} \quad d_{\ast}=\text{tr}B,
\]
with $a^{j}$ and $b^{j}$ denote the $j$-th columns of $A$ and $B$, respectively. Direct computations show that
\[
\begin{split}
c_{\ast}&=N_{\ast}[g_{\gamma}(N_{\ast})(f(N_{\ast})+N_{\ast}f^{\prime}(N_{\ast}))+C_{\ast}N_{\ast}\big(g_{\varphi}(N_{\ast})g^{\prime}_{\gamma}(N_{\ast})-g_{\gamma}(N_{\ast})g^{\prime}_{\varphi}(N_{\ast})-\sigma g^{\prime}_{\gamma}(N_{\ast})\big)],\\
&=N_{\ast}[g_{\gamma}(N_{\ast})(f(N_{\ast})+N_{\ast}f^{\prime}(N_{\ast}))-\sigma C_{\ast}N_{\ast}g^{\prime}_{\gamma}(N_{\ast})],
\end{split}
\]
We are now in position to establish local stability conditions for each each equilibrium by applying Theorem \ref{stability theorem} and Remark \ref{rem1}. 
Consequently, we identify for each equilibrium point
\[
\begin{split}
a_{\ast}&=\nu_1, \quad b_{\ast}=\nu_2,\quad
c_{\ast}=\varsigma_1, \quad d_{\ast}= \varsigma_2 \quad \text{and} \quad h_{\ast}=a^2_{\ast}-2b_{\ast}-d^2_{\ast}.
\end{split}
\]

\noindent
\textbf{Stability of the trivial equilibrium}. For $Y_{0}=(0,0)$, the characteristic equation is
\[
P_{0}(\lambda,\tau)=0, \quad \Leftrightarrow \quad \lambda^2-(f(0)-(\eta+l_e))\lambda-f(0)(\eta+l_e)=0.
\]
Since $-f(0)(\eta+l_e)<0$ it follows directly that $Y_0$ is unstable for every $\tau\geq 0$ (a saddle-type equilibrium). Notice that the same conclusion follows from Remark \ref{rem1} since in this case
\[
\nu_{1}=f(0)-(\eta+l_e), \quad \nu_2=-f(0)(\eta+l_e), \quad \varsigma_1=\varsigma_2=0,
\]
then
\[
\nu_{2}+\varsigma_1=-f(0)(\eta+l_e)<0, \quad h=f^{2}(0)+(\eta+l_e)^{2}>0 \quad \text{and} \quad F(0)=\nu^2_2>0.
\]

\noindent
\textbf{Stability of the criminal-free equilibrium}. We next study the stability of the criminal-free equilibrium $Y_{1}=(N_{\dagger},0)$ and the ocurrence of purely imaginary characteristic roots in the corresponding linearized system.
Firstly, the characteristic equation is given by
\[
P_{1}(\lambda,\tau)=0, \quad \Leftrightarrow \quad \lambda^2-a_{\dagger}\lambda+ b_{\dagger}+(c_{\dagger}-d_{\dagger}\lambda)e^{-\tau \lambda}=0,
\]
where
\[
\begin{split}
a_{\dagger}&=N_{\dagger}f^{\prime}(N_\dagger)-(\eta+l_e), \hspace{0.5 cm}  c_{\dagger}=N^2_{\dagger}g_{\gamma}(N_{\dagger})f^{\prime}(N_\dagger),\\
b_{\dagger}&=-N_{\dagger}f^{\prime}(N_\dagger)(\eta+l_e),\hspace{0.6 cm} d_{\dagger}=g_{\gamma}(N_{\dagger})N_\dagger.
\end{split}
\]
Since $f^{\prime}(N_{\dagger})<0$, we deduce $a_{\dagger}<0$, $c_{\dagger}<0$, $b_{\dagger}>0$, and $d_{\dagger}>0.$ For the next proposition, we require the following quantities.
\begin{equation}\label{cantidades para el equilibria libre de tumor}
\begin{split}
b_{\dagger}+c_{\dagger}&=\frac{N_{\dagger}f^{\prime}(N_{\dagger})}
{\nu+N_{\dagger}}\big(N_{\dagger}(\gamma-(\eta+l_e))-\nu(\eta+l_e)\big),\\
b_{\dagger}-c_{\dagger}&=-\frac{N_{\dagger}f^{\prime}(N_{\dagger})}{\nu+N_{\dagger}}\big(N_{\dagger}(\gamma+(\eta+l_e))+\nu(\eta+l_e)\big),\\
a_{\dagger}+d_{\dagger}&=N_{\dagger}f^{\prime}(N_{\dagger})+\frac{N_{\dagger}(\gamma-(\eta+l_e))-\nu(\eta+l_e)}{\nu+N_{\dagger}},\\
h_{\dagger}&=\big(N_{\dagger}(f^{\prime}(N_{\dagger})\big)^2-\frac{\big(N_{\dagger}(\gamma-(\eta+l_e))-\nu(\eta+l_e)\big)\big(N_{\dagger}(\gamma+(\eta+l_e))+\nu(\eta+l_e)\big)}{(\nu+N_{\dagger})^2}.
\end{split}
\end{equation}
\begin{prop}\label{prop auton criminal-free stability}
Assume that
\[
N_{\dagger}(\gamma-(\eta+l_e))\neq \nu(\eta +l_e).
\]
\begin{itemize}
\item[a.] If $\displaystyle{N_{\dagger}(\gamma-(\eta+l_e))>\nu(\eta +l_e)}$, then  $Y_{1}=(N_{\dagger},0)$ is unstable for all $\tau\geq 0$.
\item[b.] If $\displaystyle{N_{\dagger}(\gamma-(\eta+l_e))<\nu(\eta +l_e)}$ then  $Y_{1}=(N_{\dagger},0)$ is asymp\-totically stable for all $\tau\geq 0$. 
\end{itemize}
\end{prop}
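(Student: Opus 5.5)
The plan is to avoid the general machinery of Theorem \ref{stability theorem} and instead exploit the special structure of the linearization at $Y_1=(N_\dagger,0)$. Since $C_\ast=0$, the $(2,1)$ entry of $B=D_{Y_\tau}H(Y_1)$, namely $[g'_\gamma(N_\dagger)N_\dagger+g_\gamma(N_\dagger)]C_\ast$, vanishes. Hence
\[
B=\mathrm{diag}(0,d_\dagger), \qquad A=\begin{pmatrix} N_\dagger f'(N_\dagger) & N_\dagger(\sigma-g_\varphi(N_\dagger))\\ 0 & -(\eta+l_e)\end{pmatrix},
\]
with $d_\dagger=g_\gamma(N_\dagger)N_\dagger=\gamma N_\dagger/(\nu+N_\dagger)$. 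Here I use $f(N_\dagger)=0$ in the $(1,1)$ entry. The first step is then to observe that the characteristic function factorizes. Using $c_\dagger=N_\dagger f'(N_\dagger)\,d_\dagger$ and $b_\dagger=-N_\dagger f'(N_\dagger)(\eta+l_e)$, one checks directly that
\[
P_1(\lambda,\tau)=\bigl(\lambda-N_\dagger f'(N_\dagger)\bigr)\bigl(\lambda+(\eta+l_e)-d_\dagger e^{-\lambda\tau}\bigr).
\]
This is just an expansion matching the coefficients $a_\dagger,b_\dagger,c_\dagger,d_\dagger$ listed before \eqref{cantidades para el equilibria libre de tumor}. The first factor gives the single root $\lambda=N_\dagger f'(N_\dagger)<0$ for every $\tau$, so everything reduces to the scalar transcendental equation
\[
\lambda+k=d_\dagger e^{-\lambda\tau}, \qquad k:=\eta+l_e>0 .
\]
The hypothesis of the proposition is exactly $d_\dagger\neq k$, since $N_\dagger(\gamma-k)\gtrless \nu k \iff d_\dagger\gtrless k$.

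\emph{Case (a), $d_\dagger>k$.} Consider the real function $G(\lambda)=\lambda+k-d_\dagger e^{-\lambda\tau}$ on $[0,\infty)$. It is continuous, satisfies $G(0)=k-d_\dagger<0$, and $G(\lambda)\to\infty$ as $\lambda\to\infty$, because $e^{-\lambda\tau}\le 1$. Hence $G$ has a positive real zero for every $\tau\ge0$. This gives a characteristic root with positive real part, so $Y_1$ is unstable for all $\tau\geq 0$.

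\emph{Case (b), $d_\dagger<k$.} Suppose a root satisfies $\mathrm{Re}\,\lambda\ge0$. Then
\[
|\lambda+k|\ge \mathrm{Re}\,\lambda+k\ge k>d_\dagger\ge d_\dagger|e^{-\lambda\tau}|,
\]
which contradicts $\lambda+k=d_\dagger e^{-\lambda\tau}$. So all roots of the second factor lie in $\{\mathrm{Re}\,\lambda<0\}$, for every $\tau\ge0$. Moreover, only finitely many roots lie in any right half-plane $\mathrm{Re}\,\lambda>-\epsilon$, by the standard growth estimate (property d.). It follows that $\sup\mathrm{Re}\,\lambda<0$, and the linearized stability principle for retarded DDEs yields local asymptotic stability of $Y_1$.

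As a consistency check, one can note that this agrees with Remark \ref{rem1}. By \eqref{cantidades para el equilibria libre de tumor}, $b_\dagger+c_\dagger>0$ exactly in case (b), and in that case $F(0)=(b_\dagger-c_\dagger)(b_\dagger+c_\dagger)>0$ and $h_\dagger>0$, so item I of Theorem \ref{stability theorem} applies and no stability switch can occur. The factorization, however, makes this unnecessary. The only step requiring care is verifying the factorization, in particular the identity $c_\dagger=N_\dagger f'(N_\dagger)d_\dagger$ and the fact that the off-diagonal entry $N_\dagger(\sigma-g_\varphi)$ drops out of the determinant. After that, the modulus estimate in case (b) is the key point, and it is elementary.
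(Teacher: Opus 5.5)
Your proof is correct, and it takes a different route from the paper's. The paper never factors the characteristic function. It computes $F(0)=(b_\dagger-c_\dagger)(b_\dagger+c_\dagger)$ and $h_\dagger$ from \eqref{cantidades para el equilibria libre de tumor}, and fixes the status at $\tau=0$ through the signs of $b_\dagger+c_\dagger$ and $a_\dagger+d_\dagger$ (Remark~\ref{rem1}). It then carries that status to all $\tau$ with the root-crossing result, Theorem~\ref{stability theorem}. In case (b), $F(0)>0$ and $h_\dagger>0$, so no root ever reaches the imaginary axis (item I). In case (a), $F(0)<0$, so the single crossing frequency is destabilizing and the unstable state can never regain stability (item III).

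You instead use that $C_\ast=0$ makes $A+e^{-\lambda\tau}B$ upper triangular, split off the root $N_\dagger f^{\prime}(N_\dagger)<0$, and solve the scalar equation $\lambda+k=d_\dagger e^{-\lambda\tau}$ by hand. What your route buys:
\begin{itemize}
\item it is self-contained, with no appeal to Kuang's theorem or to continuation from $\tau=0$;
\item in case (a) it gives sharper information: a real positive root for every $\tau$, not just \emph{never becomes stable};
\item it shows the off-diagonal term $\sigma-g_\varphi(N_\dagger)$ plays no role;
\item it explains the paper's quantities, since $F(s)=(s^2+m^2)(s^2+k^2-d_\dagger^2)$ with $m=N_\dagger f^{\prime}(N_\dagger)$.
\end{itemize}
What the paper's route buys is uniformity across equilibria. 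At the coexistence point the $(2,1)$ entry of $B$ is $[g^{\prime}_{\gamma}(\hat N)\hat N+g_{\gamma}(\hat N)]\hat C\neq 0$, the triangular structure is lost, and only the general machinery applies. It also produces the crossing frequency and critical delay used in the remark that follows.

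One minor citation issue: property d.\ is stated only for $\mathrm{Re}\,\lambda\ge 0$, so it does not justify finiteness of roots in $\mathrm{Re}\,\lambda>-\epsilon$. A simpler fix is to choose, for fixed $\tau$, some $\epsilon>0$ with $k-\epsilon>d_\dagger e^{\epsilon\tau}$. Rerunning your modulus estimate on $\mathrm{Re}\,\lambda\ge-\epsilon$ then gives $\sup\mathrm{Re}\,\lambda\le-\epsilon$ directly.
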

\begin{proof}
The associated function $F(s)$ given by \eqref{polynomial function} satisfies  
\[
F(0)=b^2_{\dagger}-c^2_{\dagger}=(b_{\dagger}-c_{\dagger})(b_{\dagger}+c_{\dagger}).
\]
From the hypothesis and \eqref{cantidades para el equilibria libre de tumor} we deduce that $\displaystyle{F(0)\neq 0}$ and  $\text{sgn}(F(0))=\text{sgn}(b_{\dagger}+c_{\dagger})$. Now, in case a., we have 
\[
b_{\dagger}+c_{\dagger}<0 \quad \text{and} \quad F(0)<0.
\]
and in case $b.$, we have
\[
a_{\dagger}+d_{\dagger}<0, \quad b_{\dagger}+c_{\dagger}>0, \quad F(0)>0 \quad  \text{and} \quad h>0.
\]
The conclusion follows directly from Theorem \ref{stability theorem} case I and case III.
\end{proof}
\begin{remark}
For case a in Proposition \ref{prop auton criminal-free stability}, the linearized system \eqref{linearized system with delay} has a unique periodic solution $\tilde{Y}(t,\tilde{\tau})=\exp[\tilde{\lambda}(\tilde{\tau}_2)]\tilde{Y}_{0}$ with $\tilde{Y}_{0}=\tilde{Y}(0,\tilde{\tau}_2)\neq \textbf{0}$ and $\tilde{\lambda}(\tilde{\tau}_2)=i\tilde{\lambda}_{2,2}(\tilde{\tau}_2)$ where
\[
    \tilde{\lambda}_{2,2}=\sqrt{\frac{-h_{\dagger}+\sqrt{h_{\dagger}^2-4(b^2_{\dagger}-c^2_{\dagger})}}{2}},
    \]
    with $0\leq \tilde{\tau}_{2}$ given as the first solution of the system 
\[
\sin(\tilde{\tau}_2 \tilde{\lambda}_{2,2})=\frac{-\tilde{\lambda}_{2,2}\big(d_{\dagger} \tilde{\lambda}^2_{2,2}+a_{\dagger}c_{\dagger}-b_{\dagger} d_{\dagger}\big)}{(d_{\dagger} \tilde{\lambda}_{2,2})^2+c^{2}_{\dagger}} \quad \text{and} \quad  \cos(\tilde{\tau}_{2} \hat{\lambda}_{2,2})=\frac{(c_{\dagger}-a_{\dagger}d_{\dagger})\tilde{\lambda}^2_{2,2}-c_{\dagger}b_{\dagger}}{(d_{\dagger} \tilde{\lambda}_{2,2})^2+c^{2}_{\dagger}}. 
\]
The occurrence of purely imaginary characteristic roots at this critical delay suggests a possible stability change in the nonlinear system. Establishing a Hopf bifurcation, however, would require additional non-degeneracy and transversality conditions.
\end{remark}

\textbf{Stability of the coexistence equilibrium}. Recall that the coexistence equilibrium $Y_2=(\hat{N},\hat{C})$ satisfies
\[
g_{\gamma}(\hat{N})\hat{N}=\eta+l_e \quad \Leftrightarrow \quad \hat{N}=\frac{\nu(\eta+l_e)}{\gamma-(\eta+l_e)}, \quad \text{for} \quad \gamma>(\eta+l_e),
\]
in addition with
\[
f(\hat{N})=\hat{C}(g_{\varphi}(\hat{N})-\sigma) \quad \Leftrightarrow \quad \nu f(\hat{N})=\left(\frac{\varphi}{\gamma}(\gamma-(\eta+l_e))-\sigma \nu\right)\hat{C}.
\]
Consequently, the coexistence equilibrium $Y_2=(\hat{N},\hat{C})$ is admissible, i.e., $\hat{N},\hat{C}\geq 0$, in the following cases:
\begin{enumerate}
\item[\textit{Case }$I.$ ]
    \[
\text{If} \quad \hat{N}<N_{\dagger} \quad \Leftrightarrow \quad N_{\dagger}(\gamma-(\eta+l_e))>\nu(\eta+l_e) \quad \text{and} \quad \frac{\varphi}{\gamma}> \frac{\sigma \nu}{\gamma-(\eta+l_e)}.
    \]
\item[\textit{Case }$II.$]
    \[
\text{If} \quad \hat{N}>N_{\dagger} \quad \Leftrightarrow \quad N_{\dagger}(\gamma-(\eta+l_e))<\nu(\eta+l_e)\quad \text{and} \quad \frac{\varphi}{\gamma}< \frac{\sigma \nu}{\gamma-(\eta+l_e)}.
    \]
\end{enumerate}

Concerning the linear stability of $ Y_{2}= (\hat{N},\hat{C})$, we proceed as before. First, the corresponding characteristic equation is given by
\[
P_{2}(\lambda,\tau)=0, \quad \Leftrightarrow \quad \lambda^2-\hat{a}\lambda+ \hat{b}+(\hat{c}-\hat{d}\lambda)e^{-\tau \lambda}=0,
\]
where the coefficients are calculated taking into account equation \eqref{non trivial equilibria equation}, and given explicitly by
\begin{equation}\label{parameters for coexistence equilibrium}
\begin{aligned}
\hat{a}&=\hat{N} [f^{\prime}(\hat{N})-\hat{C}g^{\prime}_{\varphi}(\hat{N})]-(\eta+l_e)\\
\hat{b}&=-(\eta+l_e)\hat{N} [f^{\prime}(\hat{N})-\hat{C}g^{\prime}_{\varphi}(\hat{N})]
\end{aligned}
\qquad\text{and}\qquad
\begin{aligned}
\hat{c}&=\frac{\gamma \nu\hat{N}f(\hat{N})}{(\nu+\hat{N})^2}-\hat{b},\\
\hat{d}&=\eta+l_e.
\end{aligned}
\end{equation}
In addition, and for future purposes, we compute  
\begin{equation}\label{signo de parametros}
\begin{split}
\hat{b}+\hat{c}&=\frac{\gamma \nu\hat{N}f(\hat{N})}{(\nu+\hat{N})^2}=\frac{\gamma \nu\hat{N}\hat{C}\left(g_{\varphi}(\hat{N})-\sigma\right)}{(\nu+\hat{N})^2}=\frac{\gamma \hat{N}\hat{C}}{(\nu+\hat{N})^2}\left(\frac{\varphi}{\gamma}(\gamma-(\eta+l_e))-\sigma \nu\right),\\
\hat{b}-\hat{c}&=-2(\eta+l_e)\hat{N}\left(f^{\prime}(\hat{N})-\hat{C}g^{\prime}_{\varphi}(\hat{N})-\frac{\gamma\nu f(\hat{N})}{2(\eta+l_e)(\nu+\hat{N})^2}\right),\\
&=-(\eta+l_e)\left(2\hat{N}f^{\prime}(\hat{N})+\frac{\hat{C}}{\nu+\hat{N}}\left(\frac{\varphi}{\gamma}(\gamma+\eta+l_e)-\sigma\nu\right)\right).\\
\hat{a}+\hat{d}&=\hat{N} [f^{\prime}(\hat{N})-\hat{C}g^{\prime}_{\varphi}(\hat{N})],\\
\hat{h}&=\left(\hat{N}[f^{\prime}(\hat{N})-\hat{C}g^{\prime}_{\varphi}(\hat{N})]\right)^2.
\end{split}
\end{equation}
We are now ready to establish stability criteria for the coexistence equilibrium.
\begin{prop} 
\label{prop: estab auton coexist}
Let us consider the parameters $\hat{a},\hat{b}, \hat{c}$ and $\hat{d}$ given by \eqref{parameters for coexistence equilibrium}.
\begin{itemize}
    \item[a.] Assume that $\displaystyle{ f^{\prime}(\hat{N})<\hat{C}g^{\prime}_{\varphi}(\hat{N})}$ and \textit{Case I} hold; that is,
\[
f^{\prime}(\hat{N})<-\frac{\hat{C}\varphi}{(\nu+\hat{N})^2}, \quad N_{\dagger}(\gamma-(\eta+l_e))>\nu(\eta+l_e) \quad \text{and} \quad \frac{\varphi}{\gamma}> \frac{\sigma \nu}{\gamma-(\eta+l_e)}.
\]
Then, the coexistence equilibrium $Y_2=(\hat{N},\hat{C})$ is locally asymptotically stable, for all sufficiently small $\tau\geq 0$. \\
\item[b.] Assume that \textit{Case II} holds; that is,
\[
N_{\dagger}(\gamma-(\eta+l_e))<\nu(\eta+l_e) \quad \text{and} \quad \frac{\varphi}{\gamma}< \frac{\sigma \nu}{\gamma-(\eta+l_e)}, 
\]
then the coexistence equilibrium $Y_2=(\hat{N},\hat{C})$ is unstable for all sufficiently small $\tau\geq 0$. Moreover, if 
\[
2\hat{N}f^{\prime}(\hat{N})+\frac{\hat{C}}{\nu+\hat{N}}(\frac{\varphi}{\gamma}(\gamma+\eta+l_e)-\sigma\nu)>0, \qquad (\triangle)
\]
then $Y_{2}$ is unstable for all $\tau\geq 0.$\\
\item[c.] Assume that \textit{Case I} and $(\triangle)$ hold. If $f^{\prime}(\hat{N})<\hat{C}g^{\prime}_{\varphi}(\hat{N})$ also holds, then, $Y_2$ is uniformly asymptotically stable for $0\leq \tau<\hat{\tau}_{2}$ and unstable for  $\tau>\hat{\tau}_{2}$, where
\[
    \hat{\lambda}_{2,2}=\sqrt{\frac{-\hat{h}+\sqrt{\hat{h}^2-4(\hat{b}^2-\hat{c}^2)}}{2}},
    \]
    with $0\leq \hat{\tau}_{2}$ given as the smallest positive solution
\[
\sin(\hat{\tau}_2 \hat{\lambda}_{2,2})=\frac{-\hat{\lambda}_{2,2}\big(\hat{d} \hat{\lambda}^2_{2,2}+\hat{a}\hat{c}-\hat{b} \hat{d}\big)}{(\hat{d} \hat{\lambda}_{2,2})^2+\hat{c}^{2}} \quad \text{and} \quad  \cos(\hat{\tau}_2 \hat{\lambda}_{2,2})=\frac{(\hat{c}-\hat{a}\hat{d})\hat{\lambda}^2_{2,2}-\hat{c}\hat{b}}{(\hat{d} \hat{\lambda}_{2,2})^2+\hat{c}^{2}}. 
\]
\end{itemize}
\end{prop}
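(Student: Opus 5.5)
The plan is to reduce each item to the sign data that Remark~\ref{rem1} and Theorem~\ref{stability theorem} need: $\hat a+\hat d$ (which equals $\nu_1+\varsigma_2$), $\hat b+\hat c$ (which equals $\nu_2+\varsigma_1$), $F(0)=\hat b^2-\hat c^2=(\hat b-\hat c)(\hat b+\hat c)$, and $\hat h$. All of these can be read off from \eqref{signo de parametros}. Two facts are used throughout. First, $g'_\varphi(\hat N)=-\varphi/(\nu+\hat N)^2<0$, so the hypothesis $f'(\hat N)<\hat C g'_\varphi(\hat N)$ says exactly that $\hat a+\hat d=\hat N[f'(\hat N)-\hat C g'_\varphi(\hat N)]<0$. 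Second, $\hat h=(\hat a+\hat d)^2\ge 0$.

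\textbf{Item a.} Case I gives $\frac{\varphi}{\gamma}(\gamma-(\eta+l_e))-\sigma\nu>0$. Since $\hat N,\hat C>0$, the first line of \eqref{signo de parametros} yields $\hat b+\hat c>0$, so $\nu_2+\varsigma_1\neq 0$. Together with $\hat a+\hat d<0$, the $\tau=0$ quadratic $\lambda^2-(\hat a+\hat d)\lambda+(\hat b+\hat c)$ from Remark~\ref{rem1} has both roots in the open left half-plane. To pass to small $\tau>0$, I would invoke the continuity statement of the stability-switch theorem in Subsection~\ref{subsec stability switches}. Its hypothesis holds with $n=2$ and $g=-\hat a\lambda+\hat b+(\hat c-\hat d\lambda)e^{-\tau\lambda}$, because $|\lambda^{-2}g|\to 0$ on $\operatorname{Re}\lambda>0$ (property d). The number of roots in $\operatorname{Re}\lambda>0$ can change only through a crossing of the imaginary axis. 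A crossing at $\lambda=0$ is excluded because $\hat b+\hat c\neq0$. By Theorem~\ref{stability theorem}, nonzero imaginary crossings occur only at the critical delays $\hat\tau_k$, and $\hat\tau_k>0$ since no imaginary root exists at $\tau=0$. Hence stability persists for $\tau$ below the first critical delay. This is the step I expect to need the most care: one has to justify that the smallest critical delay is strictly positive and that the count of unstable roots is continuous in $\tau$ starting from $\tau=0$, where the equation degenerates from a quasi-polynomial to a polynomial.

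\textbf{Item b.} Case II gives $\frac{\varphi}{\gamma}(\gamma-(\eta+l_e))-\sigma\nu<0$, so $\hat b+\hat c<0$. By the first bullet of Remark~\ref{rem1}, the $\tau=0$ quadratic then has a positive real root, and the same continuity argument gives instability for small $\tau$. For the second claim, $(\triangle)$ is precisely the statement $\hat b-\hat c<0$ (third line of \eqref{signo de parametros}, using $\eta+l_e>0$). Therefore $\hat b+\hat c<0$, $\hat b-\hat c\le 0$, $\hat h\ge0$ and $F(0)>0$. This is exactly the situation of the first bullet of Remark~\ref{rem1}, and case I of Theorem~\ref{stability theorem} shows that there are no imaginary crossings, so instability holds for all $\tau\ge0$.

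\textbf{Item c.} From Case I we get $\hat b+\hat c>0$, and from $(\triangle)$ we get $\hat b-\hat c<0$, hence $F(0)<0$. Combined with $\hat a+\hat d<0$, this is the third bullet of Remark~\ref{rem1}, i.e.\ case III of Theorem~\ref{stability theorem} with $\nu_1+\varsigma_2<0$ and $\nu_2+\varsigma_1>0$. That gives uniform asymptotic stability for $0\le\tau<\hat\tau_2$ and instability for $\tau>\hat\tau_2$. Substituting $\nu_1=\hat a$, $\nu_2=\hat b$, $\varsigma_1=\hat c$, $\varsigma_2=\hat d$ into the formulas for $\hat\lambda_{2,2}$, $\sin$ and $\cos$ in Theorem~\ref{stability theorem} reproduces the expressions stated in the proposition.
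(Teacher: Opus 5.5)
Your proposal is correct and follows the paper's route. You read off the signs of $\hat a+\hat d$, $\hat b\pm\hat c$ and $\hat h=(\hat a+\hat d)^2$ from \eqref{signo de parametros} and feed them into Remark~\ref{rem1} and cases I and III of Theorem~\ref{stability theorem}. Your explicit small-$\tau$ continuity step via the stability-switch theorem spells out what the paper leaves implicit; in item b your attribution (Case II gives $\hat b+\hat c<0$, $(\triangle)$ gives $\hat b-\hat c<0$) is correct, whereas the paper's written proof swaps these two signs without affecting the conclusions.
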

\begin{proof}
As we did in the proof of Proposition \ref{prop auton criminal-free stability}, the corresponding function $F(s)$ given by \eqref{polynomial function}, in this case satisfies  
\[
F(0)=\hat{b}^2-\hat{c}^2=(\hat{b}-\hat{c})(\hat{b}+\hat{c}).
\] 
Let us prove the case $a.$  Then, from \eqref{signo de parametros} we deduce that
\[
\hat{b}+\hat{c}>0 \quad \text{and} \quad \hat{a}+\hat{d}\lessgtr0.
\]
The conclusion follows by Remark \ref{rem1}. In case b., we have $\displaystyle{\hat{b}-\hat{c}<0}$, then by Remark \ref{rem1} we deduce that $Y_2$ is unstable for $\tau\geq 0$ and small. Now, if $(\triangle)$ also holds, then from \eqref{signo de parametros} we have $\hat{b}+\hat{c}<0$, so $F(0)>0$. Since $h\geq 0$, from Theorem \ref{stability theorem} case I, (or by Remark \ref{rem1}) that $Y_2$ is unstable for all $\tau\geq 0.$ The proof of case c. follows analogously by applying case III of Theorem \ref{stability theorem}.
\end{proof}
\begin{remark}
For case c in Proposition \ref{prop: estab auton coexist}, the linearized system \eqref{linearized system with delay} has a periodic solution 
 $\hat{Y}(t,\hat{\tau})=\exp[\hat{\lambda}(\hat{\tau}_2)]\hat{Y}_{0}$ with $\hat{Y}_{0}=\hat{Y}(0,\hat{\tau}_2)\neq \textbf{0}$ and $\hat{\lambda}(\hat{\tau}_2)=i\hat{\lambda}_{2,2}(\hat{\tau}_2)$, with $\hat{\tau}_2$ and $\hat{\lambda}_{2,2}$ as in Proposition \ref{prop: estab auton coexist}. Thus, the critical value $\hat{\tau}_2$ marks a transition in the local stability of the coexistence equilibrium: it is asymptotically stable for $0\le \tau \le \hat{\tau}_2$ and unstable for $\tau > \hat{\tau}_2$. This highlights the role of the delay as a mechanism capable of destabilizing coexistence between the criminal and non-criminal populations.
\end{remark}
We conclude this section by observing that Theorem \ref{stability theorem}
provides explicit conditions under which purely imaginary characteristic roots
occur at the criminal-free equilibrium $Y_1=(N_\dagger,0)$ and at the
coexistence equilibrium $Y_2=(\hat N,\hat C)$. At the corresponding critical
delay values, the linearized system \eqref{linearized system with delay}
admits nontrivial periodic solutions associated with the purely imaginary
characteristic roots. These results identify candidate critical delays for
stability changes of the nonlinear autonomous system \eqref{autonomous main eq}.
In particular, the occurrence of a simple pair of purely imaginary roots,
together with the appropriate transversality condition, would provide the
spectral ingredients required for a Hopf bifurcation analysis. This motivates
the study of periodic solutions of the nonlinear system \eqref{main eq 1},
which we undertake in the next section.

\section{Non-autonomous case: existence of periodic solutions}
\label{sec: non auton periodic sols}

 In this section, we  consider the original non-autonomous system
\eqref{main eq 1}, in which it shall be assumed that the nonnegative law enforcement function $l_e$ is continuous and $T$-periodic. In particular, we  focus on the existence of strictly positive $T$-periodic solutions. 

We first observe that, when the amplitude of $l_e$ is small, an argument based on the implicit function theorem can be established to obtain a $T$-periodic solution near a positive equilibrium 
of the corresponding autonomous system. This approach is is analogous to the 
one employed in \cite{AAR2025}. However, since law-enforcement intensity may exhibit large temporal variations, we seek sufficient conditions depending only on its average value, provided that the delay is sufficiently small. We observe that if the amplitude of $l_e$ is small, 
then an argument based on the implicit function theorem can be established to obtain a $T$-periodic solution near a positive equilibrium 
of an autonomous system. 
This procedure is analogous to the 
one employed in \cite{AAR2025}. 
However, we emphasize the fact that, due to different factors, the law enforcement function may have oscillations of large amplitude; consequently, we are interested in finding sufficient conditions that depend only 
on the average of $l_e$ provided that the delay is small enough.

Throughout this section, we use the term \textit{counteroffensive measures} as a shorthand for the community-based protective response represented by $\sigma$, and introduced previously.
 
With this in mind, we shall consider two different scenarios: on the one hand, we shall assume that the counteroffensive measures expressed by $\sigma$ are large enough to compensate the effect of the large oscillations in $l_e$; on the other hand, when $\sigma$ is small, it shall be shown that 
$T$-periodic solutions exist as a natural extension of the coexistence equilibrium in the autonomous case. 
In this latter situation, it may be understood 
that, if the average of $l_e$ is large, 
then the criminal population tends to extinction, and no periodic solutions exist while, otherwise, the criminal and non-criminal populations will coexist periodically.

We shall make use of the Leray-Schauder topological degree. 
We remark that, when dealing with periodic problems, typically two different approaches may be implemented. The first one consists in finding fixed points of the Poincaré operator, defined in an appropriate open subset of the phase space, while the second one transforms the problem in a functional equation in a space of periodic functions. Here, our aim is to deduce the existence of solutions for small values of $\tau$ by analyzing the undelayed case; thus, the first approach is not convenient due to the fact that the phase space varies with $\tau$. We shall work, instead, 
in $C_T:=C_T(\R,\R)$, namely
 the Banach space of continuous $T$-periodic functions. For convenience, let us denote the average of an element $\theta\in C_T$ by $\overline\theta$, that is, $\overline\theta:=\displaystyle \frac 1T\int_0^T\theta(s)\, ds$.

Our results will be based on standard continuation theorems (see e.g. \cite[Thm. 6.3]{Amster}). For the reader's convenience, a sketch of the method is introduced as follows. Consider a system 
\begin{equation}\label{abstract}
    X'(t)=F(t,X(t),X(t-\tau)),
\end{equation}
where $F:\R\times \R^2\to \R^2$ is continuous and $T$-periodic in its first coordinate and define the Nemitskii operator $N:(C_T)^2 \times [0,+\infty)\to (C_T)^2$ by $N(X,\tau)(t):= F(t,X(t),X(t-\tau))$.
Next, observe that the problem of finding $T$-periodic solutions of (\ref{abstract})
can be 
 written equivalently as $X=\mathcal K(X,\tau)$, where the operator $\mathcal K:(C_T)^2\times [0,+\infty)\to (C_T)^2$ is given by 
 $\mathcal K(X,\tau):=X(0) + 
\overline{N(X,\tau)} + K(X,\tau)$, with
$$K(X,\tau)(t) :=   \int_0^t [N(X,\tau)(s)- \overline{N(X,\tau)}]\, ds.
$$
A straightforward application of the Arzel\`a-Ascoli theorem shows that $\mathcal K$ is compact; moreover, the following lemma will be useful for the search of solutions when $\tau$ is small:
\begin{lemma}\label{small tau}
    Let $\Omega\subset (C_T)^2$ be open and bounded such that $X\ne \mathcal K(X,0)$ for all $X\in \partial \Omega$. Then there exists $\tau^*>0$ such that, for all $\tau\in (0,\tau^*)$, it is verified that $X\ne \mathcal K(X,\tau)$ for all $X\in\partial \Omega$ and
    $$\deg(I-\mathcal K(\cdot,\tau),\Omega,0)=\deg(I-\mathcal K(\cdot,0),\Omega,0). $$
\end{lemma}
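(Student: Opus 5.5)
The plan is to show that $\mathcal K(X,\tau)\to\mathcal K(X,0)$ uniformly on $\overline\Omega$ as $\tau\to0^+$, and then use homotopy invariance of the Leray–Schauder degree.

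\textbf{Step 1: a priori bound.} Since $\Omega$ is bounded, there is $R>0$ with $\|X\|_\infty\le R$ for all $X\in\overline\Omega$. Continuity of $F$ on the compact set $[0,T]\times \overline{B_R}\times\overline{B_R}$, together with $T$-periodicity, gives a uniform bound $|N(X,\tau)(t)|\le M$ for all $X\in\overline\Omega$, all $\tau$ and all $t$.

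\textbf{Step 2: equicontinuity and the key estimate.} For $X\in\overline\Omega$ the first problem is that elements of $\Omega$ need not be equicontinuous. I would first note that fixed points of $\mathcal K(\cdot,\tau)$ lie in the image of $\mathcal K$, and this image is Lipschitz with constant $M$. I would then argue by contradiction. Suppose $\tau_n\to0$ and $X_n\in\partial\Omega$ satisfy $X_n=\mathcal K(X_n,\tau_n)$. Then each $X_n$ is $M$-Lipschitz, because $X_n'=N(X_n,\tau_n)$. Hence $|X_n(t-\tau_n)-X_n(t)|\le M\tau_n\to0$. By Arzelà–Ascoli a subsequence converges uniformly to some $X\in\partial\Omega$, the boundary being closed. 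Uniform continuity of $F$ on compacts gives $N(X_n,\tau_n)\to N(X,0)$ uniformly. The operators $X\mapsto X(0)+\overline{\,\cdot\,}+\int_0^t(\cdot-\overline{\,\cdot\,})$ are continuous, so passing to the limit yields $X=\mathcal K(X,0)$. This contradicts the hypothesis. Therefore some $\tau^*>0$ exists such that $X\neq\mathcal K(X,\tau)$ on $\partial\Omega$ for every $\tau\in[0,\tau^*)$.

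\textbf{Step 3: degree equality.} Fix $\tau\in(0,\tau^*)$ and consider the homotopy $H(X,s)=\mathcal K(X,s\tau)$ for $s\in[0,1]$. Two facts are needed.
\begin{enumerate}
\item $H$ is compact on $\overline\Omega\times[0,1]$. Its values are bounded by $R+2M+MT$, say, and they are equi-Lipschitz in $t$ up to the constant $X(0)+\overline N$, so Arzelà–Ascoli applies.
\item $H$ is continuous in $(X,s)$. The only delicate point is continuity of $X\mapsto X(\cdot-s\tau)$ jointly in $s$. This holds because $\|X(\cdot-\sigma)-Y(\cdot-\sigma')\|\le\|X-Y\|+\omega_Y(|\sigma-\sigma'|)$, where $\omega_Y$ is the modulus of continuity of a fixed $Y$. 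The bound on $N$ then follows from uniform continuity of $F$ on compacts.
\end{enumerate}
By Step 2, $H$ has no fixed points on $\partial\Omega$ for any $s$. Homotopy invariance then gives $\deg(I-\mathcal K(\cdot,\tau),\Omega,0)=\deg(I-\mathcal K(\cdot,0),\Omega,0)$.

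The main obstacle is Step 2. A direct uniform estimate $\|\mathcal K(X,\tau)-\mathcal K(X,0)\|\to0$ fails on $\overline\Omega$ because elements of $\Omega$ need not be equicontinuous. Arguing by contradiction through the fixed-point sequence circumvents this, since fixed points automatically inherit the Lipschitz bound $M$.
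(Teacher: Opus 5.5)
Your proof is correct and follows the paper's argument. The paper also argues by contradiction with boundary fixed points $X_n=\mathcal K(X_n,\tau_n)$, $\tau_n\to 0$, extracts a uniformly convergent subsequence whose limit is a fixed point of $\mathcal K(\cdot,0)$ on $\partial\Omega$, and concludes by homotopy invariance of the Leray--Schauder degree; you simply make explicit what the paper leaves implicit, namely the equi-Lipschitz bound behind the compactness and the admissibility of the homotopy $s\mapsto\mathcal K(\cdot,s\tau)$.
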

\begin{proof}
    Suppose for contradiction the existence of $\tau_n\searrow 0$ and $X_n\in\partial \Omega$ such that $X_n=\mathcal K(X_n,\tau_n)$. Passing to a subsequence, we may assume that $\mathcal K(X_n,\tau_n)$ converges to some $X$ as $n\to\infty$, so $X_n\to X$ as well. This implies that $X\in\partial \Omega$ and $X=\mathcal K(X,0)$, a contradiction. The conclusion now follows from the homotopy invariance of the Leray-Schauder degree.
\end{proof}
Thus, we will restrict ourselves to study the undelayed case, and $T$-periodic solutions for small values of $\tau$ will be obtained. 
To this end,  
consider the homotopy
$$\mathcal H_\varepsilon (X):= 
X(0) + \overline{N(X,0)} + \varepsilon K(X,0).
$$
As before, for $\varepsilon \in (0,1]$ it is seen that $X$ is a fixed point of $\mathcal H_\varepsilon$
if and only if 
\begin{equation}
    \label{homot}
    X'(t)=\varepsilon F(t,X(t),X(t)),
\end{equation}
and, for $\varepsilon =0$ the topological degree of the map $I-\mathcal H_0$ over some bounded open region $\Omega\subset C_T\times C_T$ is easy to compute as the Brouwer degree of the mapping 
\begin{equation}\label{phi}
\Phi(X):= \overline {F(\cdot,X,X)}\qquad X\in \R^2,
\end{equation}
over $\Omega\cap\R^2$ where, for simplicity, we identify the subset of constant functions of $C_T$ with $\R$. From the homotopy invariance property, the degree of $I-H_\varepsilon$ over $\Omega$ is constant, provided that $I-\mathcal H_\varepsilon$ does not vanish on the boundary of $\Omega$.  


Taking into account the previous setting, it is necessary to verify that the following hypotheses are met: \textbf{(I)} Problem  \eqref{homot} has no solutions in $\partial\Omega$ for $0<\varepsilon \le 1$, \textbf{(II)} $\Phi$ does not vanish on $\partial \Omega\cap\R^2$ and \textbf{(III)} The Brouwer degree of $\Phi$ in $\Omega\cap\R^2$ is not zero. Combined with Lemma \ref{small tau}, this will suffice to guarantee the existence of $T$-periodic solutions for small values of $\tau$.


Let   us first consider the  change of  variables
$$u(t):=\ln N(t), \qquad v(t):= \ln C(t),$$
so the original problem is equivalent  to that of finding a solution $(u,v)\in C_T\times C_T$ of the system  
$$\left\{\begin{array}{l}
 u'(t)= f(e^{u(t)}) + \left(\sigma - \frac \varphi{\nu + e^{u(t)}}\right) e^{v(t)},      \\
v'(t)=-(\eta +l_{e}(t)) +\frac{\gamma  e^{u(t)}}{\nu +e^{u(t)}}e^{v(t-\tau)-v(t)}.     
\end{array}
\right.$$
As we mentioned earlier, we can set $\tau=0$ and consider the one-parameter problem family
\begin{equation}
    \label{homot-main}
    \left\{\begin{array}{l}
 u'(t)= \varepsilon\left( f(e^{u(t)}) + \left(\sigma - \frac \varphi{\nu + e^{u(t)}}\right) e^{v(t)}\right),      \\
v'(t)=\varepsilon\left(-(\eta +l_{e}(t)) +\frac{\gamma  e^{u(t)}}{\nu +e^{u(t)}}\right)   
\end{array}
\right.
\end{equation}
with $\varepsilon\in (0,1]$ and the mapping $\Phi:\R^2\to \R^2$ given by
\begin{equation}
    \label{phi-main}
    \Phi(u,v):= \left( f(e^u) + \left(\sigma - \frac\varphi{\nu + e^u}\right)e^v, 
\frac{\gamma e^u}{\nu+e^u} - (\eta+\overline {l_e})\right).
\end{equation}

Prior to the analysis of the two proposed scenarios, let us observe that, taking average in the second equation of (\ref{homot-main}), it is verified that 
$$\eta + \overline{l_e} = \frac\gamma T\int_0^T \frac{ e^{u(t)}}{\nu +e^{u(t)}}\, dt<\gamma,
$$
thus, a necessary condition for the existence of a $T$-periodic solution is
\begin{equation}
    \label{cond-nec-siempre}  
  \gamma > \eta+\overline {l_e}. 
\end{equation}
However, as  shown below, this condition is not sufficient and 
some extra assumptions shall be needed according to the different cases.

\subsection{With great oscillations come great counteroffensive measures}

Here, we assume that the counteroffensive measures are strong, i.e., $\sigma$ is large. More precisely,
\begin{equation}\label{contraof}
\sigma >\frac\varphi\nu.
\end{equation}
Under this assumption, a second necessary condition for the existence of $T$-periodic solutions in the undelayed case reads 
\begin{equation}
    \label{cond-nec}  
  \eta+\overline {l_e}>\frac {\gamma N_\dagger}{\nu + N_\dagger},  
\end{equation}
where $N_\dagger$ is the unique zero of the function  $f$.
Indeed, let $(u,v)$ be a $T$-periodic solution of \eqref{homot-main}
and assume that  $u$ achieves its absolute minimum $u_{\min}$ at some value $t_{\min}$, then its  derivative vanishes, whence 
$$f(e^{u_{\min}}) = -\left(\sigma - \frac \varphi{\nu + e^{u_{\min}}}\right) e^{v(t_{\min})}< 
-\left(\sigma - \frac \varphi{\nu }\right) e^{v(t_{\min})}
<0, 
$$
that is,  $e^{u_{\min}} >N_\dagger$. 
Next, integrating as before the second equation and taking into account that the mapping $x\mapsto \dfrac x{\nu + x}$ is increasing, it is seen that
$$\eta + \overline{l_e} = \frac\gamma T\int_0^T \frac{ e^{u(t)}}{\nu +e^{u(t)}}\, dt > 
\frac\gamma T\int_0^T \frac{ N_\dagger}
{\nu +N_\dagger}\, dt =  \frac{\gamma N_\dagger}
{\nu +N_\dagger}.
$$

Making use of the continuation method described above, 
we shall prove that the necessary conditions 
\eqref{cond-nec-siempre} and (\ref{cond-nec}) are also sufficient when $\tau$ is small. 

It is relevant to observe that, according to the previous computations, any possible  positive $T$-periodic solution $(N,C)$ of the original problem satisfies ${f(N(t))} <0$ for all $t$. For example, if we assume the logistic case, that is, $f(N)=(1-N/K)$ this implies the periodic solution satisfies $N(t)>K$, meaning that $N(t)$ is  above the carrying capacity $K$. This behavior is consistent with the assumption of strong counteroffensive measures: when $\sigma$ is sufficiently large, the effect of the criminal population alone is not sufficient to reduce the non-criminal population below its carrying capacity.

\begin{theorem} \label{teo sol per 1}
Assume that conditions 
\eqref{cond-nec-siempre}, \eqref{contraof} and \eqref{cond-nec} are satisfied.  
Further,  assume that  $f(x)\to -\infty$ as $x\to+\infty$ . 
Then there  exists $\tau^*>0$ such that (\ref{main eq 1}) admits at least one positive $T$-periodic solution for $\tau<\tau^*$. 
\end{theorem}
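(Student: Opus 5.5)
We follow the continuation scheme set up above: construct a bounded open set $\Omega\subset C_T\times C_T$ in the $(u,v)$ variables, verify hypotheses \textbf{(I)}--\textbf{(III)} for the homotopy \eqref{homot-main} and the map $\Phi$ in \eqref{phi-main}, and then apply Lemma \ref{small tau}. This yields a $T$-periodic $(u,v)$ for $\tau<\tau^*$, and $(N,C)=(e^u,e^v)$ is then a positive $T$-periodic solution of \eqref{main eq 1}. We take $\Omega=\{(u,v): \|u\|_\infty<R_1,\ \|v\|_\infty<R_2\}$, or a box $\{a<u<b,\ c<v<d\}$, with constants chosen from a priori bounds.

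\textbf{A priori bounds for \eqref{homot-main}, uniform in $\varepsilon\in(0,1]$.} Let $(u,v)$ be a $T$-periodic solution.
\begin{itemize}
\item[(i)] Lower bound for $u$. At a minimum $t_{\min}$ of $u$ we get, as in the derivation of \eqref{cond-nec}, $f(e^{u_{\min}})<0$. Hence $u>\ln N_\dagger$.
\item[(ii)] Upper bound for $u$. Averaging the $v$-equation gives $\eta+\overline{l_e}=\overline{\gamma e^u/(\nu+e^u)}$. Since $x\mapsto \gamma x/(\nu+x)$ is increasing and the mean of $e^u/(\nu+e^u)$ equals its value at some point $t_0$, this fixes $u(t_0)=u_0$. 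Here $u_0$ is the unique value with $\gamma e^{u_0}/(\nu+e^{u_0})=\eta+\overline{l_e}$, which exists by \eqref{cond-nec-siempre}. Also $|v'|\le \varepsilon(\eta+\|l_e\|_\infty+\gamma)$, so $v$ has oscillation bounded by $M:=T(\eta+\|l_e\|_\infty+\gamma)$. This bound on $v$ does not directly bound $u$, however.
\item[(iii)] Bounds for $v$ and the rest of $u$. At a maximum $t_{\max}$ of $u$, $f(e^{u_{\max}})=-(\sigma-\varphi/(\nu+e^{u_{\max}}))e^{v(t_{\max})}$. Because $e^{u}>N_\dagger$ and $\sigma>\varphi/\nu$, the factor $\sigma-\varphi/(\nu+e^u)$ lies between $\sigma-\varphi/\nu>0$ and $\sigma$. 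Averaging the $u$-equation gives $\overline{f(e^u)}=-\overline{(\sigma-\varphi/(\nu+e^u))e^v}$. Integrating $u'$ over a period and using the $v$-oscillation bound, it is cleaner to argue that $|u'|\le \varepsilon(\,|f(e^u)|+\sigma e^{v}\,)$. Instead we combine: from the average identity, $\overline{e^v}\le \overline{|f(e^u)|}/(\sigma-\varphi/\nu)$, and $e^v$ is comparable to its mean within a factor $e^{M}$. Then at $t_{\max}$, $|f(e^{u_{\max}})|\le \sigma e^{v(t_{\max})}\le \sigma e^{M}\,\overline{e^v}$. The remaining task is to bound $\overline{|f(e^u)|}$ by something growing slower than $|f(e^{u_{\max}})|$.
\end{itemize}

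Step (iii) is where we expect the main obstacle. A cleaner route is to use $u(t_0)=u_0$ together with a bound on $\int_0^T|u'|$. Since $f(e^u)<0$ throughout and $u'=\varepsilon(f+(\ldots)e^v)$, we have $\int_0^T |u'|=2\int (u')^+\le 2\int \sigma e^v\le 2T\sigma e^{M}e^{v(s)}$ for any $s$. Evaluating at $t_{\min}$, where $(\sigma-\varphi/(\nu+e^{u_{\min}}))e^{v(t_{\min})}=-f(e^{u_{\min}})\le -f(e^{u_0})$ because $u_{\min}\le u_0$, we bound $e^{v(t_{\min})}\le -f(e^{u_0})/(\sigma-\varphi/\nu)$. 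This gives $\|u-u_0\|_\infty\le$ const, hence $u$ is bounded above. The same relation at $t_{\min}$ gives $v(t_{\min})\ge \ln\big(-f(e^{u_{\min}})/\sigma\big)$, which is bounded below because $u_{\min}\le u_0$ forces $f(e^{u_{\min}})\le f(e^{u_0})$ only if $u_{\min}$ stays away from $\ln N_\dagger$. To secure this, we use strict inequality in \eqref{cond-nec}: it gives $u_0>\ln N_\dagger$, and an $\int|u'|$ bound via $v$ upper bounds keeps $u_{\min}$ in a compact set where $-f(e^{u_{\min}})\ge\delta>0$. Obtaining that $\delta$ uniformly is the delicate point. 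We would obtain it by contradiction with a sequence $\varepsilon_n$, noting that $u_{\min}\to\ln N_\dagger$ would force $e^{v(t_{\min})}\to0$ and hence $e^v\to0$ uniformly. Then $u'\to$ a function of sign $f<0$ only, contradicting periodicity unless $u\equiv \ln N_\dagger$, which contradicts $\overline{\gamma e^u/(\nu+e^u)}=\eta+\overline{l_e}>\gamma N_\dagger/(\nu+N_\dagger)$. The growth hypothesis $f\to-\infty$ is used to keep the maximum bound of $u$ consistent with bounded $v$ (by the identity at $t_{\max}$).

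\textbf{Degree computation.} With bounds in hand, we pick the box strictly larger so that \textbf{(I)} holds. For \textbf{(II)}--\textbf{(III)}, the zeros of $\Phi$ are given by $u=u_0$ and $e^v=-f(e^{u_0})/(\sigma-\varphi/(\nu+e^{u_0}))>0$, which is the unique zero and lies inside the box. The Jacobian there is triangular-like:
$$\det D\Phi=-\Big(\sigma-\tfrac{\varphi}{\nu+e^{u_0}}\Big)e^{v}\cdot\frac{\gamma\nu e^{u_0}}{(\nu+e^{u_0})^2}<0.$$
So the Brouwer degree is $-1\neq0$. Lemma \ref{small tau} then gives $\tau^*$.
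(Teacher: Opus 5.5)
Your argument is correct. It follows the paper's continuation scheme (a priori bounds for \eqref{homot-main}, the degree of $\Phi$, then Lemma \ref{small tau}), but it obtains the a priori bounds differently.

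\textbf{Upper bound on $u$.} You anchor at $u(t_0)=u_0$, where $e^{u_0}=K$, and use $\int_0^T|u'|=2\int_0^T(u')^+\le 2\sigma T e^{v_{\max}}$. This is valid because $f(e^u)<0$ along solutions. The paper instead evaluates the $u$-equation at $t_{\max}$, which requires $f\to-\infty$. Contrary to your closing remark, your route never uses that hypothesis. It therefore proves the theorem without it, which is stronger than the relaxation noted after the theorem.

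\textbf{Lower bound on $v$.} Here the paper is more economical. At $t_{\max}$, since $u_{\max}\ge u_0$ and $K>N_\dagger$, one gets $e^{v(t_{\max})}\ge -f(K)/\sigma>0$. The oscillation bound then gives $v\ge m_0$, and $f(e^{u_{\min}})\le-(\sigma-\varphi/(\nu+N_\dagger))e^{m_0}$ separates $u_{\min}$ from $\ln N_\dagger$. Evaluating at $t_{\min}$ cannot give this. Indeed, $u_{\min}\le u_0$ yields $f(e^{u_{\min}})\ge f(e^{u_0})$ (you wrote the reverse), which is only an upper bound for $-f(e^{u_{\min}})$.

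\textbf{Your compactness substitute.} It is valid, but the phrase ``$u'\to$ a function of sign $f$'' is inaccurate when $\varepsilon_n\to0$. The rigorous form is to divide the $u$-equation by $\varepsilon_n$ and average: $\overline{f(e^{u_n})}=-\overline{(\sigma-\varphi/(\nu+e^{u_n}))e^{v_n}}\to0$. Together with $f(e^{u_n})<0$, this forces $e^{u_n}\to N_\dagger$ in measure. That contradicts $\overline{e^{u_n}/(\nu+e^{u_n})}=(\eta+\overline{l_e})/\gamma>N_\dagger/(\nu+N_\dagger)$.

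\textbf{Degree.} You compute it from the sign of $\det D\Phi$ at the unique zero, as the paper does for Theorem \ref{teo sol per 2}, rather than from sign conditions on the faces of the box. Both give $-1$. Your version does not require the lower $u$-edge of the box to lie above $\ln N_\dagger$. Combined with the $t_{\max}$ bound on $v$, this makes separating $u_{\min}$ from $\ln N_\dagger$ unnecessary altogether.
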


\begin{proof}
We shall verify the conditions of the continuation theorem for $\tau=0$ with 
\[
\Omega:=\{ (u,v)\in (C_T)^2: r<u(t)<R, \, -M<v(t)<M\},
\]
for some appropriate  $R>r>\ln N_\dagger$ and $M>0$.
Firstly, assume that  $u,v\in C^1_T:=C_T\cap C^1(\R)$ satisfy \eqref{homot-main} for some $\varepsilon\in (0,1]$, then we already know that  $e^{u(t)} > N_\dagger$  for all $t$. 
Moreover, from the second equation we obtain:
$$\frac 1T \int_0^T \frac{ e^{u(t)}}{\nu +e^{u(t)}}\, dt = \frac{\eta+\overline{l_e}}\gamma :=\Lambda<1,
$$
and the mean value theorem implies, for some $t$, that 
$\displaystyle \frac{ e^{u(t)}}{\nu +e^{u(t)}}=\Lambda$. 
In consequence,
$$e^{u(t)} = \frac {\Lambda\nu}{1-\Lambda}:=K,$$
which, in turn, implies
$$e^{u_{\min}}\le K\le e^{u_{\max}}.$$

Next, employing again  the equality $u'(t_{\min})=0$, we deduce: 
$$ e^{v_{\min}}\le e^{v(t_{\min})}= -\frac{f(e^{u_{\min}})}{\sigma - \frac \varphi{\nu + e^{u_{\min}}}}\le -\frac{f(K)}{\sigma - \frac \varphi{\nu + N_\dagger}}.$$
Moreover, the fact that $v'(t)<\gamma$ implies, for all $t$, that  
$v(t)\le v_{\min}+ T\gamma$ is bounded by a  constant $M_0$  depending only on  $f$ and the parameters of the system. On the other hand, if  $u$ achieves its maximum value $u_{\max}$ at some $t_{\max}$,  we also obtain:  
$$ e^{v(t_{\max})}= -\frac{f(e^{u_{\max}})}{\sigma - \frac \varphi{\nu + e^{u_{\max}}}} \ge -\frac{f(K)}{\sigma},
$$
and the fact that  $v_{\max} \le v_{\min} +\gamma T$  allows to find a uniform lower bound $m_0$ for $v$.
 Furthermore, a uniform upper bound for $u$ is obtained from the equality $u'(t_{\max})=0$, namely
\[
(\ast) \quad f(e^{u_{\max}}) = -
\left({\sigma - \frac \varphi{\nu + e^{u_{\max}}}}\right)e^{v(t_{\max})}
> - \sigma e^{M_0}.
\]
Next, we shall adjust the lower bound for $u$ as follows:  since we already know that   $v\ge m_0$, it is seen that  
$$f(e^{u_{\min}})\le -  \left(\sigma - \frac \varphi{\nu + e^{u_{\min}}}\right) e^{m_0} < -  \left(\sigma - \frac \varphi{\nu+N_\dagger}\right) e^{m_0}.$$

Thus, we may fix $r> \ln N_\dagger$ such that $u_{\min}>r$. 
Moreover, the inequality $e^{u_{\min}}\le K$ also implies $\displaystyle \frac {\gamma e^r}{\nu+e^r}<\eta+\overline{l_e}$. 
According to the continuation theorem, we shall set $M>\max\{M_0, -m_0\}$  large enough and compute the Brouwer degree of the mapping 
$\Phi:[r,R]\times [-M,M]\to \R^2$ defined by \eqref{phi-main}. To this end, notice that 
if $u=R\gg 0$ then $\Phi_2(u,v) >0$ for arbitrary $v$ and, in the same way, if 
$u=r$ then $\Phi_2(u,v) <0$ for all values of  $v$. Because 
$-f(e^u)\le -f(e^R)$, it is clear that $\Phi_1(u,M)>0$ for $u\le R$ for $M\gg 0$ and, finally, enlarging $M$ if necessary 
it is also seen that $\Phi_1(u,-M) <0$ for $u\ge r$. 
This proves that 
$$\deg(\Phi, (r,R)\times (-M,M),0)=-1,$$
and so completes the proof. 
\end{proof}
\begin{remark}
Notice that from $(\ast)$ the hypothesis $f\to -\infty$ can be relaxed, it is enough to assume that $f(N)=-\sigma e^{M_0}$ for some $N$, where the value $M_0$ obtained in the previous proof can be expressed in terms of  $N_{\dagger}$ and the parameters of the model. 
\end{remark}

\subsection{Periodic coexistence under weak counteroffensive measures}

In this section, we assume that counteroffensive measures represented by $\sigma$ are weak. Specifically, recall that the condition \eqref{cond-nec-siempre}, which is necessary for the non-delayed case, implies  that any possible $T$-periodic solution of system \eqref{homot-main} satisfies 
$$e^{u_{\min}}\le K \le e^{u_{\max}},$$
where the value $K$ was defined in the preceding sub-section. 
Taking this as a starting point, here we shall assume that 
\begin{equation}
   \label{sigma-chico} \sigma < \frac\varphi{\nu + K}, 
\end{equation}
which, in terms of the original parameters, can be written as
$$\sigma < \varphi \frac{\gamma -(\eta+\overline{l_e})}{\gamma\nu}.
$$
Let us also notice that  (\ref{sigma-chico}) implies that the following condition is necessary: 
\begin{equation}
    \label{K chico} K < N_\dagger.
\end{equation}
Indeed, suppose by contradiction that $K\geq N_{\dagger}$, then we would have 
$$\sigma < \frac\varphi{\nu + K} \le \frac\varphi{\nu + N_\dagger} \quad \Rightarrow \quad \sigma-\frac{\varphi}{\nu+N_{\dagger}<0}.$$

As in the previous section, we may take a value $t_{\min}$ such that $e^{u(t_{\min})} = e^{u_{\min}} \le K$ and then
$$0=u'(t_{\min}) \quad \text{implies that} \quad f(e^{u_{\min}})=- \left(\sigma - \frac \varphi{\nu + e^{u_{\min}}}\right) e^{v(t_{\min})}>0,$$
consequently $e^{u_{\min}}<N_{\dagger}$. 
On the other hand, $\displaystyle{e^{u_{\max}}\geq K\geq N_\dagger}$, so 
\[
e^{u_{\min}}<N_{\dagger}\leq e^{u_{\max}}, \quad (\ast)
\]
therefore, by continuity, there exists at least one $t_{\ast}$ such that
$e^{u(t_{\ast})}=N_\dagger$. At any such point,
\[
u^{\prime}(t_{\ast})=\varepsilon\left(\sigma - \frac \varphi{\nu + e^{u(t_{\ast})}}\right) e^{v(t_{\ast})}<0,
\]
thus every crossing of the level $N_{\dagger}$ is necessarily downward. But this is is contradiction with the periodicity of $u(t)$, since $(\ast)$ requires the periodic trajectory cross the level $N_{\dagger}$ in both direction. Hence $K<N_{\dagger.}$ As in the previous case, we shall prove that the necessary conditions are also sufficient, namely:
\begin{theorem}\label{teo sol per 2}
Assume that (\ref{cond-nec-siempre}), (\ref{sigma-chico}) and (\ref{K chico}) hold. Then there exists $\tau^*>0$ such that problem \eqref{main eq 1} admits at least one positive $T$-periodic solution when $\tau<\tau^*$. 
\end{theorem}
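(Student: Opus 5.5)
We follow the scheme of Theorem~\ref{teo sol per 1}: we verify hypotheses \textbf{(I)}--\textbf{(III)} for $\tau=0$ on a box
\[
\Omega:=\{(u,v)\in (C_T)^2:\ r<u(t)<R,\ -M<v(t)<M\},
\]
and then conclude with Lemma~\ref{small tau}. Here $r<\ln K<\ln N_\dagger<R$ and $M>0$ will be fixed below.

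\textbf{A priori bounds.} Let $(u,v)$ be a $T$-periodic solution of \eqref{homot-main} for some $\varepsilon\in(0,1]$. Averaging the second equation, as in the previous subsection, gives
\[
e^{u_{\min}}\le K\le e^{u_{\max}}.
\]
At $t_{\min}$ we have $u'=0$, so $f(e^{u_{\min}})=(\frac{\varphi}{\nu+e^{u_{\min}}}-\sigma)e^{v(t_{\min})}$. Since $e^{u_{\min}}\le K$, assumption \eqref{sigma-chico} makes the bracket at least $\frac{\varphi}{\nu+K}-\sigma>0$. Hence $f(e^{u_{\min}})>0$, that is $e^{u_{\min}}<N_\dagger$, and also
\[
e^{v(t_{\min})}\le \frac{f(0)}{\varphi/(\nu+K)-\sigma}.
\]
The second equation gives $|v'|\le \eta+\|l_e\|_\infty+\gamma$, so the oscillation of $v$ is bounded by a constant $L$. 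This yields a uniform upper bound $v\le M_0$.

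Next we bound $u$ from above. Either $e^{u_{\max}}\le\max\{N_\dagger,\,\varphi/\sigma-\nu\}$, or at $t_{\max}$ both $f<0$ and $\sigma-\frac{\varphi}{\nu+e^u}>0$. In the latter case $u'(t_{\max})=0$ gives $-f(e^{u_{\max}})=(\sigma-\frac{\varphi}{\nu+e^{u_{\max}}})e^{v}\le \sigma e^{M_0}$. This bounds $u_{\max}$ provided $f$ reaches the level $-\sigma e^{M_0}$; if not, we argue as in the Remark after Theorem~\ref{teo sol per 1}, or assume $f\to-\infty$. Alternatively, the crossing argument used to prove \eqref{K chico} shows that $u$ cannot stay above the level $\varphi/\sigma-\nu$ unless $f<0$ there. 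We also have $|u'|\le \sup|f|+(\sigma+\varphi/\nu)e^{M_0}$ on the relevant range, so the oscillation of $u$ is bounded. Because $u_{\min}\le\ln K$ and $u_{\max}\ge \ln K$, this bounds $u$ both above and below.

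\textbf{Lower bound for $v$.} This is the main obstacle. Suppose $v$ is very negative everywhere. Then the first equation is $u'\approx\varepsilon f(e^u)$, which drives $u$ toward $\ln N_\dagger$. This is incompatible with $e^{u_{\min}}\le K<N_\dagger$, which uses \eqref{K chico}.

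We make this quantitative as follows. At $t_{\min}$ the identity $f(e^{u_{\min}})=(\frac{\varphi}{\nu+e^{u_{\min}}}-\sigma)e^{v(t_{\min})}$ holds, and $f(e^{u_{\min}})\ge f(K)>0$ since $f$ is decreasing. The bracket is at most $\varphi/\nu$, so
\[
e^{v(t_{\min})}\ge \frac{\nu f(K)}{\varphi}.
\]
Combined with the oscillation bound $L$, this gives $v\ge m_0$ uniformly. With all these bounds in hand, we choose $r<\min\{u\text{-bound},\ln K\}$, $R>\max\{u\text{-bound},\ln N_\dagger\}$, and $M>\max\{M_0,-m_0\}$, so that \textbf{(I)} holds.

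\textbf{Degree computation.} For $\Phi$ in \eqref{phi-main}, the second component $\Phi_2$ is negative at $u=r$ and positive at $u=R$, independently of $v$; this uses $e^r<K<e^R$. On the segment $u=\ln K$, the coefficient of $e^v$ in $\Phi_1$ is $\sigma-\frac{\varphi}{\nu+K}<0$. Thus $\Phi_1(\ln K,v)$ is positive for $v=-M$, because $f(K)>0$, and negative for $v=M$.

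So $\Phi$ has a unique zero in the box, namely $(\ln K,\,v^*)$ with $e^{v^*}=f(K)/(\frac{\varphi}{\nu+K}-\sigma)$. This is the coexistence equilibrium of the averaged autonomous problem. To see that the zero is non-degenerate, note that $\partial_u\Phi_2>0$, $\partial_v\Phi_2=0$ and $\partial_v\Phi_1<0$ there. Hence the Jacobian determinant equals $\partial_u\Phi_1\,\partial_v\Phi_2-\partial_v\Phi_1\,\partial_u\Phi_2=-\partial_v\Phi_1\,\partial_u\Phi_2>0$, and the degree is $+1$.

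The degree could also be obtained by a homotopy to $(u-\ln K,\,v^*-v)$. To make sure $\Phi$ does not vanish on the boundary, we use that $\Phi_2=0$ forces $u=\ln K$, and that $\Phi_1(\ln K,\pm M)\ne0$ for $M$ large. This gives \textbf{(II)} and \textbf{(III)}. Lemma~\ref{small tau} then provides $\tau^*>0$, and the solutions $(N,C)=(e^u,e^v)$ are positive $T$-periodic solutions of \eqref{main eq 1} for $\tau<\tau^*$.
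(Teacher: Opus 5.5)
Your overall scheme matches the paper's: the averaging bound $e^{u_{\min}}\le K\le e^{u_{\max}}$, the identity $u'(t_{\min})=0$ used twice to bound $v$ from above and below, the derivative bound on $v$, and degree $+1$ from the unique nondegenerate zero $(\ln K,v^*)$ of $\Phi$.

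The gap is in the bound for $u$. Evaluating at $t_{\max}$ only gives $f(e^{u_{\max}})\ge -\sigma e^{M_0}$. This bounds $u_{\max}$ only if $f$ attains the level $-\sigma e^{M_0}$. Your fallbacks (``assume $f\to-\infty$'', or the Remark after Theorem~\ref{teo sol per 1}, which is the same condition) are hypotheses that Theorem~\ref{teo sol per 2} does not make. Admissible growth functions need not reach that level: $f(s)=1-\frac{2s}{1+s}$ is decreasing, has $f(0)=1$ and $N_\dagger=1$, and satisfies $f>-1$. The ``crossing'' sentence gives no bound at all. The two-sided estimate $|u'|\le\sup|f|+(\sigma+\varphi/\nu)e^{M_0}$ ``on the relevant range'' involves $\sup|f(e^{u})|$ over the values of $u$. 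That quantity is finite only if $u$ is already bounded above or $f$ is bounded on $[0,\infty)$, and you never make that case distinction. So, as written, the upper bound on $u$ is not proved under the stated hypotheses, and neither is your lower bound, which you derive from the same oscillation estimate.

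The missing observation is that a one-sided estimate suffices. Since $f$ is decreasing and $-\frac{\varphi}{\nu+e^{u}}e^{v}\le 0$, every solution of \eqref{homot-main} with $\varepsilon\in(0,1]$ satisfies $u'\le A:=f(0)+\sigma e^{M_0}$. For a $T$-periodic function this already gives $u_{\max}-u_{\min}\le AT$: integrate from $t_{\min}$ forward to a maximum point in $[t_{\min},t_{\min}+T]$. Combined with $u_{\min}\le \ln K\le u_{\max}$, this gives $|u-\ln K|\le AT$ with no condition on $f$ at infinity, which is exactly how the paper obtains $R$. Your own pieces can also be assembled via a dichotomy: a decreasing $f$ either attains $-\sigma e^{M_0}$, or is bounded on $[0,\infty)$, in which case your two-sided bound on $u'$ holds globally. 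With that repair the rest of your argument goes through.

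One further correction: drop the aside about a homotopy to $(u-\ln K,\,v^*-v)$. That map has Jacobian $\mathrm{diag}(1,-1)$, hence degree $-1$, so it cannot be admissibly homotopic to $\Phi$ on the box. The comparison map consistent with your (correct) Jacobian computation is $(v^*-v,\,u-\ln K)$.
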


\begin{proof}
As before, we shall firstly find uniform bounds for the possible solutions  $(u,v)$ of \eqref{homot-main} with $\varepsilon \in (0,1]$. We already know that $e^{u_{\min}}\le K\le e^{u_{\max}}$ and 
$$e^{v(t_{\min})} = \frac{f(e^{u_{\min}})}{\frac{\varphi}{\nu + e^{u_{\min}}}-\sigma } \le \frac{f(0) }{\frac{\varphi}{\nu + K}-\sigma },   
$$
which, combined with the boundedness of $v'$, provides an upper bound for  $v$. In turn, this implies that the derivative of $u$ is bounded from above by some constant $A$, whence
$$u_{\max} \le u_{\min} + AT, 
$$    
and consequently
$$K\le e^{u_{\max}}\le e^{u_{\min}} e^{AT} \le Ke^{AT}. $$
Summarizing, there exists $R$ depending only on the parameters such that $|u(t)|<R$ for all $t$. 
Finally, the fact that  $e^{u_{\min}}\le K < N_\dagger$ yields the conclusion
$$e^{v(t_{\min})} = \frac{f(e^{u_{\min}})}{\frac{\varphi}{\nu + e^{u_{\min}}}-\sigma } \ge \frac{f(K)}{\frac \varphi\nu - \sigma},
$$
and using the boundedness of $v'$ again, we obtain a uniform lower bound for  $v$. Thus, an open bounded set $\Omega$ is defined as before, now with $r=-R$.

Next, observe that the map $\Phi$ vanishes only in one point $(u,v)\in \R^2$, namely when 
$$e^u=K,\qquad e^v= \frac{f(K)}{\frac \varphi{\nu+K} - \sigma}.
$$
Such a point belongs to $\Omega$ and, since $\Phi$ is smooth, we may compute 
$$D\Phi(u,v)=
\left(
\begin{array}{cc}
     M & \left(\sigma - \frac\varphi{\nu + K}\right)e^v \\
     \frac{\gamma\nu K}{(\nu+K)^2} & 0
\end{array}
\right),
$$
where the quantity $M=M(u,v)$ is irrelevant for our purposes since, 
according to the properties of the Brouwer degree, 
we only need to verify that the previous matrix is invertible. Indeed, 
from the hypotheses, it is seen that $\det(D\Phi(u,v))>0$, so we deduce that 
$$\deg(\Phi, (-R,R)\times (-M,M),0) =1,$$
and the continuation theorem applies. 
\end{proof}


\section{Interpretations of the mathematical results}\label{sec: strategy control crime}
The analytical results obtained in Sections \ref{sec:linear stab auton} and \ref{sec: non auton periodic sols} provide a mathematical basis for understanding the mechanisms that may influence the long-term dynamics of criminal activity and criminal populations. In this section, we interpret these results from the perspective of crime prevention and control, with the aim of illustrating how the insights derived from the mathematical analysis may inform the design and implementation of public policies. While the model does not seek to prescribe specific policy interventions, its results may provide a useful theoretical basis for policymakers and experts in criminology, law enforcement, and public policy to identify relevant mechanisms, assess potential intervention strategies, and support evidence-based approaches to crime prevention and reduction.

In the autonomous case, Proposition \ref{prop auton criminal-free stability} provides a threshold condition for the 
local stability of the criminal-free equilibrium $(N^\dagger,0)$. More 
precisely, this equilibrium is locally asymptotically stable when
\[
    \frac{\gamma N^\dagger}{\nu+N^\dagger}<\eta+l_e.
\]
Thus, the combined effect of natural removal and law enforcement, represented 
by $\eta+l_e$, must exceed the effective criminalization rate evaluated at 
the criminal-free population level $N^\dagger$. Conversely, when
\[
    \frac{\gamma N^\dagger}{\nu+N^\dagger}>\eta+l_e,
\]
the criminal-free equilibrium becomes unstable. Hence, within the model, the 
stability of the criminal-free state is determined by the balance between 
criminalization and the mechanisms contributing to the reduction of the 
criminal population. The delay $\tau$ plays a different role in the dynamics of the coexistence 
equilibrium. As shown in Proposition \ref{prop: estab auton coexist}, changes in $\tau$ may modify its local 
stability, and a critical delay may separate stable and unstable regimes. 
Since $\tau$ represents the latency between exposure to criminal behavior and 
the subsequent behavioral response, this result indicates that the timing of 
such responses can influence the persistence and stability of coexistence 
between criminal and non-criminal populations. In particular, sufficiently 
large delays may destabilize an otherwise stable coexistence state and give 
rise to oscillatory behavior in the corresponding linearized dynamics.
We next consider the non-autonomous case analyzed in Section \ref{sec: non auton periodic sols}. Theorems \ref{teo sol per 1}
and \ref{teo sol per 2} provide sufficient conditions for the existence of strictly positive 
$T$-periodic solutions under strong and weak counteroffensive measures, 
respectively. A condition common to both results is
\[
    \gamma>\eta+\overline{l_e},
\]
where $\overline{l_e}$ denotes the average law-enforcement intensity over one 
period. Thus, within the hypotheses of these existence theorems, the 
criminalization rate must exceed the combined effect of natural removal and 
average law enforcement. The resulting positive periodic solutions represent 
recurrent coexistence regimes in which both the criminal and non-criminal 
populations persist and fluctuate over time.
It is worth noting that the stability threshold for the criminal-free 
equilibrium in the autonomous case does not explicitly involve the parameter 
$\sigma$. Similarly, the condition $\gamma>\eta+\overline{l_e}$, which 
appears in both periodic-existence results, has the same form under the strong 
and weak counteroffensive regimes considered in Section \ref{sec: non auton periodic sols}. This does not imply 
that the community-based response represented by $\sigma$ is dynamically 
irrelevant; rather, its role is reflected in the additional hypotheses under 
which the corresponding existence results are obtained.
Overall, the analysis distinguishes three mechanisms with different 
dynamical effects: the balance between criminalization and removal determines 
the local stability of the criminal-free equilibrium, the behavioral delay 
may alter the stability of coexistence, and periodic law enforcement is 
compatible, under the conditions established in Section \ref{sec: non auton periodic sols}, with recurrent 
positive dynamics. These interpretations are qualitative and should be 
understood within the assumptions and scope of the proposed model.

\section{Numerical simulations} \label{sec:numerical sims}

This section presents numerical simulations for both the autonomous and non-autonomous cases. The aim is to illustrate the theoretical results obtained in Sections  \ref{sec:linear stab auton} and \ref{sec: non auton periodic sols} and explore some dynamical features of the model numerically. The simulations are performed using the \texttt{DifferentialEquations.jl} (see \cite{rackauckas2017differentialequations}) and \texttt{Plots.jl} (see \cite{tom_breloff_2025_15047900}) libraries in the \texttt{Julia} programming language (see \cite{Julia-2017}). 

\subsection{Numerical simulations in the autonomous case.} We first consider the autonomous system \eqref{autonomous main eq} and illustrate the stability properties of the criminal-free and coexistence equilibria.

If we look at the criminal-free equilibrium $(N_{\dagger},0)$, where $N_{\dagger}$ is the unique positive zero of $f(N)$ (see system \eqref{main eq 1}, for example $N_{\dagger}=m$ if $f(N)=m-N$, that is, in the logistic case), the stability depends on $\dfrac{\gamma N_{\dagger}}{\nu+N_{\dagger}} \gtrless \eta+l_e$ as stated in Proposition \ref{prop auton criminal-free stability}. If we define $\mathcal{R}=N_{\dagger}g_{\gamma}(N_{\dagger})/(\eta+l_e)$, we are interested in whether $\mathcal{R} \gtrless 1$. Since stability changes do not depend on the delay, we can perform a delay-independent stability analysis in terms of key criminalization parameters. Figure \ref{fig auton eq criminal-free gamma and le sensitivity} illustrates this analysis for parameters $\gamma$ and $l_e$, as the system crosses the critical threshold $\mathcal{R}=1$, for the stability of the criminal-free equilibrium ($\mathcal{R}<1$ stable, $\mathcal{R}>1$ unstable). The figure displays the last $1000$ values of $N$ and $C$, obtained after allowing the system to evolve over a sufficiently long time interval.
\begin{figure}[ht]
\centering
\begin{subfigure}{0.49\textwidth}
    \centering
    \includegraphics[width=\textwidth]{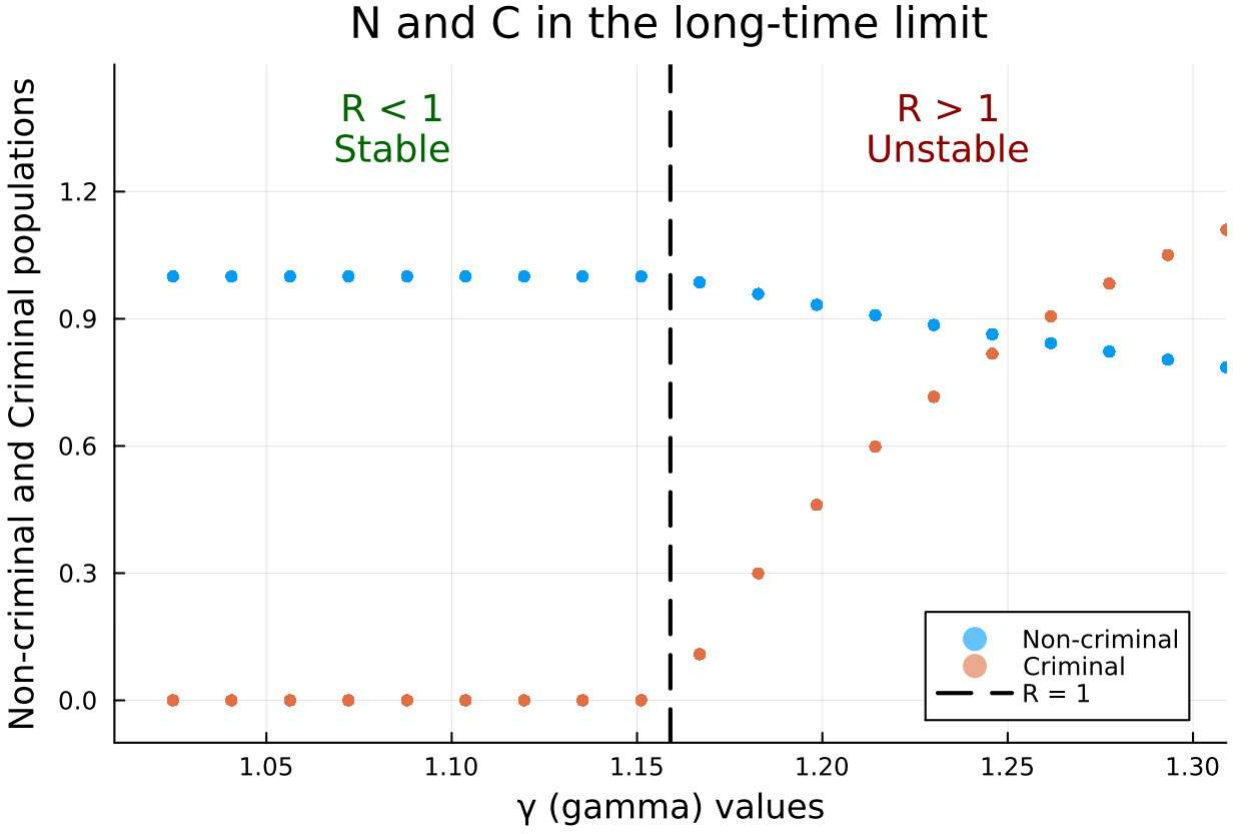}
    \caption{Parameter $\gamma$ varies, $l_e = 0.51$.}
\end{subfigure}
    \hfill
\begin{subfigure}{0.49\textwidth}
    \centering
    \includegraphics[width=\textwidth]{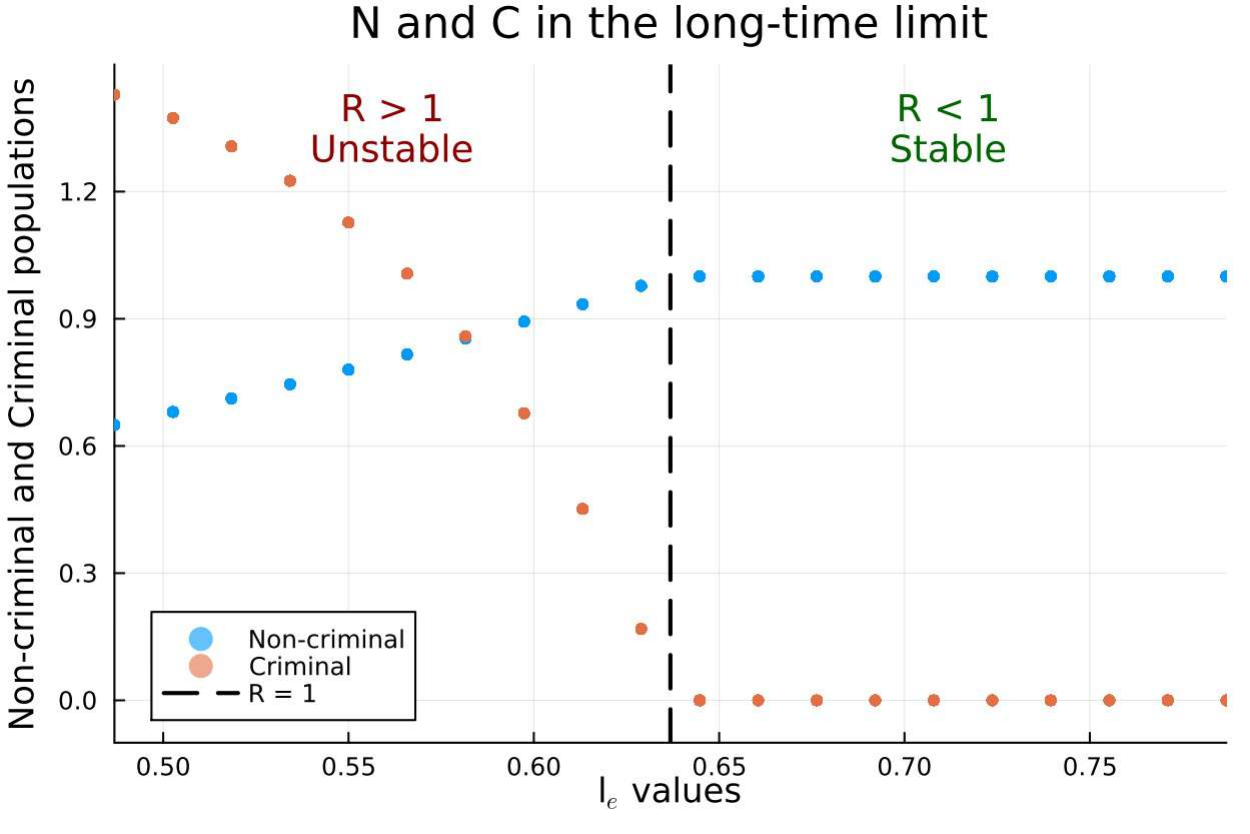}
    \caption{Parameter $l_e$ varies, $\gamma = 1.4$.}
\end{subfigure}
\caption{Simulation of the autonomous system \eqref{autonomous main eq}  when $\tau=2$, with $f(N)=1-N$, $\varphi=1$, $\nu=0.9$, $\sigma=0.4$, $\eta=0.1$ and initial condition $(N_0(t), C_0(t))=(0.5, 0.5), -\tau\leq t\leq 0$. We observe that the criminal-free equilibrium $(N_{\dagger}=1, C=0)$ changes stability as critical threshold $\mathcal{R}=1$ is crossed: $\mathcal{R}<1$ corresponds to stability and $\mathcal{R}>1$ corresponds to instability (See Proposition \ref{prop auton criminal-free stability}).} 
\label{fig auton eq criminal-free gamma and le sensitivity}
\end{figure}

We next consider the coexistence equilibrium $(\hat{N},\hat{C})$ (see Section \ref{sec:linear stab auton}), here we have stability under the conditions of Proposition \ref{prop: estab auton coexist}. Figure \ref{fig auton tau critico eqCoex} illustrates the stability change as the delay crosses its critical threshold.  The figure displays the last $1000$ values of $N$ and $C$, obtained after allowing the system to evolve over a sufficiently long time interval.
\begin{figure}[ht]
\centering
\includegraphics[width=0.7\textwidth]{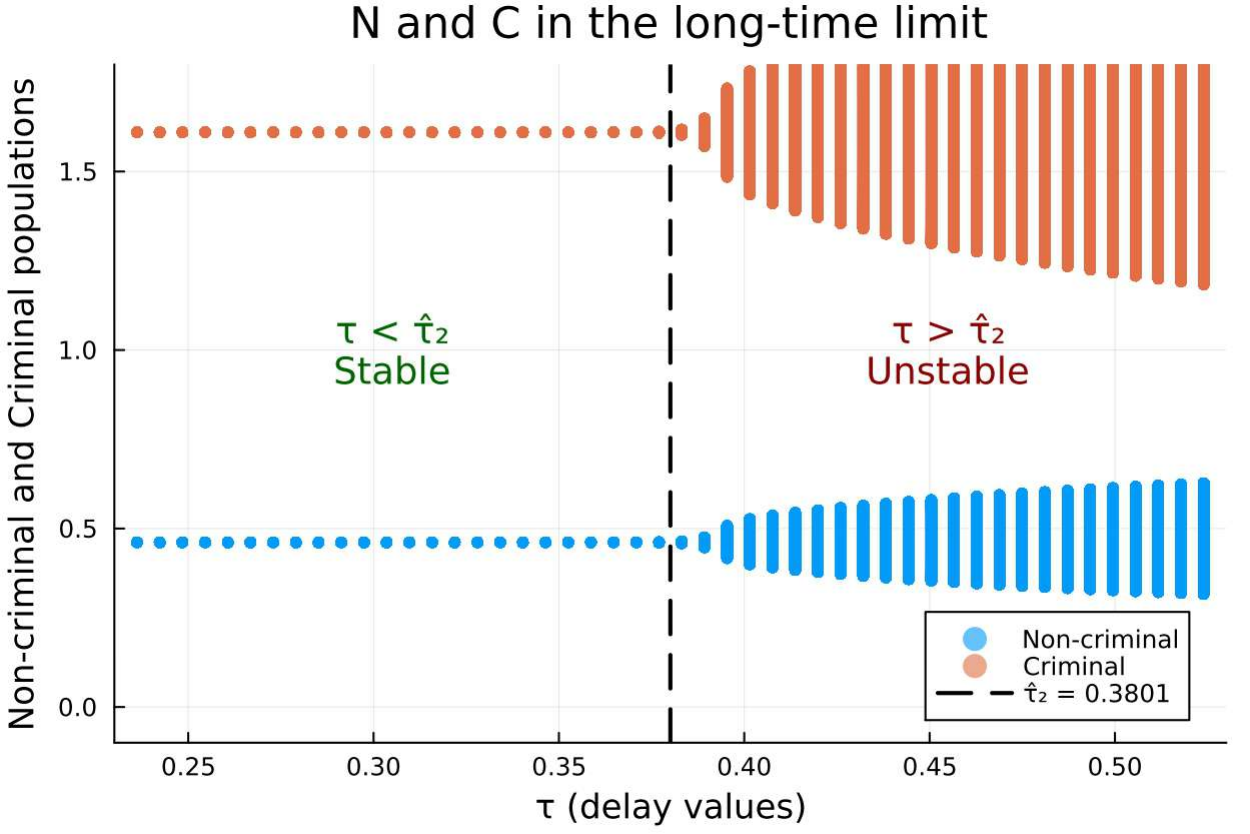}
\caption{Simulation of the autonomous system \eqref{autonomous main eq} with initial conditions $(N_0(t), C_0(t))$, $-\tau\leq t\leq 0$, close to the coexistence equilibrium $(\hat{N},\hat{C})\simeq (0.4613, 1.61)$, with the same parameter values as in Figure \ref{fig auton eq criminal-free gamma and le sensitivity}, except for $\gamma=1.8$. We observe a change in the stability of the coexistence equilibrium $(\hat{N},\hat{C})$ as the delay crosses the critical threshold (see Proposition \ref{prop: estab auton coexist}).}
\label{fig auton tau critico eqCoex}
\end{figure}

Finally, for the non-delayed case, we show in Figure \ref{fig auton non-delay direction field} the direction field for the asymptotically stable criminal-free equilibrium (with the same parameter values as Figure \ref{fig auton eq criminal-free gamma and le sensitivity}) and for the asymptotically stable coexistence equilibrium (same parameter values as Figure \ref{fig auton tau critico eqCoex}).
\begin{figure}[ht]
\centering
\begin{subfigure}{0.49\textwidth}
    \centering
    \includegraphics[width=\textwidth]{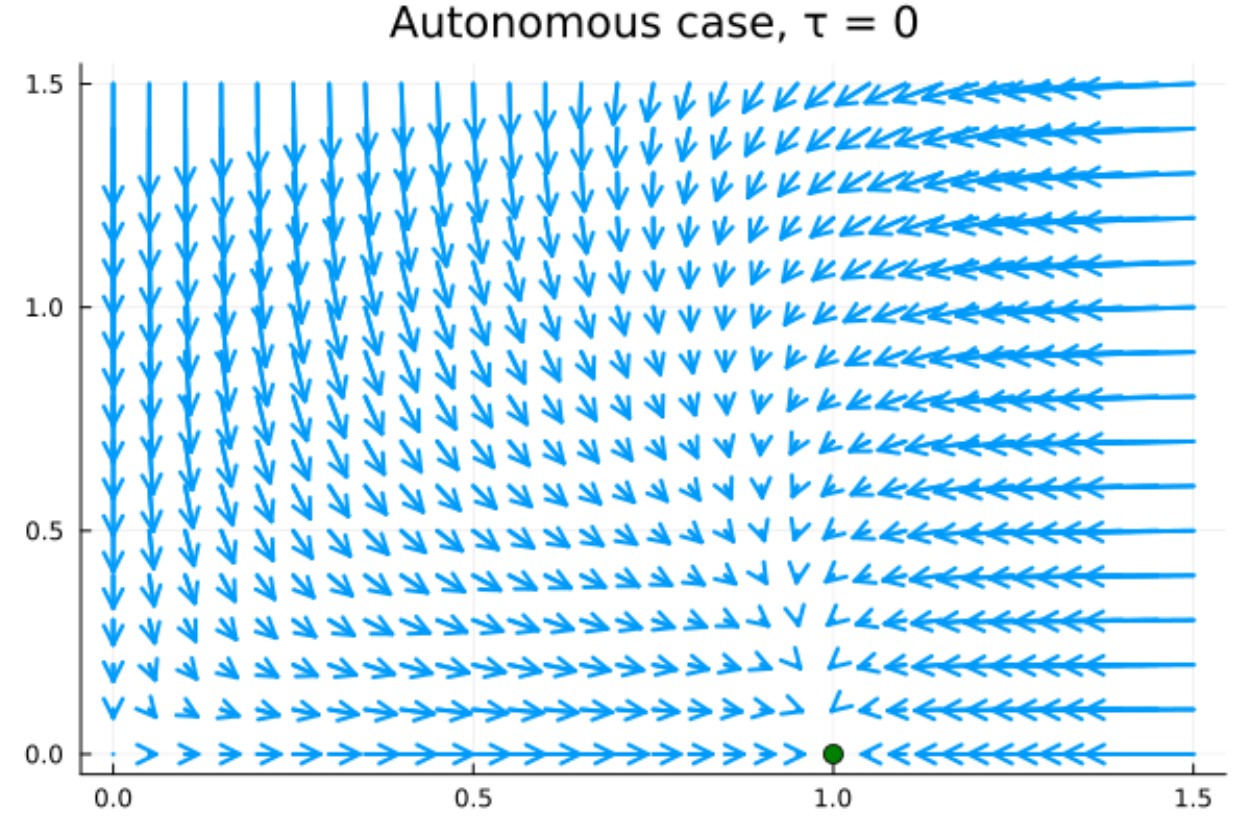}
    \caption{Direction field for $\tau=0$ and asymptotically stable criminal-free equilibrium (parameters same as Fig. \ref{fig auton eq criminal-free gamma and le sensitivity}.)} 
\end{subfigure}
    \hfill
\begin{subfigure}{0.49\textwidth}
    \centering
    \includegraphics[width=\textwidth]{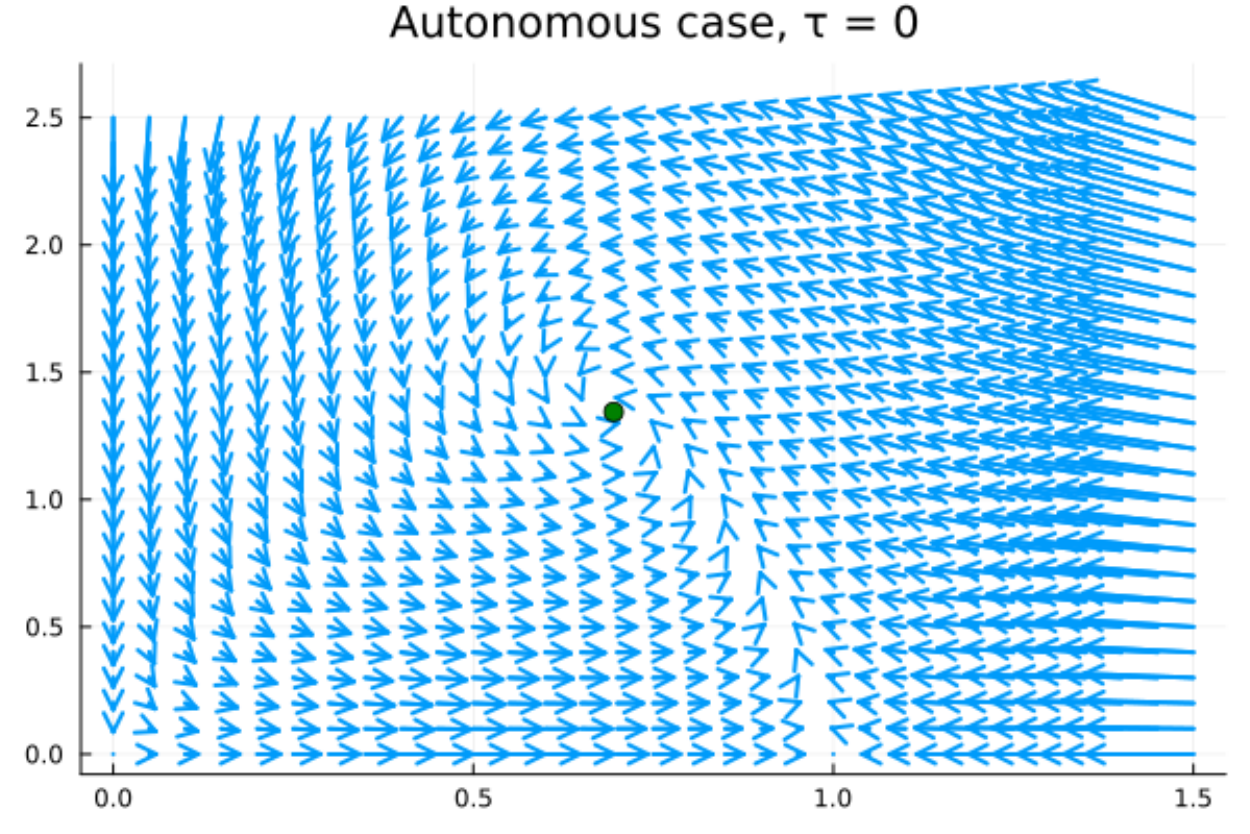}
    \caption{Direction field for $\tau=0$ and asymptotically stable coexistence equilibrium (parameters same as Fig. \ref{fig auton tau critico eqCoex}.)}
\end{subfigure}
\caption{Direction fields of the autonomous system \eqref{autonomous main eq}  for the non-delayed case.}
\label{fig auton non-delay direction field}
\end{figure}

The trivial equilibrium is not considered numerically, since Section 3.1 shows that it is unstable for every $\tau \geq 0$. Moreover, it has limited relevance to the population dynamics considered here, as both the criminal and non-criminal populations vanish at this state.


\subsection{Comparison with a mass-action interaction term}

To illustrate the dynamical effect of the saturation mechanism incorporated into system \eqref{main eq 1}, we compare the Holling type II functional response with a standard mass-action formulation. For this purpose, consider the alternative autonomous system
\[
\begin{split}
\dot{N}(t)&=
N(t)f(N(t))-\beta N(t)C(t)+\sigma N(t)C(t),\\
\dot{C}(t)&=
-\eta C(t)-l_eC(t)
+\alpha N(t-\tau)C(t-\tau),
\end{split} \qquad (M_{A})
\]
where the Holling type II interaction terms in \eqref{main eq 1} are replaced by their bilinear mass-action counterparts. To obtain a meaningful comparison, the coefficients $\beta$ and $\alpha$ are chosen so that the corresponding interaction rates coincide at the criminal-free population level $N=N_{\dagger}$, where $f(N_{\dagger})=0$. Thus, $g_{\varphi}(N_{\dagger})=\beta N_{\dagger}$ and $g_{\gamma}(N_\dagger)=\alpha N_{\dagger}$ imply
\[
\beta=\frac{\varphi}{\nu+N_{\dagger}},
\quad \text{and}\quad
\alpha=\frac{\gamma}{\nu+N_{\dagger}},
\]
respectively. For these values, both formulations have the same interaction rates at $N=N_{\dagger}$. However, the two formulations differ as the non-criminal population increases. In the mass-action formulation, the interaction rates per criminal individual increase linearly with $N$, whereas the Holling type II formulation incorporates saturation and keeps these rates bounded.
\noindent
For the numerical comparison, we take as before
\[
f(N)=1-N, \quad \varphi=1, \quad \nu= 0.9, \quad \sigma=0.4 \quad \text{and} \quad \gamma=1.4,
\]
together with 
\[
\eta=0.1, \quad l_e=0.51 \quad \text{and} \quad \tau=2.
\]
For these values, the calibrated mass-action coefficients are given by
\[
\beta=10/19 \quad \text{and} \quad \alpha=14/19.
\]
For the simulation, both systems are solved using the same constant initial condition
\[
(N_0(t),C_0(t))=(3,1), \quad \forall t\in [-\tau,0].
\]
Since $N_0>1=N_{\dagger}$, the initial dynamics take place in a regime where the difference between the linear and saturating interaction dynamics is already significant.
\begin{figure}[t]
\centering
\includegraphics[width=0.9\textwidth]{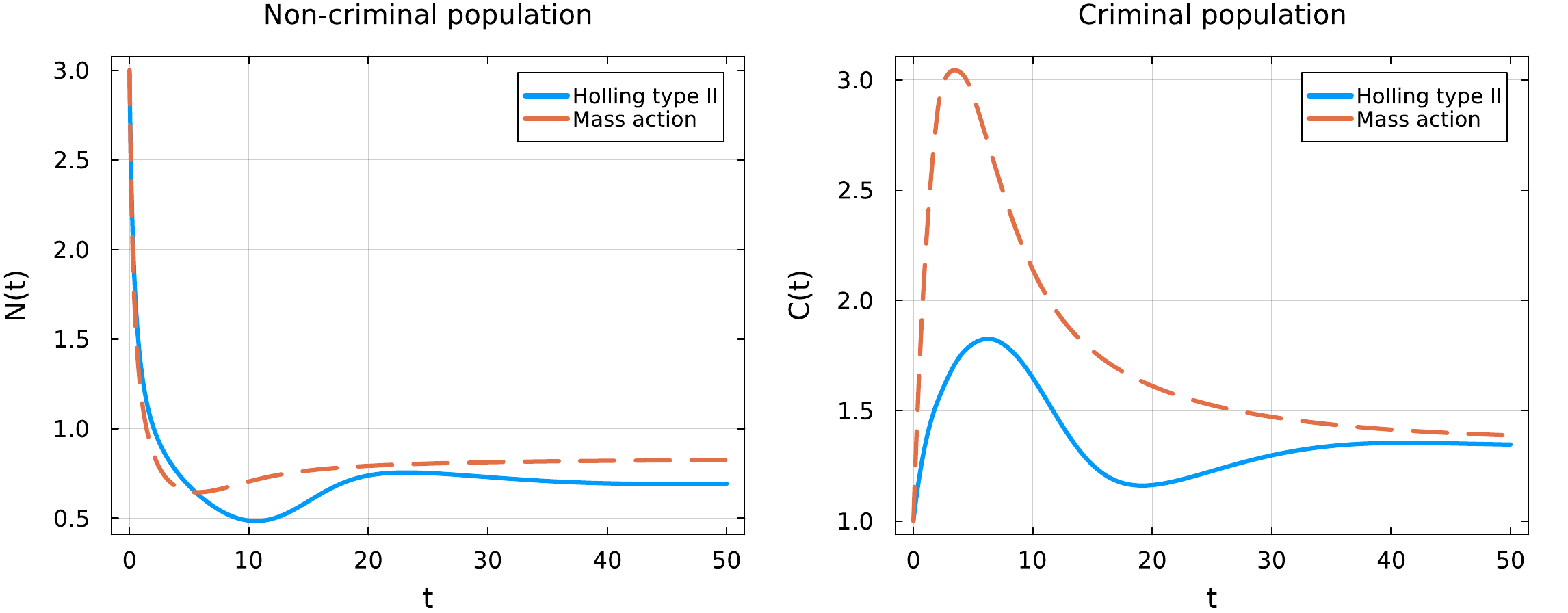}
\caption{Comparison between the Holling type II and mass-action formulations for $f(N)=1-N, \, \varphi=1, \, \nu= 0.9,\, \sigma=0.4, \,\gamma=1.4,\,\eta=0.1, \, l_e=0.51, \tau=2,$ with constant initial history $(N_0(t),C_0(t))=(3,1)$ for all $t\in [-\tau,0]$. Solid and dashed curves correspond to the Holling type II and mass-action formulations, respectively.}
\label{fig comparison holling vs mass-action}
\end{figure}

\noindent
Figure \ref{fig comparison holling vs mass-action} compares the resulting trajectories. The effect of the interaction mechanism is particularly evident in the criminal population. Although the two interaction functions are calibrated to coincide at $N=N_{\dagger}$, for $N>N_{\dagger}$ the mass-action criminalization rate satisfies
\[
\alpha N>\frac{\gamma N}{\nu+N}.
\]
Consequently, the mass-action model produces a substantially larger transient increase in $C(t)$ whereas the Holling type II formulation limits the magnitude of this response through saturation. This difference is reflected in the numerical trajectories, with the mass-action formulation producing a markedly higher transient peak in the criminal population.
\noindent
Overall, this comparison highlights the modeling role of the Holling type II functional response. By imposing an upper bound on the interaction rate per criminal individual, the saturation mechanism prevents interaction intensity from increasing indefinitely with the size of the non-criminal population. The numerical results illustrate how this structural difference can substantially affect the predicted dynamics, particularly during transient regimes in which $N$ lies above the reference level $N_{\dagger}.$

\subsection{Numerical simulations in the non-autonomous case.} 

We present numerical simulations of the non-autonomous system~(1) for the 
non-delayed case $\tau=0$ and the delayed case $\tau=2$; see Figure \ref{fig non-auton eq Per}.
In both cases, the numerical trajectories approach a periodic regime. 
However, the delay modifies both the transient and asymptotic dynamics: 
the limiting periodic responses differ, convergence to the asymptotic regime 
is slower in the delayed case, and additional harmonic components appear in 
the numerical solution. The latter effect also depends on the temporal 
structure of the periodic law-enforcement function $l_e(t)$.

We present two simulations of system \eqref{main eq 1}: non-delay and delay cases in Figure \ref{fig non-auton eq Per}. In both situations, we observe the asymptotic behavior towards a periodic solution. It is interesting to note some differences between the cases with and without delay: the limiting periodic solution is not the same; the delay makes the asymptotic behavior occur in a longer time; and finally that the delay also has the impact of making more harmonics appear in the limit solution (note that in the case without delay the limiting $N(t)$ and $C(t)$ are simple harmonics), although this also depends on the $l_e(t)$ periodic function of law enforcement.

\subsection{Comparison with a mass-action interaction term in the non-autonomous case}
We now extend the comparison to the non-autonomous setting by considering the periodic law-enforcement function $l_e(t)=0.2 \sin(t/4)+0.5$ whose fundamental period is $T=8\pi$. All remaining parameter values are kept unchanged from the previous comparison, and the same calibration $\beta=10/19$ and $\alpha=14/19$ is used for the mass-action formulation. Now, both systems are simulated using the constant initial history
\[
(N_0(t),C_0(t))=(1.5,1.5), \quad \forall t\in [-\tau,0].
\]
with $\tau=2.$ Figure \ref{fig comparison holling vs mass-action non-autonomous} shows the long-term dynamics of both formulations over the interval $t\in[300,500]$, after the initial transient and illustrates their asymptotic behavior. In contrast with the autonomous case, the periodic forcing generates persistent oscillations in both formulations. However, the resulting asymptotic responses exhibit markedly different structures. The mass-action model approaches a periodic regime synchronized with the external forcing, with period $T=8\pi$. The Holling type II model, on the other hand, numerically approaches a subharmonic response of period $2T=16\pi$. In particular, consecutive oscillations alternate between two distinct profiles, so that the solution approximately repeats itself only after two periods of $l_{e}(t)$. To verify this behavior numerically, let
\[
X_{H}(t)=(N_{H}(t),C_{H}(t)) \quad \text{and} \quad X_{M_A}(t)=(N_{M_A}(t),C_{M_A}(t)),
\] 
denote the solutions corresponding to the Holling type II formulation \eqref{main eq 1} and mass-action formulation $(M_A)$, respectively. Over the late-time interval $I=[350,449]$ we compute
\[
\Delta_{H,s}:=\max_{t\in I}\|X_{H}(t+sT)-X_{H}(t)\| \quad \text{and} \quad \Delta_{M_A,s}:=\max_{t \in I}\|X_{M_A}(t+sT)-X_{M_A}(t)\|,
\]
for $s=1,2.$ The numerical results are:
\[
\Delta_{H,1}\approx 9.07 \times 10^{-1} \quad \text{and} \quad \Delta_{H,2}\approx 1.87 \times 10^{-5}.
\]
Thus, the solution $X_{H}(t)$ does not repeat after one forcing period, whereas the discrepancy after two periods is several orders of magnitude smaller.
For the mass-action model, we obtain
\[
\Delta_{M_A,1}\approx 3.03 \times 10^{-6} \quad \text{and} \quad \Delta_{M_A,2}\approx 2.61 \times 10^{-6},
\]
showing that the solution already repeats, up to numerical accuracy, after a single forcing period. The mass-action response is therefore numerically consistent with a $T$-periodic asymptotic regime. These simulations illustrate that the choice of the interaction mechanism can influence not only the magnitude of the population response, but also the structure of the resulting long-term dynamics under periodic forcing. For the parameter values considered here, the Holling type II formulation leads to a subharmonic response with period $2T$, whereas the mass-action model approaches a $T$-periodic regime synchronized with the external forcing. Hence, replacing the saturating interaction by its bilinear counterpart may qualitatively alter the asymptotic response of the system, even when both formulations are calibrated to coincide at the same reference population level.

\begin{figure}[ht]
\centering
\begin{subfigure}{0.43\textwidth}
    \centering
    \includegraphics[width=\textwidth]{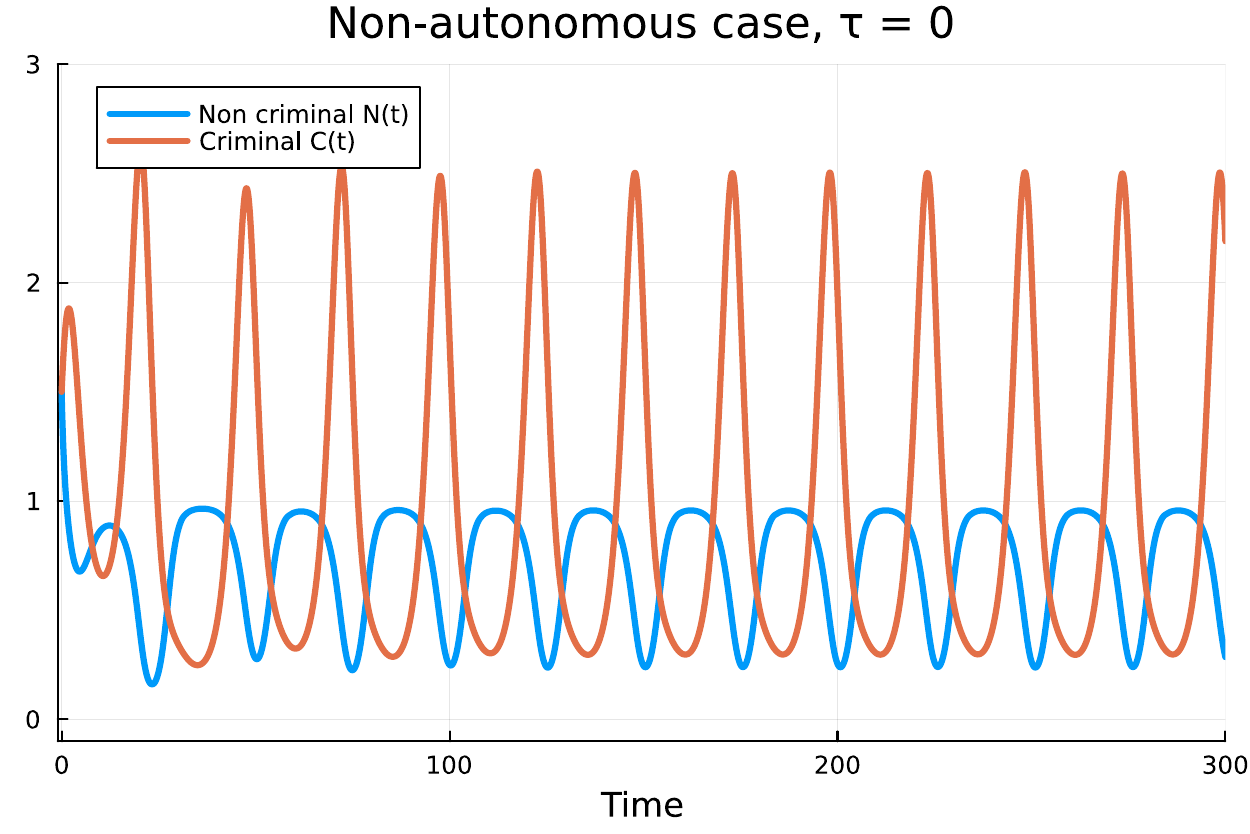}
    \caption{time vs $N$ and $C$, $\tau=0$.}
\end{subfigure}
    \hfill
\begin{subfigure}{0.43\textwidth}
    \centering
    \includegraphics[width=\textwidth]{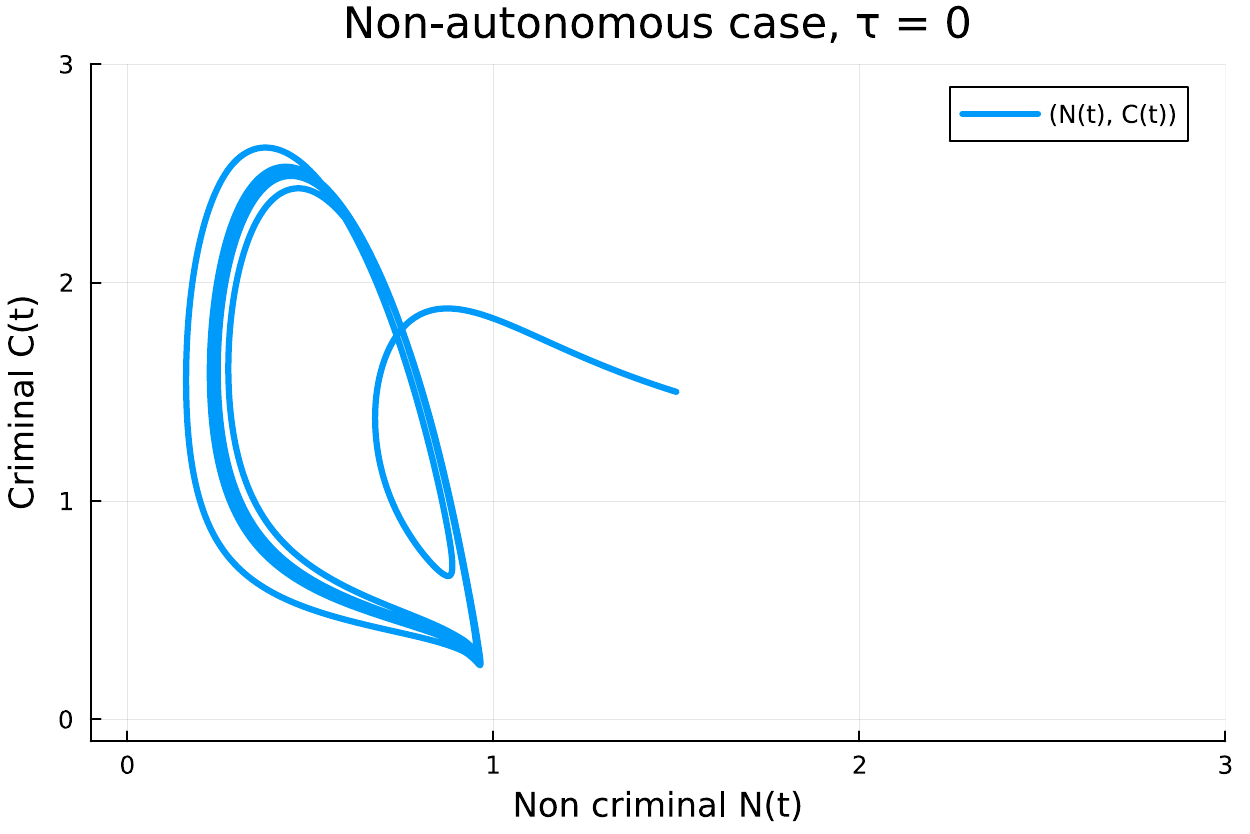}
    \caption{$N$ vs $C$, $\tau=0$.}\label{}
\end{subfigure}
    \hfill\\[2ex]
\begin{subfigure}{0.43\textwidth}
    \centering
    \includegraphics[width=\textwidth]{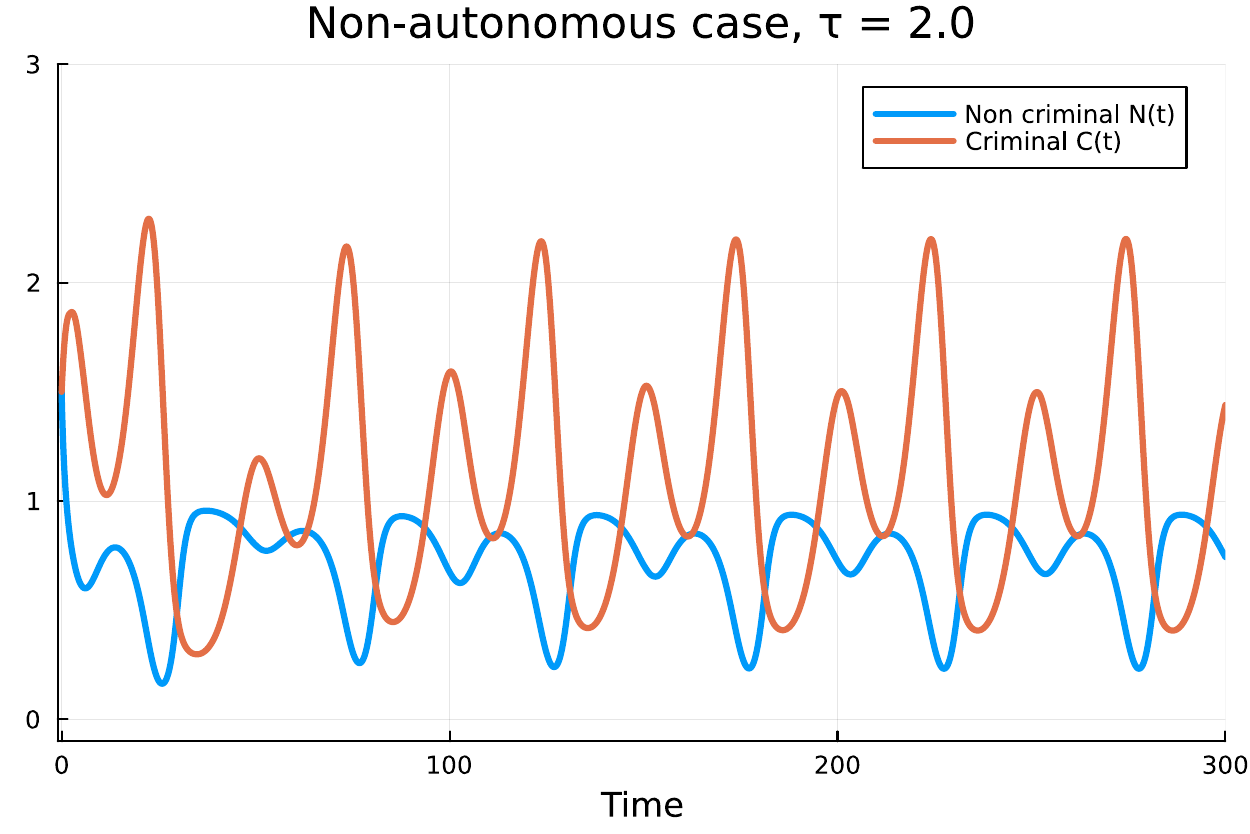}
    \caption{time vs $N$ and $C$, $\tau=2$.}\label{}
\end{subfigure}
    \hfill
\begin{subfigure}{0.45\textwidth}
    \centering
    \includegraphics[width=\textwidth]{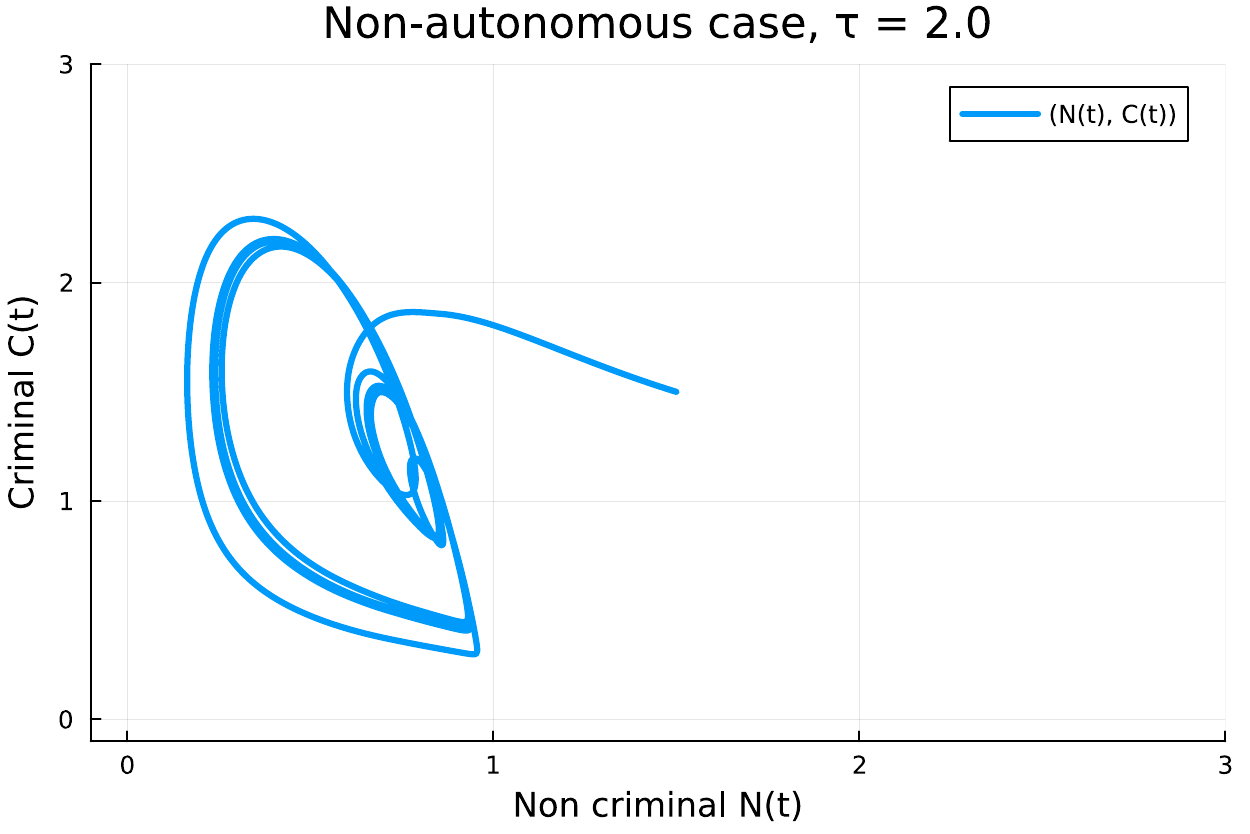}
    \caption{$N$ vs $C$, $\tau=2$.}\label{}
\end{subfigure}
\caption{Simulation of the non-autonomous system \eqref{main eq 1}  when $\tau=0$ and $\tau=2$, with $l_{e}(t)=0.2\sin(t/4)+0.5$ and initial condition $(N_0(t), C_0(t))=(1.5, 1.5), -\tau\leq t\leq 0$ (with the remaining parameter values as in Figure \ref{fig auton eq criminal-free gamma and le sensitivity}). We observe the asymptotic behavior toward a periodic solution.}
\label{fig non-auton eq Per}
\end{figure}

\begin{figure}[ht]
\centering
\includegraphics[width=0.9\textwidth]{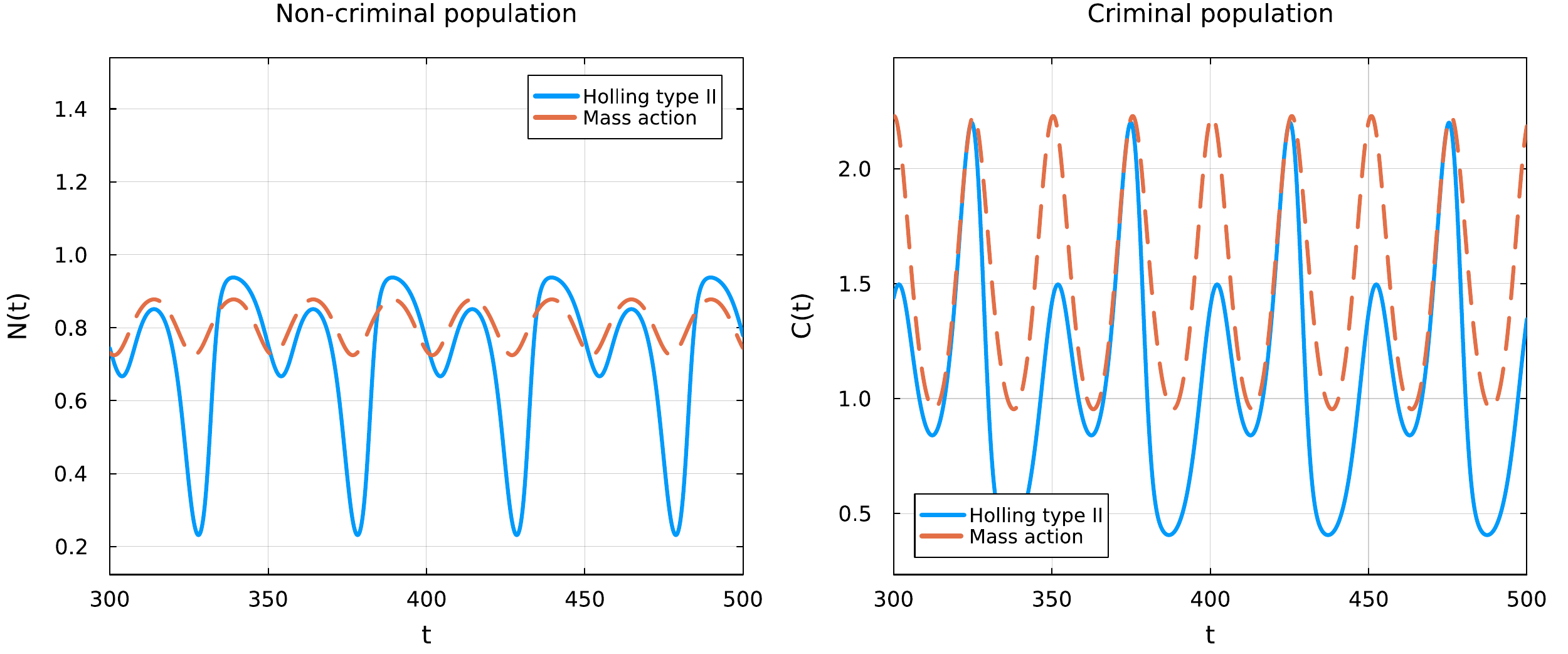}
\caption{Long-term comparison between the Holling type II and mass-action formulations under periodic law enforcement $l_e(t)=0.2\sin(t/4)+0.5$, with forcing period $T=8\pi$, $\tau=2$, and initial condition $(N_0(t), C_0(t))=(1.5, 1.5), -\tau\leq t\leq 0$.  The trajectories are shown over $t\in[300,500]$, after the initial transient. The mass-action formulation $(M_A)$ approaches a $T$-periodic response, whereas the Holling type II formulation \eqref{main eq 1} exhibits a numerically observed subharmonic response of period $2T$.}
\label{fig comparison holling vs mass-action non-autonomous}
\end{figure}

\section*{Concluding Remarks and Future Perspectives}
A nonlinear time-delay model has been formulated and analyzed to describe the interactions between criminal and non-criminal populations, incorporating social influence, saturation effects represented by a Holling type II functional response, and time-dependent law enforcement. The inclusion of a discrete time delay accounts for the latency between social exposure and subsequent behavioral change, thereby introducing memory effects as an intrinsic feature of the system.

For the autonomous model, explicit threshold conditions have been established that characterize the persistence or eradication of the criminal population. These conditions identify the balance between recruitment mechanisms and law-enforcement effects that determines the stability of the criminal-free equilibrium. In addition, the analysis demonstrates that the time delay may significantly alter the stability properties of the system, leading to stability switches and oscillatory dynamics. Thus, the delay constitutes not merely a modeling refinement, but a relevant mechanism capable of generating qualitative changes in the long-term behavior of the system.

For the non-autonomous model with periodic law enforcement, the analysis establishes the existence of strictly positive periodic solutions under appropriate conditions involving the average effectiveness of the enforcement mechanism. This result demonstrates that periodic variations in law enforcement may coexist with persistent recurrent dynamics of the criminal and non-criminal populations. In particular, the existence of such periodic regimes is governed by conditions involving the average enforcement effectiveness, while the temporal structure of the forcing contributes to the resulting dynamical behavior. These findings provide a mathematical basis for assessing the role of sustained and periodically varying intervention strategies within the proposed framework.

Despite these results, several limitations should be acknowledged. The model assumes homogeneous mixing between the populations, deterministic interactions, and a single discrete time delay. These assumptions may not fully capture the heterogeneity of individual behavioral responses, spatial effects, or the multiple temporal scales that may characterize the evolution of criminal behavior. Several extensions are therefore possible, including the incorporation of distributed or state-dependent delays, spatially structured or network-based interactions, stochastic effects, and control-theoretic approaches for the analysis and optimization of law-enforcement strategies.

Overall, the results emphasize the fundamental role of memory and temporal variability in determining the qualitative dynamics of nonlinear social systems. The analysis provides a unified framework in which threshold behavior, delay-induced stability changes, oscillatory dynamics, and periodic forcing can be studied within a common mathematical model. These findings contribute to the broader analysis of nonlinear delay differential equations with time-dependent coefficients and provide a mathematical foundation for further investigations of recurrent dynamics in socially driven population models.

\appendix{}
\section{Characteristic equation and (local) stability}
Consider the following linear system of delay differential equations
\begin{equation}\label{appendix linear system with one delay}
\dot{Z}(t) = AZ(t) + BZ(t-\tau),
\end{equation}
with  $A,B \in \mathbb{M}_{n\times n}$ constant real matrices, $Z=Z(t)\in \mathbb{R}^{n}$ and $\tau \geq 0$. We seek non-trivial exponential solutions of \eqref{appendix linear system with one delay}, say $Z(t)=\text{e}^{\lambda t}v$ with $\lambda\in\C$ and $v\in\C^n$, $v\neq 0$. Then equation \eqref{appendix linear system with one delay} becomes
\begin{equation*}\label{appendix linear system charact eq deduction}
\lambda \text{e}^{\lambda t}v = A\text{e}^{\lambda t}v + B\text{e}^{\lambda (t-\tau)}v \quad \Leftrightarrow \quad 
\lambda v = Av + B\text{e}^{-\lambda \tau}v,
\end{equation*}
which is equivalent to
\begin{equation}\label{appendix linear system charact eq deduction}
\big(\lambda I - A - \text{e}^{-\lambda \tau}B\big)v = 0.
\end{equation}
Equation \eqref{appendix linear system charact eq deduction} is equivalent to
\begin{equation}\label{appendix linear system charact eq}
\begin{split}
\det\big(\lambda I - A - \text{e}^{-\lambda \tau}B\big) &= 0.
\end{split}
\end{equation}
We refer to equation \eqref{appendix linear system charact eq} as the \textit{characteristic equation} of the delay differential equation \eqref{appendix linear system with one delay}, and solutions of equation \eqref{appendix linear system charact eq} as \textit{characteristic roots}. For the special cases of dimension $n=2$, and dimension $n=2$ plus $B$ singular, by a direct computation equation \eqref{appendix linear system charact eq} becomes respectively
\begin{equation}\label{appendix linear system charact eq special cases}
\begin{split}
\lambda^2 - tr(A)\lambda + \det(A) + \text{e}^{-2\lambda\tau}\det(B) + \text{e}^{-\lambda\tau}\left[ C - \lambda tr(B) \right] &= 0,\\
\lambda^2 - tr(A)\lambda + \det(A) + \text{e}^{-\lambda\tau}\left[ C - \lambda tr(B) \right] &= 0,
\end{split}
\end{equation}
where $C = \det(a^1|b^2)+\det (b^1|a^2)$, and $a^i$ denotes the $i$-th  column  of $A$ (and the same for $b^i$). Thus, the  following lemma was proven:
\begin{lemma}
    For the following linear system of delay differential equations
\begin{equation}\label{appendix lemma linear system with one delay}
\dot{Z}(t) = AZ(t) + BZ(t-\tau),
\end{equation}
with  $A,B \in \mathbb{M}_{2\times 2}$ constant real matrices, $B$ singular, $Z=Z(t)\in \mathbb{R}^{2}$ and $\tau \geq 0$, the corresponding characteristic equation is
\begin{equation}\label{appendix lemma linear system charact eq}
\lambda^2 - tr(A)\lambda + \det(A) + \text{e}^{-\lambda\tau}\left[ C - \lambda tr(B) \right] = 0,
\end{equation}
where $C = \det(a^1|b^2)+\det (b^1|a^2)$, and $a^i$ denotes the $i$-th column of $A$ (same for $b^i$).
\end{lemma}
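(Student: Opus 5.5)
The plan is to expand $\det(\lambda I - A - e^{-\lambda\tau}B)$ by hand for $2\times 2$ matrices, using multilinearity of the determinant in columns. Write $M(\lambda) = \lambda I - A - e^{-\lambda\tau}B$. Its $j$-th column is $\lambda e_j - a^j - e^{-\lambda\tau} b^j$, where $e_j$ is the standard basis vector.

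Since the determinant is bilinear in the two columns, $\det M(\lambda)$ splits into nine terms $\det(x^1|y^2)$ with $x^1\in\{\lambda e_1,-a^1,-e^{-\lambda\tau}b^1\}$ and $y^2\in\{\lambda e_2,-a^2,-e^{-\lambda\tau}b^2\}$. These collect as follows.
\begin{itemize}
\item[1.] The $\lambda^2$ term is $\lambda^2\det(e_1|e_2)=\lambda^2$.
\item[2.] The terms linear in $\lambda$ without the exponential are $-\lambda[\det(e_1|a^2)+\det(a^1|e_2)]$. Here $\det(e_1|a^2)=a_{22}$ and $\det(a^1|e_2)=a_{11}$, so these give $-\lambda\,\mathrm{tr}(A)$.
\item[3.] The term $\det(-a^1|-a^2)$ equals $\det A$.
\item[4.] The terms with $\lambda e^{-\lambda\tau}$ give $-\lambda e^{-\lambda\tau}\,\mathrm{tr}(B)$, by the same reasoning as item 2.
\item[5.] The mixed terms $\det(-a^1|-e^{-\lambda\tau}b^2)+\det(-e^{-\lambda\tau}b^1|-a^2)$ equal $e^{-\lambda\tau}[\det(a^1|b^2)+\det(b^1|a^2)]=e^{-\lambda\tau}C$.
\item[6.] The last term is $\det(-e^{-\lambda\tau}b^1|-e^{-\lambda\tau}b^2)=e^{-2\lambda\tau}\det B$.
\end{itemize}
Summing these gives the first identity in \eqref{appendix linear system charact eq special cases}.

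Next I would note that if $B$ is singular, then $\det B=0$ and the $e^{-2\lambda\tau}$ term drops. This yields \eqref{appendix lemma linear system charact eq}.

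Finally, I need the equivalence between exponential solutions and roots of this equation. A nonzero $v$ with $M(\lambda)v=0$ exists iff $\det M(\lambda)=0$, which is the standard fact already recorded in \eqref{appendix linear system charact eq deduction}.

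There is no real obstacle. The only care needed is the sign bookkeeping in the mixed terms: each contributes $(-1)^2$, so the sign is positive. One must also avoid miscounting the trace contributions in items 2 and 4.
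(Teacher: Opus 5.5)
Your proof is correct and follows the same route as the paper. The paper likewise reduces to $\det(\lambda I - A - e^{-\lambda\tau}B)=0$ via exponential solutions and obtains the stated form by expanding the $2\times 2$ determinant, with the $e^{-2\lambda\tau}\det B$ term vanishing when $B$ is singular; your column-by-column bilinear expansion just writes out the ``direct computation'' that the paper leaves to the reader.
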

The stability of the trivial solution of system \eqref{appendix linear system with one delay} and the characteristic roots of equation \eqref{appendix linear system charact eq} are related by the following result (Theorem 4.3 in \cite{smith})
\begin{prop}\label{appendix prop stability characteristic roots}
    Suppose that $\text{Re} (\lambda) < \mu$ for every characteristic root $\lambda$. Then there exists $K>0$ such that
    \begin{equation}\label{appendix stability bound}
        |Z(t,\phi)| \leq K\text{e}^{\mu t} |\phi| \qquad t\geq 0,
    \end{equation}
    where $Z(t,\phi)$ is the solution of \eqref{appendix linear system with one delay} satisfying the initial condition $Z_0 = \phi \in C=C([-\tau,0], \R^n)$, where $C$ is endowed with the supreme norm.
\end{prop}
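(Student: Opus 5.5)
The statement to prove is Proposition \ref{appendix prop stability characteristic roots}: if every characteristic root satisfies $\text{Re}(\lambda)<\mu$, then solutions of the linear system \eqref{appendix linear system with one delay} decay at least like $\text{e}^{\mu t}$, uniformly in the initial data. I would prove it with the Laplace transform, following the standard route in \cite{smith}.

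\textbf{Step 1: the solution formula.} Let $Z(t)=Z(t,\phi)$ with $Z_0=\phi$. Variation of constants gives exponential a priori growth $|Z(t)|\le \text{e}^{(\|A\|+\|B\|)t}|\phi|$, so the Laplace transform $\hat Z(\lambda)$ exists for $\text{Re}\,\lambda$ large. Set $\Delta(\lambda)=\lambda I-A-\text{e}^{-\lambda\tau}B$. Transforming \eqref{appendix linear system with one delay} yields
\[
\Delta(\lambda)\hat Z(\lambda)=\phi(0)+B\,\text{e}^{-\lambda\tau}\int_{-\tau}^{0}\text{e}^{-\lambda s}\phi(s)\,ds=:\Psi(\lambda).
\]
Hence $\hat Z=\Delta^{-1}\Psi$. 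Here $\Psi$ is entire, bounded on every right half-plane, and satisfies $|\Psi(\lambda)|\le c\,|\phi|$ there.

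\textbf{Step 2: moving the inversion contour.} By Cramer's rule, $\Delta(\lambda)^{-1}=\text{adj}\,\Delta(\lambda)/\det\Delta(\lambda)$. Its poles are exactly the characteristic roots of \eqref{appendix linear system charact eq}. There are finitely many roots in any vertical strip $\{\alpha\le\text{Re}\,\lambda\le\beta\}$, since $\det\Delta(\lambda)=\lambda^n(1+O(1/|\lambda|))$ uniformly there. Because all roots satisfy $\text{Re}<\mu$, we can pick $\mu'<\mu$ with every root having real part $<\mu'$; this uses that the roots accumulate only to the left, and that finitely many roots lie in $\mu'\le\text{Re}\le\gamma$.

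\textbf{Step 3: splitting off the $1/\lambda$ term.} Write
\[
\Delta(\lambda)^{-1}=\frac{1}{\lambda}I+R(\lambda),\qquad R(\lambda)=O(|\lambda|^{-2})
\]
on the closed half-plane $\text{Re}\,\lambda\ge\mu'$. Then:
\begin{itemize}
\item The piece $\frac1\lambda\Psi(\lambda)$ inverts explicitly: it corresponds to $\phi(0)+\int_0^t B\tilde\phi(s-\tau)\,ds$, where $\tilde\phi$ is $\phi$ on $[-\tau,0]$ and $0$ afterwards. This is bounded by $c|\phi|$ and is constant for $t\ge\tau$.
\item To keep the estimate clean, I would instead apply the argument to the fundamental matrix $X(t)$, the solution with $X(0)=I$ and $X=0$ on $[-\tau,0)$. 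Its transform is $\Delta^{-1}$.
\item Shifting the Bromwich contour from $\text{Re}\,\lambda=\gamma$ to $\text{Re}\,\lambda=\mu'$ for the $R$ part is justified because the integrand is $O(|\lambda|^{-2})$ on horizontal segments. This gives $|X(t)-\text{(explicit part)}|\le C\text{e}^{\mu' t}$.
\item A cleaner alternative is to integrate $\text{e}^{\lambda t}\Delta^{-1}(\lambda)$ along $\text{Re}=\mu'$ directly, using the Cauchy principal value for the $1/\lambda$ part, which is legitimate since $\mu'$ can be taken $\neq 0$ appropriately. Either way one gets $|X(t)|\le K_1\text{e}^{\mu t}$.
\end{itemize}

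\textbf{Step 4: back to general initial data.} The variation-of-constants formula
\[
Z(t,\phi)=X(t)\phi(0)+\int_{-\tau}^{0}X(t-s-\tau)B\phi(s)\,ds
\]
then gives $|Z(t,\phi)|\le K\text{e}^{\mu t}|\phi|$, with $K$ absorbing $K_1$, $\|B\|$, $\tau$ and $\text{e}^{|\mu|\tau}$.

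\textbf{Main obstacle.} I expect the main difficulty to be Step 3. One has to justify the contour shift carefully, bounding $\Delta^{-1}$ uniformly on the strip $\mu'\le\text{Re}\,\lambda\le\gamma$ for large $|\text{Im}\,\lambda|$, and handle the conditionally convergent $1/\lambda$ term, which is why subtracting $\frac1\lambda I$ first is the safest approach.
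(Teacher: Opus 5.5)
Your overall strategy (Laplace transform of the fundamental matrix, shifting the inversion contour past all characteristic roots, then the variation-of-constants formula of Step 4) is the standard proof of Theorem 4.3 in \cite{smith}. That is the proof the paper relies on; it simply cites it. The version in your last bullet of Step 3 is a complete argument. For $\mu'\neq 0$ chosen strictly between the largest real part of a characteristic root and $\mu$, the splitting $\Delta(\lambda)^{-1}=\lambda^{-1}I+R(\lambda)$ is used only \emph{on} the line $\text{Re}\,\lambda=\mu'$, where $R$ is continuous and $O(|\lambda|^{-2})$. For $t>0$, the principal value of $\frac{1}{2\pi i}\int_{\text{Re}\,\lambda=\mu'}\text{e}^{\lambda t}\lambda^{-1}\,d\lambda$ equals $1$ if $\mu'>0$ and $0$ if $\mu'<0$. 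In both cases $|X(t)|\le K_1\text{e}^{\mu' t}\le K_1\text{e}^{\mu t}$.

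The first version in Step 3 (subtract $\lambda^{-1}\Psi$, or $\lambda^{-1}I$, and then shift the contour only for the $R$-part) is wrong as written. It fails exactly when $\mu'<0$, which is forced in the stability-relevant case $\mu\le 0$. There $0$ is not a characteristic root, so $\Delta^{-1}$ is analytic at $0$, and $R(\lambda)=\Delta(\lambda)^{-1}-\lambda^{-1}I$ has a simple pole at $\lambda=0$ with residue $-I$. Consequently:
\begin{itemize}
\item $R$ is not $O(|\lambda|^{-2})$ on the closed half-plane $\text{Re}\,\lambda\ge\mu'$.
\item Moving the contour for the $R$-part from $\text{Re}\,\lambda=c>0$ to $\text{Re}\,\lambda=\mu'<0$ picks up the residue $-I$, or $-\Psi(0)=-\phi(0)-B\int_{-\tau}^{0}\phi(s)\,ds$ for general data.
\item This residue is precisely what cancels your non-decaying explicit part, which is $I$, or $\phi(0)+B\int_{-\tau}^{0}\phi(s)\,ds$ for $t\ge\tau$.
\end{itemize}
If you omit it, your bound $|X(t)-I|\le C\text{e}^{\mu' t}$ would force $X(t)\to I$, contradicting the decay being proved. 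Moreover, a bounded non-decaying term can never be absorbed into $K\text{e}^{\mu t}|\phi|$ when $\mu<0$. So your claim that ``either way'' one gets $|X(t)|\le K_1\text{e}^{\mu t}$ is not accurate. Either account for the residue at $\lambda=0$ in the subtraction route, or use only the direct route along $\text{Re}\,\lambda=\mu'\neq 0$.
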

\begin{proof}
    See Theorem 4.3 in \cite{smith}.
\end{proof}
\begin{corollary}
    Looking at equation \eqref{appendix stability bound} of Proposition \ref{appendix prop stability characteristic roots}, we have the following direct consequences \begin{itemize}
        \item The trivial solution of system \eqref{appendix linear system with one delay} is asymptotically stable if $Re(\lambda) < 0$ for every characteristic root $\lambda$. (Note that the function defined by equation \eqref{appendix linear system charact eq} is analytic in $\C$. See sec. 4.3 in \cite{smith}.)
        \item The trivial solution of system \eqref{appendix linear system with one delay} is unstable if there is a characteristic root $\lambda$ satisfying $Re(\lambda) > 0$.
    \end{itemize}
\end{corollary}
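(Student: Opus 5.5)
The statement is the Corollary following Proposition \ref{appendix prop stability characteristic roots}. I would prove its two items separately: the first from the exponential bound \eqref{appendix stability bound}, the second by exhibiting explicit growing solutions built from a characteristic root.

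\emph{First item.} Proposition \ref{appendix prop stability characteristic roots} needs a strict bound $\text{Re}(\lambda)<\mu$ with some $\mu<0$. So I must show that $\mu_0:=\sup\{\text{Re}(\lambda): \lambda \text{ a characteristic root}\}$ is attained, hence strictly negative when every root lies in the open left half-plane. This is the main obstacle. Write $\Delta(\lambda):=\det(\lambda I-A-e^{-\lambda\tau}B)=\lambda^n+g(\lambda,\tau)$.
\begin{itemize}
\item On the half-plane $\text{Re}(\lambda)\geq -1$ we have $|e^{-\lambda\tau}|\leq e^{\tau}$. Each term of $g$ has degree at most $n-1$ in $\lambda$, with coefficients bounded by powers of $e^{\tau}$. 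Hence $|\lambda^{-n}g(\lambda,\tau)|\to 0$ as $|\lambda|\to\infty$ in this half-plane; this is the same estimate as the hypothesis $\alpha<1$ of the stability-switch theorem in Section \ref{subsec stability switches}.
\item Consequently $\Delta(\lambda)\neq 0$ for $|\lambda|$ large with $\text{Re}(\lambda)\geq-1$, so all roots in that half-plane lie in a compact set.
\item $\Delta$ is entire and not identically zero, since $\Delta(\lambda)\sim\lambda^n$. Therefore it has only finitely many zeros in that compact set.
\item If all of them have negative real part, their maximum $\mu_0$ is negative (if there are none, take $\mu_0=-1$). Choose $\mu\in(\mu_0,0)$.
\end{itemize}
Proposition \ref{appendix prop stability characteristic roots} then gives $|Z(t,\phi)|\leq Ke^{\mu t}|\phi|\leq K|\phi|$. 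The second inequality yields Lyapunov stability: take $|\phi|<\epsilon/K$. The factor $e^{\mu t}\to0$ yields attractivity. Together these give (in fact exponential, uniform) asymptotic stability of $Z\equiv0$.

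\emph{Second item.} Let $\lambda=\lambda_1+i\lambda_2$ be a root with $\lambda_1>0$, and pick $v\neq0$ with $(\lambda I-A-e^{-\lambda\tau}B)v=0$; this is possible by \eqref{appendix linear system charact eq}. As in the derivation of \eqref{appendix linear system charact eq deduction}, $e^{\lambda t}v$ solves the complex system. Since $A$ and $B$ are real, $W(t):=\text{Re}(e^{\lambda t}v)$ is a real solution, and it is nonzero because $v\neq0$. Writing $v=a+ib$, we get $W(t)=e^{\lambda_1 t}(a\cos\lambda_2 t-b\sin\lambda_2 t)$.
\begin{itemize}
\item The bracket is periodic, or constant if $\lambda_2=0$, and not identically zero. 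Hence it is bounded below by some $c>0$ along a sequence $t_k\to\infty$. If $\lambda_2=0$, $v$ can be chosen real, so $W(t)=e^{\lambda_1 t}v$ directly.
\item Thus $|W(t_k)|\geq c\,e^{\lambda_1 t_k}\to\infty$.
\item For any $\delta>0$, the initial function $\phi=\delta W_0/\|W_0\|$ has norm $\delta$, and by linearity its solution is $\delta W/\|W_0\|$, which is unbounded.
\end{itemize}
Hence no neighbourhood of $0$ keeps solutions bounded, and the trivial solution is unstable.
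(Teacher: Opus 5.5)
Your proof is correct. For the first item you follow what the paper's parenthetical remark on analyticity points to; the paper gives no further argument. Your observation that the roots with $\text{Re}(\lambda)\ge -1$ lie in a compact set, and hence, by analyticity, are finitely many, is exactly what turns ``$\text{Re}(\lambda)<0$ for every root'' into a uniform bound $\text{Re}(\lambda)<\mu<0$, so that Proposition \ref{appendix prop stability characteristic roots} applies.

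For the second item you depart from the paper's wording, which presents instability as a consequence of \eqref{appendix stability bound}. That estimate is only an upper bound and cannot by itself give instability. Your route through the real part of the exponential solution $e^{\lambda t}v$ (the ansatz the appendix uses to derive the characteristic equation) is the argument that actually establishes the claim.

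One small point should be added there: the normalisation $\phi=\delta W_0/\|W_0\|$ requires $W_0\neq 0$. This follows from uniqueness, since a zero initial segment would force $W\equiv 0$ on $[0,\infty)$, contradicting $|W(t_k)|\to\infty$.
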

\begin{remark}
    For the nonlinear system, the corresponding local stability results follow from the linearization principle (see sec. 4.6 in \cite{smith}) together with Proposition \ref{appendix prop stability characteristic roots}.
\end{remark}
\begin{remark}
    Equation \eqref{appendix linear system charact eq} can be written as $f(\lambda, \tau)=\lambda^n+g(\lambda,\tau)=0$ so in general, characteristic roots will change with the delay $\tau$. Thus, a stability switch may occur when a characteristic root crosses the imaginary axis as $\tau$ varies. We refer the reader to section \ref{subsec stability switches} for a more formal discussion of this result.
\end{remark}

\section*{Acknowledgments} The authors gratefully acknowledge the financial support provided by ICETEX through the program “Convocatoria Subvenciones para Proyectos de Internacionalización 2024.”



\bibliographystyle{abbrv}
\bibliography{bibliocrime}
\end{document}